\documentclass[12pts]{amsart}
\usepackage[margin=.8in]{geometry}
\usepackage{tikz}
\usepackage{subcaption}
\usepackage{graphicx, color}
\usepackage{fancyhdr, multicol}
\usepackage{float}
\usepackage{hyperref}
\usepackage{amsmath, amsthm, amsfonts, amssymb}
\usepackage[parfill]{parskip}

\newcommand{\R}{\mathbb{R}}

\DeclareMathOperator{\Sym}{Sym}
\DeclareMathOperator{\tr}{tr}

\newcommand{\Pos}{\mathcal{P}}

\newcommand{\ideal}{\mathcal{I}}

\newcommand{\pard}[2]{\frac{\partial #1}{\partial #2}}
\newcommand{\change}[1]{{#1}}

\author{Grigoriy Blekherman}
\address{School of Mathematics, Georgia Institute of Technology,
686 Cherry Street
Atlanta, GA 30332}\email{greg@math.gatech.edu}

\author{Kevin Shu}
\address{School of Mathematics, Georgia Institute of Technology,
686 Cherry Street
Atlanta, GA 30332}\email{kshu8@gatech.edu}

\title{Sums of Squares and Sparse Semidefinite Programming}
\thanks{The authors were partially supported by NSF grant DMS-1901950.}

\newtheorem{theorem}{Theorem}
\newtheorem*{theorem*}{Theorem}
\newtheorem{corr}[theorem]{Corollary}
\newtheorem{remark}[theorem]{Remark}
\newtheorem{lemma}[theorem]{Lemma}
\newtheorem{example}[theorem]{Example}
\newtheoremstyle{named}{}{}{\itshape}{}{\bfseries}{.}{.5em}{Theorem (#3)}
\theoremstyle{named}

\begin{document}
\begin{abstract}
We consider two seemingly unrelated questions: the relationship between nonnegative polynomials and sums of squares on real varieties, and sparse semidefinite programming. This connection is natural when a real variety $X$ is defined by a quadratic square-free monomial ideal. In this case  nonnegative polynomials and sums of squares on $X$ are also natural objects in positive semidefinite matrix completion. We show quantitative results on approximation of nonnegative polynomials by sums of squares, which leads to applications in sparse semidefinite programming.
\end{abstract}
\maketitle
\section{Introduction}
The relationship between nonnegative polynomials and sums of squares is a cornerstone problem in real algebraic geometry. In a fundamental 1888 paper Hilbert showed that there exist globally nonnegative polynomials that are not sums of squares of polynomials \cite{MR1510517}. He then showed in 1893 that a bivariate globally nonnegative polynomial is a sum of squares of rational functions. Hilbert's 17th problem asked to show that this was true for any number of variables. It was solved by Artin in 1920's using the Artin-Schreier theory of real closed fields \cite{BCR}. In the last twenty years, questions about nonnegative polynomials and sums of squares (of polynomials) gained new prominence due to the development of sums of squares relaxations in optimization \cite{BPT}. Testing whether a nonnegative polynomial is a sum of squares can be done with \textit{semidefinite programming}, and sums of squares relaxations currently achieve best-known results for a variety of optimization problems \cite{barak2015dictionary, hopkins2016fast}. 

Hilbert's theorem on equality has been generalized to the setting of sums of squares on real projective varieties, resulting in a clearer classification of cases \cite{free_resolution,Blekherman2015SumsOS}. Another approach is to ask the more quantitative question of whether the sums of squares polynomials are a `good approximation' of nonnegative polynomials. It was shown in \cite{Blekherman2006} that the normalized volume of an appropriate slice of the cone of nonnegative polynomials is asymptotically larger than that of the sums of squares polynomials if the degree is fixed and the number of variables tends to infinity. In this paper we initiate a quantitative study of sums of squares approximation of nonnegative polynomials on real projective varieties. 

\subsection{Sums of Squares and Matrix Completion}
In our setting, the main algebraic objects are quadratic square-free monomial ideals, and the corresponding varieties are unions of coordinate subspaces. We now describe how these ideals and their associated varieties are related to the \textit{positive semidefinite matrix completion problem} (see \cite{free_resolution} for more on monomial ideals, and \cite{Laurent2009} for a survey on some results in matrix completion).

Given a graph $G$, we consider a square-free monomial ideal $\ideal_G$ whose monomials are non-edges of $G$: $\ideal_G = \langle x_i x_j : \{i,j\} \not \in E(G) \rangle$. The variety $X_G$ defined by $\ideal_G$ is the union of linear subspaces corresponding to maximal cliques of $G$. Under the standard basis, quadratic forms on $X_G$ correspond to the \textit{partially specified symmetric matrices} where the diagonal entries and the off-diagonal entries corresponding to the edges of $G$ are specified, and the rest of the entries are unspecified. 

\begin{example}
Consider the 5-vertex cycle graph $C_5$. We think of the edges as indexing entries in a $5 \times 5$ matrix (see Figure \ref{fig:partial}). We associate to $C_5$ a class of `partially specified matrices', where we think of entries indexed by edges as being given, and the remainder as being unknown. Our goal will typically be to complete these matrices by setting the unknown entries so that the resulting matrix is positive semidefinite (PSD).

\begin{figure}
\centering
\begin{subfigure}{0.5\textwidth}
  \centering
	\begin{tikzpicture}
        \node[circle,fill=black,minimum size=0.25mm](v1) at (18:1) {};
        \node[circle,fill=black,minimum size=0.25mm](v2) at (90:1) {};
        \node[circle,fill=black,minimum size=0.25mm](v3) at (162:1){};
        \node[circle,fill=black,minimum size=0.25mm](v4) at (234:1){};
        \node[circle,fill=black,minimum size=0.25mm](v5) at (306:1){};
        \node[circle,fill=black,minimum size=0.25mm](v6) at (378:1){};
        \draw (v1) -- (v2) -- (v3) -- (v4) -- (v5) -- (v6) -- cycle;
	\end{tikzpicture}
  \caption{The 5-cycle graph $C_5$.}
  \label{fig:sub1}
\end{subfigure}%
\begin{subfigure}{0.5\textwidth}
  \centering
\[
\begin{pmatrix}
1 & 1 & ? & ? & 1\\
1 & 1 & 1 & ? & ?\\
? & 1 & 1 & 1 & ?\\
? & ? & 1 & 1 & 1\\
1 & ? & ? & 1 & 1\\
\end{pmatrix}
\]
  \caption{A partially specified matrix corresponding to $C_5$. $?$ corresponds to an unknown entry.}
  \label{fig:sub2}
\end{subfigure}
  \caption{The partially specified matrix here represents the quadratic form $\sum_i x_i^2 + 2\sum_{\{i,j\}\in E(C_n)}x_ix_j$ on the variety $X_{C_n}$. There are many quadratic forms on $\R^n$ which restrict to the same form on $X_{C_n}$, for instance $\left(\sum_i x_i\right)^2$, which is a sum-of-squares completion of this form.}
\label{fig:partial}
\end{figure}

\end{example}
Nonnegative quadratics on $X_G$ correspond to partially specified matrices where all fully specified submatrices are positive semidefinite. Sum of squares quadratics on $X_G$ correspond to matrices that can be completed to a full positive semidefinite matrix. By a result of Grone et al. in \cite{GRONE1984109}, all nonnegative quadratics on $X_G$ are sum of squares if and only if $G$ is a chordal graph.

We find a class of graphs $\mathcal{G}$ for which we can quantify the extent to which sums of squares provide a good approximation of nonnegative quadratics. The class $\mathcal{G}$ contains chordal and series-parallel graphs and is closed under the operations of taking clique sums and joining with complete graphs (see Section \ref{sec:resultdetail} for a precise description). The quality of approximation of nonnegative polynomials by sums of squares for $G\in \mathcal{G}$ depends only on the size of the smallest induced cycle of size at least 4 in $G$ and the number of vertices of $G$. For $n$-cycles $C_n$ we show that, surprisingly, the quality of approximation improves sharply as $n$ increases, verifying an experimental observation of Drton and Yu \cite{drtonyu}.

A similar class of graphs was studied in \cite{MR1342017} in relation with the positive semidefinite matrix completion problem. A graph is called \emph{cycle completable} if positive semidefiniteness of fully specified submatrices, and completability of each induced cycle to a positive semidefinite submatrix is sufficient for the existence of a full positive semidefinite completion. It was shown in \cite{MR1342017} that cycle completable graphs are precisely the graphs which can be formed by taking clique sums of complete graphs and series parallel graphs. This class of graphs can also be characterized by forbidding the wheel graph and its splittings as subgraphs. Since the wheel graph is a join of a cycle graph and
\change{the}
one vertex graph, the graph class $\mathcal{G}$ is strictly larger than cycle completable graphs. Some computational and graph-theoretic properties of the class $\mathcal{G}$ are discussed in Section \ref{sec:G}.

\subsection{Semidefinite Programming}
Our results have direct application to semidefinite programming. Consider a general semidefinite program of the form
\begin{equation*}
\begin{aligned}
    \text{minimize} &&\langle B^0, X\rangle\\
    \text{such that } &&\tr(X) = 1,\\
                      &&\langle B^1, X\rangle = b_1,\\
                      &&\langle B^2, X\rangle = b_2,\\
                      &&\dots\\
                      &&\langle B^k, X\rangle = b_k,\\
                  &&  X \succeq 0\change{.}
\end{aligned}
\end{equation*}
In some circumstances, not all of the entries of $X$ are needed to define the constraints or the objective, which corresponds to cases when some of the entries of the $B^{\ell}$ are zero. Explicitly, let $G$ be the graph where $\{i,j\} \in E(G)$ if some $B^{\ell}_{i,j} \neq 0$. If $\{i,j\} \not \in E(G)$, then the linear constraints do not depend on the value of $X_{i,j}$, and this sparsity should be exploitable.

It is known that if the graph $G$ is chordal, then it is possible to decompose the semidefinite program into smaller semidefinite programs which can be solved faster, an observation which seems to originate from \cite{FUKUDA1970}.
This idea is extended greatly in \cite{chordal}. Reducing a problem to one with a chordal sparsity pattern may be hard, as the problem of finding the sparsest possible chordal graph containing a given one is NP-hard to approximate \cite{CAO2020104514}.
We extend this idea to the case of possibly \textit{nonchordal} graphs in our class $\mathcal{G}$, by showing that it is possible to decompose a semidefinite program with a sparsity pattern corresponding a graph $G \in \mathcal{G}$, with an error depending only on the smallest induced cycle of $G$ and the number of vertices in $G$.

In quantitative terms, for any graph $G\in \mathcal{G}$, \textit{we can bound the multiplicative error for this decomposition of the semidefinite program} by
{\color{black}$1+O\left(\frac{n}{g^3}\right)$ where $g$}
is the size of the smallest induced cycle in the graph of length at least $4$, assuming some technical conditions.
In particular, these results apply to the MAX-CUT semidefinite program of Goemans and Williamson for graphs in $\mathcal{G}$. \cite{goemans1995improved}

The value of this decomposition is that the smaller semidefinite programs produced are frequently easier to solve, and require fewer variables in total than the original semidefinite program.
The reduction in the number of variables can mean the difference between the program being solvable and being unsolvable in practice, so our results may make a larger class of semidefinite programs practically tractable.

\subsection{Results in Detail}\label{sec:resultdetail}
\subsubsection{Definitions}
Let $G$ be a graph. We can associate to $G$ a quadratic square-free monomial ideal $\ideal_G$ where the monomials in $\ideal_G$ correspond to the non-edges of $G$. The ideal $\ideal_G$ is a radical ideal and the variety $X_G$ defined by $\ideal_G$ is the union of coordinate subspaces which correspond to maximal cliques of $G$. More explicitly, if we let $C_G$ be the set of all maximal cliques of $G$, then
\[
    X_G = \bigcup_{K \in C_G} \text{span}(e_i : i \in K),
\]
where the $e_i$ are coordinate vectors, and $\text{span}$ denotes the linear span of a collection of vectors. We let $R_G=\mathbb{R}[x_1,\dots,x_n]/\ideal_G$ be the graded coordinate ring of $X_G$. 

We let $\mathcal{P}(G)$ be the cone of quadratic forms in $R_2$, (the degree $2$ part of $R_G$), which are nonnegative on $X_G$, and we let $\Sigma(G)$ be the cone of quadratic forms in $R_2$ that are sums of squares of linear forms. Both $\Sigma(G)$ and $\mathcal{P}(G)$ are full-dimensional, convex, pointed cones in $R_2$ \cite[Chapter 4]{BPT}.

For a form $q \in \mathcal{P}(G)$ we define $\varepsilon_G(q)\in \mathbb{R}$ to be the smallest real number such that 
\[
   q(x) + \varepsilon_G(q)  \tr (q) (x_1^2+\dots+x_n^2) \in \Sigma(G) \label{approx}, 
\]
where $\tr(q) = \sum_{i=1}^n q(e_i)$.
If $\varepsilon_G(q)$ is small, then there is a sum of squares polynomial which closely approximates $q(x)$.
We let $\varepsilon(G)$ be the largest value of $\varepsilon_G(q)$ over all $q\in \mathcal{P}(G)$. Observe that $\varepsilon(G)\geq 0$ for any graph $G$ and $\varepsilon(G)=0$ if and only if $G$ is a chordal graph. 
We call $\varepsilon(G)$ the \emph{(trace-normalized) conical distance}  between $\mathcal{P}(G)$ and $\Sigma(G)$.
The quantity $\varepsilon(G)$ appears naturally in several contexts. See Section \ref{sec:condis} for a more detailed discussion and motivation for this definition.

If the conical distance between $\mathcal{P}(G)$ and $\Sigma(G)$ is small, then the above discussion shows that sums of squares quadratics on $X_G$ provide a good approximation of nonnegative quadratics on $X_G$. However, using conical distance as a measure of approximation quality has its limitations. Lemma \ref{lemma:subgraph} shows that if $G$ contains an induced four-cycle $C_4$, then the conical distance between $\mathcal{P}(G)$ and $\Sigma(G)$ will be large (i.e. bounded from below by a constant). 
When the conical distance is large, the results in this paper will not provide strong approximation results for semidefinite programming, though other techniques may still result in useful bounds.

To state our results, we will also require some graph theoretic terminology. A minor of a graph $G$ is any graph which can be obtained from $G$ by iteratively deleting edges, or combining the endpoints of an edge to form a single vertex. A series parallel graph is any graph without the 4-vertex complete graph $K_4$ as a minor. We say that a graph $S$ is the \textbf{clique sum} of graphs $G$ and $H$ if $S$ is obtained from $G$ and $H$ by finding isomorphic cliques in $G$ and $H$ and combining $G$ and $H$ by identifying corresponding vertices contained in the clique. 
We denote the clique sum of $G$ and $H$ by $G \oplus H$.
See Section 3.1 for a precise definition.  As a note, in much of the literature, when taking a clique sum of two graphs, it may be allowed to delete some of the edges contained in the clique. In this paper, we follow the convention in \cite{JOHNSON1996155}, which uses the term \textit{clique identification} where we use clique sum.

The \textbf{cone} over a graph $G$ is obtained by introducing a single new vertex to $G$, and then adding edges from that new vertex to every vertex in $G$. We denote the cone over $G$ by $\hat{G}$. See section 3.2 for a more detailed definition.
 
\subsubsection{Main Results}
Our main result is a class of graphs for which we can compute the conical distance exactly.
Let $\mathcal{G}$ be the smallest collection of graphs satisfying the following three conditions: (1) Series-parallel and chordal graphs are in $\mathcal{G}$ (2) Any clique sum of two graphs in $\mathcal{G}$ is in $\mathcal{G}$ and (3) The cone over any graph in $\mathcal{G}$ is in $\mathcal{G}$. 
{\color{black}
\begin{theorem}[Theorem \ref{thm:main}]
    Let $G \in \mathcal{G}$. Then $$\varepsilon(G) = \frac{1}{g}\left(\frac{1}{\cos(\frac{\pi}{g})} - 1  \right)\change{,}$$ where $g$ is the length of the smallest induced cycle of $G$ of length at least $4$.
\end{theorem}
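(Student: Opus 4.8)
The plan is to prove the formula by induction on the construction of the class $\mathcal{G}$, establishing matching lower and upper bounds for $\varepsilon(G)$ in terms of $g$. Throughout I will work on the matrix-completion side: via the dictionary in the introduction, a form $q\in\mathcal{P}(G)$ is a partial symmetric matrix $M$ (specified on the diagonal and on edges) all of whose clique-submatrices are PSD, $\Sigma(G)$ is the cone of PSD-completable such matrices, and $\varepsilon_G(q)$ is the least $\varepsilon$ for which the diagonal shift $M+\varepsilon\,\tr(q)\,I$ becomes PSD-completable. Since $\varepsilon_G$ is invariant under scaling $q$, I optimize over the compact base $\{q\in\mathcal{P}(G):\tr(q)=1\}$. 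For the lower bound I invoke the induced-subgraph monotonicity of Lemma~\ref{lemma:subgraph}: as $G$ contains an induced $C_g$, extending its extremal form by zeros on the remaining coordinate subspaces gives $\varepsilon(G)\ge \varepsilon(C_g)$. It then suffices to compute $\varepsilon(C_g)$ and to prove the reverse inequality $\varepsilon(G)\le \varepsilon(C_g)$ for $G\in\mathcal{G}$.

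The base case is the cycle, and it is the crux. For $C_n$ I will use the completability criterion obtained by triangulating the $n$-gon with a fan of chords from one vertex: a partial matrix with unit diagonal and edge-angles $\theta_i=\arccos M_{i,i+1}\in[0,\pi]$ is PSD-completable iff the $\theta_i$ satisfy the spherical polygon inequalities $\theta_i\le\sum_{j\ne i}\theta_j$ together with a winding bound, which I phrase as feasibility of an ``angle walk'' $\phi_2,\dots,\phi_n$ with steps $|\phi_{k+1}-\phi_k|\le\theta_k$. I then identify the extremal form: $n-1$ cycle edges equal to $+1$ and one edge equal to $-1$, so that all $2\times2$ blocks are singular and $q\in\partial\mathcal{P}(C_n)$. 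Shifting by $\varepsilon\,\tr=n\varepsilon$ and renormalizing sends the $+1$ edges to angle $\theta=\arccos\frac{1}{1+n\varepsilon}$ and the $-1$ edge to $\pi-\theta$; completability is governed by the tight polygon inequality $\pi-\theta=(n-1)\theta$, i.e. $n\theta=\pi$. Solving $\frac{1}{1+n\varepsilon}=\cos(\pi/n)$ gives $\varepsilon=\frac1n\bigl(\sec(\pi/n)-1\bigr)$, the claimed value.

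The hard part of the base case is the upper bound: that this asymmetric form actually maximizes $\varepsilon_{C_n}$ over all of $\mathcal{P}(C_n)$, and not merely over the finite family of $\pm1$ boundary forms. I expect to prove this by writing $\varepsilon(C_n)$ as a semidefinite program, passing to its dual, and exhibiting a dual certificate supported on the cyclic Fourier mode that produces the cosine; the point is that concentrating all the frustration at a single edge, rather than spreading it, forces the largest shift, and this should emerge from analyzing the optimal dual solution. I would then extend the computation from cycles to the remaining base case of series-parallel graphs, decomposing a $2$-connected series-parallel graph along its tree of cycles so that its smallest induced cycle again controls $\varepsilon$; chordal base graphs contribute $\varepsilon=0$ and are trivially consistent.

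Two lemmas drive the inductive step, each carrying a girth statement and a distance statement. For a clique sum $G\oplus H$ along a clique $K$, any induced cycle of length $\ge4$ meeting both sides would acquire a chord inside $K$, so every such cycle lies in $G$ or in $H$ and $g(G\oplus H)=\min(g(G),g(H))$; and since PSD-completability decomposes across a clique separator, the common shift $\varepsilon=\max(\varepsilon(G),\varepsilon(H))$ makes both pieces completable with completions that glue along $K$, giving $\varepsilon(G\oplus H)=\max(\varepsilon(G),\varepsilon(H))$. For the cone $\hat G$ with universal vertex $v_0$, no induced $\ge4$ cycle can use $v_0$ (it would chord everything), so $g(\hat G)=g(G)$; writing the partial matrix as $\left(\begin{smallmatrix} M_{00} & m^{T}\\ m & M_G\end{smallmatrix}\right)$ with the whole $0$-row specified, a Schur complement in $M_{00}$ reduces completability of the $\hat G$-form to completability of a $G$-form with the same unspecified pattern, from which $\varepsilon(\hat G)=\varepsilon(G)$. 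Combining the base case with these lemmas yields $\varepsilon(G)=\frac1g\bigl(\sec(\pi/g)-1\bigr)$ for all $G\in\mathcal{G}$. Besides the base-case upper bound, I expect the coning lemma to be the main obstacle, since one must verify that the Schur-complement reduction is compatible with the trace normalization and with the shift in the extra variable $x_0$.
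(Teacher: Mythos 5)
Your architecture (monotonicity lower bound, exact computation for the cycle, clique-sum and cone lemmas, then reduction of general $G\in\mathcal{G}$ to these cases) is the same as the paper's, and your girth bookkeeping for clique sums and cones is correct; your cone argument via Schur complement and your gluing of completions along a clique separator are essentially the paper's proofs of Theorems \ref{thm:join} and \ref{thm:csum}. However, the two steps you defer are precisely where the content of the theorem lies, and neither sketch closes the gap. For the cycle upper bound you propose an SDP dual certificate ``supported on the cyclic Fourier mode,'' but this points in the wrong direction: a dual feasible point for the completion SDP certifies a \emph{lower} bound on $\varepsilon_{C_n}(A)$ for a \emph{fixed} $A$ (an obstruction to completability), whereas what you must prove is an upper bound \emph{uniform over all} $A\in\mathcal{P}(C_n)$, including those with non-constant diagonal --- i.e.\ you must exhibit a PSD completion of $A+\varepsilon I$ for every such $A$. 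The paper does this by showing $\varepsilon_{C_n}(\cdot)$ is concave (so it suffices to treat extreme rays), reducing extreme rays to a normal form with singular $2\times 2$ blocks and one negative entry, invoking a rank-$2$ angle-sum completability criterion (Lemma \ref{lemma:simpler_equiv_cond}), and then --- the key analytic step --- proving convexity of $f_{\varepsilon}(x,y)=\arccos\bigl(\sqrt{xy}/(\sqrt{x+\varepsilon}\sqrt{y+\varepsilon})\bigr)$ (Lemma \ref{lemma:convexity}) so that Jensen's inequality plus the intermediate value theorem handles arbitrary diagonals. Your proposal contains no substitute for this step, and restricting attention to unit-diagonal $\pm 1$ forms, as your angle computation implicitly does, is exactly the reduction that needs justification.

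The second gap is the series-parallel case. A $2$-connected series-parallel graph does not in general decompose ``along a tree of cycles'' by clique sums: the theta graph (two nonadjacent vertices joined by three internally disjoint paths of length at least $2$) is $2$-connected, series-parallel, has no clique separator, and is not a cycle, so no clique-sum induction can reduce it to cycles. The paper instead imports a nontrivial external theorem (Theorem \ref{thm:seri_p}, from the cycle-completability literature): for clique sums of series-parallel and chordal graphs, $\Sigma(G)=\mathcal{P}(G)\cap\Gamma(G)$, i.e.\ positivity of all clique blocks together with completability of every induced cycle already implies global completability. It is this theorem that converts the statement ``$\pi_C(A)+\varepsilon(C_g)I\in\Sigma(C)$ for every induced cycle $C$'' (which follows from your cycle computation and the monotonicity of $\varepsilon(C_n)$ in $n$) into ``$A+\varepsilon(C_g)I\in\Sigma(G)$.'' Without citing or reproving such a result, your reduction from series-parallel graphs to cycles is a missing idea, not a routine extension.
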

}
In particular, this gives an exact computation for the conical distance of the wheel graph $W_n=\hat{C}_n$, so the set $\mathcal{G}$ is strictly larger than the cycle-completable graphs.

We prove this result by computing the conical distance in three key cases.

We first show that $\varepsilon(G)$ behaves well under the clique sum and cone operations. These are natural operations to consider because they preserve the class of chordal graphs, which are the only cases where $\varepsilon(G) = 0$. This results in the two theorems:

\begin{theorem*}[Theorem \ref{thm:csum}]
    $\varepsilon(G \oplus H) = \max\{\varepsilon(G), \varepsilon(H)\}$.
\end{theorem*}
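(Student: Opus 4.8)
The plan is to prove the two inequalities separately, working throughout with the matrix-completion description: $\mathcal{P}(G)$ is the cone of partial symmetric matrices all of whose fully specified (clique) principal submatrices are PSD, and $\Sigma(G)$ is the cone of partial matrices admitting a full PSD completion. The combinatorial input is that if $S=G\oplus H$ is the clique sum along the common clique $K=V(G)\cap V(H)$, then $K$ is a \emph{clique separator}: there are no edges of $S$ between $V(G)\setminus K$ and $V(H)\setminus K$, both $G$ and $H$ are induced subgraphs of $S$, and every maximal clique of $S$ lies entirely in $V(G)$ or entirely in $V(H)$. For the lower bound $\varepsilon(G\oplus H)\ge\max\{\varepsilon(G),\varepsilon(H)\}$ I would invoke monotonicity under induced subgraphs (Lemma \ref{lemma:subgraph}): since $G$ and $H$ are induced subgraphs of $G\oplus H$, we get $\varepsilon(G),\varepsilon(H)\le\varepsilon(G\oplus H)$ immediately.

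The substance is the upper bound $\varepsilon(G\oplus H)\le\varepsilon^{*}$, where $\varepsilon^{*}:=\max\{\varepsilon(G),\varepsilon(H)\}$. Fix $q\in\mathcal{P}(G\oplus H)$ and let $q_G$, $q_H$ be its restrictions to the principal submatrices on $V(G)$ and $V(H)$. Since cliques of $G$ and of $H$ are cliques of $G\oplus H$, we have $q_G\in\mathcal{P}(G)$ and $q_H\in\mathcal{P}(H)$; moreover, as the $1\times1$ submatrices of $q$ are PSD, the diagonal entries $q(e_i)$ are nonnegative, so $\tr(q_G)\le\tr(q)$ and $\tr(q_H)\le\tr(q)$. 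Using $\varepsilon^{*}\ge\varepsilon(G)$ together with $\tr(q)\ge\tr(q_G)\ge0$, I can write the partial matrix $q_G+\varepsilon^{*}\tr(q)\sum_{i\in V(G)}x_i^2$ as the sum of $q_G+\varepsilon(G)\tr(q_G)\sum_{i\in V(G)}x_i^2\in\Sigma(G)$ (which is in $\Sigma(G)$ by the definition of $\varepsilon(G)$, adding extra diagonal only helping) and a nonnegative multiple of $\sum_{i\in V(G)}x_i^2$; the latter is a diagonal PSD partial matrix, hence lies in $\Sigma(G)$, and since $\Sigma(G)$ is a convex cone the sum lies in $\Sigma(G)$. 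Thus $q_G+\varepsilon^{*}\tr(q)\sum_{i\in V(G)}x_i^2$ has a PSD completion $M_G$, and symmetrically $q_H+\varepsilon^{*}\tr(q)\sum_{i\in V(H)}x_i^2$ has a PSD completion $M_H$.

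It remains to glue $M_G$ and $M_H$ into a PSD completion of $q+\varepsilon^{*}\tr(q)\sum_i x_i^2$, and this is the main obstacle. Because $K$ is fully specified (it is a clique) and the regularizing term adds the same scalar $\varepsilon^{*}\tr(q)$ to every diagonal entry, $M_G$ and $M_H$ carry identical $K\times K$ blocks. Ordering the coordinates as $V(G)\setminus K$, then $K$, then $V(H)\setminus K$, the only entries left unspecified after using $M_G$ and $M_H$ are those in the $(V(G)\setminus K,\,V(H)\setminus K)$ block, precisely because $K$ is a separator. I would fill this block by the conditional-independence (Schur complement) formula $M_{12}M_{22}^{+}M_{23}$, where subscripts $1,2,3$ index the three coordinate groups and $M_{22}^{+}$ is the pseudoinverse of the shared $K$-block. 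A Schur complement computation then shows the resulting symmetric matrix is PSD whenever $M_G$ and $M_H$ are: with $M_{22}$ invertible the off-diagonal block of the relevant Schur complement vanishes, leaving the block-diagonal of the Schur complements of $M_G$ and $M_H$, each of which is PSD. This is exactly the standard statement that a partial matrix with a clique separator is completable iff its restrictions to the two sides are. Consequently $q+\varepsilon^{*}\tr(q)\sum_i x_i^2\in\Sigma(G\oplus H)$, so $\varepsilon_{G\oplus H}(q)\le\varepsilon^{*}$, and taking the supremum over $q$ yields $\varepsilon(G\oplus H)\le\varepsilon^{*}$. The one delicate point to verify carefully is PSD-ness of the glued matrix when the shared block $M_{22}$ is singular; the pseudoinverse handles this, but one must check that the columns of $M_{12}$ and the rows of $M_{23}$ lie in the range of $M_{22}$, which holds automatically because $M_G$ and $M_H$ are PSD.
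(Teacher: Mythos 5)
Your proof is correct, and its overall skeleton matches the paper's: the lower bound comes from Lemma \ref{lemma:subgraph}, and the upper bound comes from restricting $q$ to $V(G)$ and $V(H)$, using $\tr(q_G),\tr(q_H)\le\tr(q)$ and $\varepsilon(G),\varepsilon(H)\ge 0$ to absorb the trace normalization, and then gluing the two PSD completions along the shared clique. The genuine difference is in how you prove the gluing step, i.e.\ the key lemma that $A\in\Sigma(G\oplus H)$ if (and only if) $\pi_G(A)\in\Sigma(G)$ and $\pi_H(A)\in\Sigma(H)$. The paper argues via vector arrangements: it takes Gram representations of the two completions, observes that they induce the same Gram matrix on the common clique $K$, and invokes the classical fact (Horn--Johnson, Theorem 7.3.11) that two vector families with identical Gram matrices differ by an orthogonal transformation, so the two arrangements can be rotated into a single arrangement realizing all of $A$. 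You instead construct the completion explicitly, filling the unspecified $(V(G)\setminus K,\,V(H)\setminus K)$ block with $M_{12}M_{22}^{+}M_{23}$ and checking positivity with a generalized Schur complement --- the classical one-clique-separator step from the matrix completion literature (Grone et al.). Your route is more computational and constructive: it exhibits the completion in closed form (the ``conditional independence'' completion) and makes explicit what must be checked when the shared block is singular; the paper's route is coordinate-free and outsources the gluing to an orthogonal-equivalence theorem, at the cost of not producing an explicit completion. One small indexing slip at the end of your argument: the range conditions you need are that the columns of $M_{21}$ (equivalently, the rows of $M_{12}$) and the columns of $M_{23}$ lie in the range of $M_{22}$; as you say, both hold automatically because $M_G$ and $M_H$ are PSD, so this does not affect correctness.
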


\begin{theorem*}[Theorem \ref{thm:join}]
    $\varepsilon(\hat{G}) = \varepsilon(G)$.
\end{theorem*}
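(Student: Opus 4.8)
The plan is to use the apex vertex of $\hat G$ to reduce everything to a Schur complement computation over $G$. Write the new vertex as $0$, so that every maximal clique of $\hat G$ has the form $\{0\}\cup K$ with $K$ a maximal clique of $G$, and a quadratic form $\hat q$ on $X_{\hat G}$ is represented by a partially specified matrix
$$\hat M=\begin{pmatrix} a_{00} & v^{T}\\ v & M\end{pmatrix},$$
where the apex row $v$ is fully specified (since $0$ is adjacent to every vertex) and $M$ is the partially specified matrix of a form over $G$. When $a_{00}>0$, the Schur complement gives a clean dictionary: $\hat q\in\mathcal{P}(\hat G)$ if and only if the form $q'$ with partially specified matrix $M-\tfrac{1}{a_{00}}vv^{T}$ lies in $\mathcal{P}(G)$, and $\hat q\in\Sigma(\hat G)$ if and only if $q'\in\Sigma(G)$, since completing $\hat M$ to a PSD matrix is equivalent, via Schur complement against the corner $a_{00}$, to completing $M-\tfrac{1}{a_{00}}vv^{T}$ to a PSD matrix. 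I would establish this correspondence first, as it is the engine for both inequalities.

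For the inequality $\varepsilon(\hat G)\ge\varepsilon(G)$, I would take a form $q$ achieving $\varepsilon_G(q)=\varepsilon(G)$, normalized so that $\tr(q)=1$, and lift it to $\hat q$ with $a_{00}=0$ and $v=0$. A form with zero apex lies in $\mathcal{P}(\hat G)$ exactly when $q\in\mathcal{P}(G)$, and since $\tr(\hat q)=\tr(q)$ the perturbed matrix of $\hat q+\varepsilon\,\tr(\hat q)(x_0^2+\dots+x_n^2)$ is block diagonal, whose PSD completability is equivalent to completing $M+\varepsilon\,\tr(q)I$; hence $\varepsilon_{\hat G}(\hat q)=\varepsilon_G(q)=\varepsilon(G)$.

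For the reverse inequality $\varepsilon(\hat G)\le\varepsilon(G)$, take an arbitrary $\hat q\in\mathcal{P}(\hat G)$; nonnegativity at $e_0$ forces $a_{00}\ge 0$. When $a_{00}>0$, the dictionary produces $q'\in\mathcal{P}(G)$ with matrix $M-\tfrac1{a_{00}}vv^T$, and the definition of $\varepsilon(G)$ yields a PSD completion $P$ of $M-\tfrac1{a_{00}}vv^T+\varepsilon(G)\,\tr(q')I$. Writing $\lambda=\varepsilon(G)\,\tr(\hat q)$ and $\mu=\varepsilon(G)\,\tr(q')$, I would complete the perturbed matrix of $\hat q+\varepsilon(G)\,\tr(\hat q)(x_0^2+\dots+x_n^2)$ by filling the non-edges of $G$ so that its $M$-block equals $P+\tfrac1{a_{00}}vv^T-\mu I$ (which indeed agrees with $M$ on all specified entries); its Schur complement against the corner $a_{00}+\lambda$ then simplifies to
$$P+(\lambda-\mu)I+\Big(\tfrac1{a_{00}}-\tfrac1{a_{00}+\lambda}\Big)vv^{T},$$
which is a sum of three PSD terms. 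The boundary case $a_{00}=0$ is handled separately: nonnegativity of $\hat q$ then forces $v=0$ (a PSD clique submatrix with a zero diagonal entry has a vanishing row), reducing $\hat q$ to a form over $G$ exactly as in the first inequality.

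The crux, and the step I expect to require the most care, is the bookkeeping that makes the last displayed matrix PSD: one must verify $\lambda\ge\mu$, which amounts to $\tr(\hat q)-\tr(q')=a_{00}+\tfrac1{a_{00}}\|v\|^2\ge 0$, so that the trace normalization of $\hat q$ exactly dominates that of the Schur complement $q'$ and compensates for the shrinkage from $\tfrac1{a_{00}}$ to $\tfrac1{a_{00}+\lambda}$ in the Schur complement coefficient. The remaining delicate points are ensuring all quantities are well defined along the boundary (the degenerate locus $\tr(q')=0$, where $q'=0$, and the locus $a_{00}=0$), which I would dispatch either by the direct argument above or by a continuity argument letting $a_{00}\to 0^+$.
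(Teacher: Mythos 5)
Your proposal is correct and follows essentially the same route as the paper: both arguments hinge on the Schur complement with respect to the apex vertex (your ``dictionary'' is the paper's Lemma that $A/v^*\in\mathcal{P}(G)$), the same case split on the apex diagonal entry $a_{00}=0$ versus $a_{00}>0$, and the same trace inequality $\tr(\hat q)\ge\tr(q')$ that makes the perturbation budget suffice after taking the Schur complement. The only blemish is a notational slip: the completed $M$-block must agree with $M+\lambda I$ (not $M$) on the specified entries, so it should be $P+\tfrac{1}{a_{00}}vv^{T}+(\lambda-\mu)I$ rather than $P+\tfrac{1}{a_{00}}vv^{T}-\mu I$; this corrected block is exactly the one whose Schur complement is the expression you display, so the argument stands as written.
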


Finally, we compute $\varepsilon(G)$ when $G$ is an $n$-cycle $C_n$.

\begin{theorem*}[Theorem \ref{thm:cycle}]
    For $n > 3$,
    $$\varepsilon(C_n)=\frac{1}{n}\left(\frac{1}{\cos(\frac{\pi}{n})} - 1  \right).$$
\end{theorem*}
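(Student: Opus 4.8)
The plan is to pass to matrix completion and then use conic duality. I identify a form $q\in\mathcal{P}(C_n)$ with a \emph{cycle-patterned} partial symmetric matrix---specified on the diagonal and on the cyclic edges $\{i,i+1\}$---all of whose $2\times2$ edge blocks are positive semidefinite; then $\Sigma(C_n)$ is exactly the set of such patterns admitting a full PSD completion, and $\varepsilon_{C_n}(q)$ is the least diagonal inflation (adding $\varepsilon\tr(q)$ to each diagonal entry) making the pattern completable. The cone $\Sigma(C_n)$ is the image of the PSD cone under restriction to the pattern, so its dual is the cone of cycle-patterned matrices $N\succeq0$ (those vanishing on every non-edge). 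Conic separation then gives
\[
\varepsilon_{C_n}(q)=\max_{N\succeq0\ \mathrm{cycle\text{-}patterned}}\frac{-\langle N,q\rangle}{\tr(q)\,\tr(N)},
\]
so $\varepsilon(C_n)$ is the joint bilinear maximum of this ratio. I would first optimize the off-diagonal entries of $q$: writing $d_i=q_{ii}$, each edge entry lives in $[-\sqrt{d_id_{i+1}},\sqrt{d_id_{i+1}}]$ and the optimal choice is $q_{i,i+1}=-\mathrm{sign}(N_{i,i+1})\sqrt{d_id_{i+1}}$, reducing the problem to maximizing
\[
2\textstyle\sum_i|N_{i,i+1}|\sqrt{d_id_{i+1}}-\sum_i N_{ii}d_i
\]
over $d$ in the simplex $\{d\ge0,\ \sum_id_i=1\}$ and cycle-patterned $N\succeq0$ with $\tr(N)=1$.

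For the lower bound I exhibit an explicit extremal pair. Setting $d_i=1/n$ collapses the inner expression to $\frac1n(2\sum_i|N_{i,i+1}|-1)$, so it suffices to build a cycle-patterned $N\succeq0$ with $\tr(N)=1$ and $\sum_i|N_{i,i+1}|=\frac12\sec(\pi/n)$. I take $N=\frac1n I+u\,S$, where $S$ is the \emph{frustrated} signed cycle-adjacency matrix: all edge entries $-1$ except a single edge flipped to $+1$ (equivalently, antiperiodic boundary conditions). Diagonalizing $S$ in the antiperiodic Fourier modes $e^{i\pi(2j+1)k/n}$ gives eigenvalues $-2\cos(\pi(2j+1)/n)$, whose minimum is $-2\cos(\pi/n)$ for every $n$; choosing $u=\frac{1}{2n\cos(\pi/n)}$ makes $N$ PSD and yields $\sum_i|N_{i,i+1}|=nu=\frac12\sec(\pi/n)$, whence $\varepsilon(C_n)\ge\frac1n(\sec(\pi/n)-1)$. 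The matching primal $q$ is the corresponding frustrated rank-one-block configuration with uniform diagonal. The point unifying the parities is that the antiperiodic spectrum produces the constant $\cos(\pi/n)$ regardless of $n\bmod 2$: for odd $n$ the ordinary all-$(-1)$ circulant is already frustrated, whereas for even $n$ the single sign flip is essential, since the unfrustrated even cycle is completable with no inflation at all.

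The upper bound $\varepsilon(C_n)\le\frac1n(\sec(\pi/n)-1)$ is the crux and the main obstacle. Substituting $d_i=x_i^2$ turns the reduced objective into the quadratic form $x^\top(B-\mathrm{diag}(N))x$ with $B_{i,i+1}=|N_{i,i+1}|$ and $B$ zero elsewhere, so it suffices to prove the sharp spectral inequality
\[
\lambda_{\max}\!\big(B-\mathrm{diag}(N)\big)\le\frac1n\Big(\frac{1}{\cos(\pi/n)}-1\Big)
\]
for every cycle-patterned $N\succeq0$ with $\tr(N)=1$. The essential difficulty is that the constant $\cos(\pi/n)$ is forced by the \emph{sparsity} of $N$: the cheap relaxation $|N_{i,i+1}|\le\sqrt{N_{ii}N_{i+1,i+1}}$, valid for the edge blocks in isolation, discards the non-edge zeros of $N$ and only yields the useless constant $\lambda_{\max}(A_{C_n})=2$. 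One must instead exploit that $N$ is globally PSD and vanishes off the cycle, so that the governing quantity is the \emph{frustrated} top eigenvalue $2\cos(\pi/n)$ rather than the unsigned $2$; the constant is precisely the ratio $\cos(\pi/n)=\lambda_{\max}(\text{frustrated }A_{C_n})/\lambda_{\max}(A_{C_n})$. In the circulant case this is transparent, since $\lambda_{\max}(uA_{C_n}-\frac1n I)=2u-\frac1n$ while the frustrated PSD constraint forces $u\le\frac{1}{2n\cos(\pi/n)}$.

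Establishing this spectral inequality for \emph{arbitrary} (non-circulant) cycle-patterned $N$, with the correct constant and uniformly in the parity of $n$, is the hard part; the non-uniform diagonal is afterward absorbed by the edgewise bound $\sqrt{d_id_{i+1}}\le\frac12(d_i+d_{i+1})$, which telescopes around the cycle back to the uniform case. The geometric mechanism behind the constant, which also guides the search for the extremum, is that after inflation the edge angles $\theta_i'$ with $\cos\theta_i'=q_{i,i+1}/\sqrt{(q_{ii}+\varepsilon\tr q)(q_{i+1,i+1}+\varepsilon\tr q)}$ admit a planar completion exactly when some sign choice gives $\sum_i\pm\theta_i'\equiv0\pmod{2\pi}$; the regular frustrated $n$-gon is the configuration at which this first closes, certifying that the constant cannot be improved and hence matching the lower bound.
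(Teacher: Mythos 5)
Your duality setup and your lower bound are correct, and the lower bound is genuinely different from (and more self-contained than) the paper's: the dual certificate $N=\tfrac{1}{n}I+\tfrac{1}{2n\cos(\pi/n)}S$, with $S$ the one-flip signed cycle matrix, is cycle-patterned, PSD, and pairs against the frustrated uniform $q$ to give $\varepsilon(C_n)\ge\tfrac1n\left(\sec(\pi/n)-1\right)$; the paper instead exhibits the same extremal matrix ($A_{ii}=\tfrac1n$, $A_{12}=-\tfrac1n$, other edges $\tfrac1n$) and verifies it by citing the cycle conditions of Barrett et al. But the content of the theorem is the upper bound, and there you have a genuine gap: you reduce it to the spectral inequality $\lambda_{\max}(B-\mathrm{diag}(N))\le\tfrac1n\left(\sec(\pi/n)-1\right)$ for \emph{every} cycle-patterned $N\succeq0$ with $\tr(N)=1$, verify it only in the circulant case, and explicitly defer the general case as ``the hard part.'' That deferred inequality is equivalent to the theorem itself, so nothing in the proposal proves it.

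Moreover, the one tool you offer for the general case --- absorbing non-uniform diagonals via $\sqrt{d_id_{i+1}}\le\tfrac12(d_i+d_{i+1})$ and ``telescoping back to the uniform case'' --- cannot work. After that relaxation the objective is linear in $d$, so its maximum over the simplex is $\max_i\left(|N_{i-1,i}|+|N_{i,i+1}|-N_{ii}\right)$, and this is \emph{not} $O(1/n^3)$: take $N$ supported on the two-edge path $\{1,2\},\{2,3\}$ (allowed, since only non-edges of $C_n$ must vanish) with $N_{13}=0$; optimizing the Gram representation gives $|N_{12}|+|N_{23}|-N_{22}=\tfrac{\sqrt2-1}{2}$ with $\tr(N)=1$, a constant independent of $n$. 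The AM--GM step discards exactly the cycle-global rigidity that forces the constant $\cos(\pi/n)$, which is the same failure you correctly diagnose for the relaxation $|N_{i,i+1}|\le\sqrt{N_{ii}N_{i+1,i+1}}$. The paper closes this step on the primal side: concavity of $\varepsilon_{C_n}(\cdot)$ reduces to extreme points in normal form (singular edge blocks, one negative entry); rank-$2$ completability is characterized by the signed angle-sum condition (which you mention only as heuristic guidance in your last paragraph); and --- the step you are missing --- non-uniform diagonals are handled by proving that $(x,y)\mapsto\arccos\left(\sqrt{xy}/\left(\sqrt{x+\varepsilon}\sqrt{y+\varepsilon}\right)\right)$ is convex and applying Jensen's inequality, which pushes every normal-form matrix to the uniform one, where the angle sum reaches $\pi$ exactly at $\varepsilon=\tfrac1n\left(\sec(\pi/n)-1\right)$, and the intermediate value theorem finishes. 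Jensen applied to that arccos functional is the convexity input your spectral reformulation needs; AM--GM on the bilinear form cannot supply it.
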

Theorem \ref{thm:cycle} implies that the ratio of the volumes between appropriately normalized slices of $\Sigma(C_n)$ and $\mathcal{P}(C_n)$ approaches $1$ as $n$ goes to infinity\change{,} verifying an experimental observation of Drton and Yu \cite{drtonyu}.

We now discuss applications of our results to semidefinite programming. Unlike the usual intuition for nonnegative polynomials and sums of squares, the cone $\mathcal{P}(G)$ is simpler algorithmically in some cases, as it corresponds to partially specified matrices all of whose fully specified principal submatrices are positive semidefinite. In semidefinite programming problems we often encounter the cone $\Sigma(G)$, and we would like to replace $\Sigma(G)$ with the simpler cone $\mathcal{P}(G)$. This leads to replacement of the global positive semidefiniteness constraint with a series of smaller constraints, which is very advantageous in practice. Our bounds on the quality of approximation of $\mathcal{P}(G)$ by $\Sigma(G)$ allows us to show that for our class $\mathcal{G}$ this substitution has a controlled cost.

More specifically, consider the conical program
\begin{equation*}
\begin{aligned}
    \text{minimize} &&\langle B^0,X\rangle\\
    \text{such that } &&\tr(X) = 1,\\
                      &&\langle B^1,X\rangle = b_1,\\
                      &&\dots\\
                      &&\langle B^k,X\rangle = b_k,\\
                      &&  X \in K\change{.}
\end{aligned}
\end{equation*}
\change{W}hen $K = \Sigma(G)$, we denote the value of this program as $\alpha$\change{,} and when $K = \mathcal{P}(G)$, we denote the value by $\alpha'$.

Under some mild technical conditions, we obtain the following bound:
{\color{black}
\begin{theorem*}[Theorem \ref{thm:SDP}]
    \[
        \alpha' \le \alpha \le \frac{1}{1+n \varepsilon(G)}\alpha' + \frac{\varepsilon(G)}{1+n\varepsilon(G)}tr(B^0)\change{.}
    \]
\end{theorem*}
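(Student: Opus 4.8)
The plan is to prove the two inequalities separately. The left inequality $\alpha' \le \alpha$ is immediate from the inclusion $\Sigma(G) \subseteq \mathcal{P}(G)$: every feasible point of the program with $K = \Sigma(G)$ is also feasible for the program with $K = \mathcal{P}(G)$, so the latter minimizes the same objective over a larger feasible set and hence attains a value no larger than $\alpha$.

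For the right inequality I would exhibit an explicit feasible point of the $\Sigma(G)$ program whose objective value equals the right-hand side. First I would check that the $\mathcal{P}(G)$ program attains its optimum: the slice $\{X \in \mathcal{P}(G) : \tr(X) = 1\}$ is compact, since $\mathcal{P}(G)$ is a pointed closed cone and every $X$ in this slice has diagonal entries in $[0,1]$, which bounds the edge entries through the $2\times 2$ principal minors; intersecting with the closed affine constraints keeps it compact. Let $X'$ be an optimizer, so $X' \in \mathcal{P}(G)$, $\tr(X') = 1$, $\langle B^\ell, X'\rangle = b_\ell$, and $\langle B^0, X'\rangle = \alpha'$.

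Next I would invoke the definition of the conical distance. Writing $q_0 = x_1^2 + \cdots + x_n^2$ (the identity, with $\tr(q_0) = n$), the defining property of $\varepsilon_G$ together with $\varepsilon_G(X') \le \varepsilon(G)$ and $q_0 \in \Sigma(G)$ gives $X' + \varepsilon(G)\, q_0 \in \Sigma(G)$, because
\[
X' + \varepsilon(G)\, q_0 = \bigl(X' + \varepsilon_G(X')\,q_0\bigr) + \bigl(\varepsilon(G) - \varepsilon_G(X')\bigr) q_0,
\]
a sum of two elements of the convex cone $\Sigma(G)$ (here $\tr(X')=1$ absorbs the $\tr(q)$ factor in the definition of $\varepsilon_G$). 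Rescaling to restore the trace, I would set
\[
Y = \frac{X' + \varepsilon(G)\, q_0}{1 + n\,\varepsilon(G)}.
\]
Since $\Sigma(G)$ is a cone, $Y \in \Sigma(G)$, and $\tr\bigl(X' + \varepsilon(G) q_0\bigr) = 1 + n\varepsilon(G)$ forces $\tr(Y) = 1$.

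The remaining point, which I expect to be the crux, is that $Y$ must still satisfy the affine constraints $\langle B^\ell, Y\rangle = b_\ell$. Expanding,
\[
\langle B^\ell, Y\rangle = \frac{b_\ell + \varepsilon(G)\,\tr(B^\ell)}{1 + n\,\varepsilon(G)},
\]
so preservation of the constraints is exactly the requirement $\tr(B^\ell) = n\,b_\ell$ for each $\ell$; equivalently, the normalized identity $\tfrac1n q_0$ satisfies every affine constraint (and lies in $\Sigma(G)$), i.e. is feasible. This is the ``mild technical condition'' of the statement, and it is precisely what makes the perturb-and-rescale construction stay feasible; identifying and using it is the main obstacle. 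Granting it, $Y$ is feasible for the $\Sigma(G)$ program, so $\alpha \le \langle B^0, Y\rangle$, and computing the objective yields
\[
\langle B^0, Y\rangle = \frac{\alpha' + \varepsilon(G)\,\tr(B^0)}{1 + n\,\varepsilon(G)} = \frac{1}{1+n\varepsilon(G)}\alpha' + \frac{\varepsilon(G)}{1+n\varepsilon(G)}\tr(B^0),
\]
which is the desired upper bound.
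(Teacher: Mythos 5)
Your proposal is correct and follows essentially the same route as the paper: the paper also takes an optimizer $X'$ of the $\mathcal{P}(G)$-program, forms the point $\frac{1}{1+n\varepsilon(G)}\bigl(X' + \varepsilon(G)I_n\bigr)$ (viewed as a convex combination of $X'$ and $\frac{1}{n}I_n$), and uses feasibility of $\frac{1}{n}I_n$ to conclude this point is feasible for the $\Sigma(G)$-program. The ``mild technical condition'' you identified --- that $\frac{1}{n}I_n$ is feasible, equivalently $\tr(B^\ell) = n b_\ell$ for all $\ell$ --- is exactly the hypothesis stated in the paper's full version of the theorem, so your reconstruction (including the attainment argument the paper leaves implicit) is complete.
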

}
This is especially effective for a rescaled version of the Goemans-Williamson MAX-CUT semidefinite program, as several of these technical assumptions are satisfied automatically.
Thus we can extend the ideas in \cite{chordal} to nonchordal graphs.

Finally, we give some graph theoretic results about our graph class $\mathcal{G}$, in particular showing that it is recognizable in polynomial time, and that the number of maximal cliques in a graph $G \in \mathcal{G}$ is polynomial in the size of $G$ in Section \ref{sec:graph_properties}.

\section{Definitions and Problem Setup}
\subsection{Graphs and Partially Specified Matrices}
Let $G = (V, E)$ be an undirected graph on $n$ vertices. 
We use $V(G)$ and $E(G)$ to denote the set of vertices and edges of $G$ respectively.

We associate to $G$ a vector space, $\Sym(G)$, which we think of as assigning a real number to each edge and each vertex of $G$
\[
    \Sym(G) = \R^{E(G) + V(G)}.
\]

Choosing an ordering of the vertices of $G$ identifies $V(G)$ with $[n]$, so that we can regard each edge in $G$ as an unordered pair of integers. Under this identification, the edges of $G$ index off-diagonal entries in a symmetric $n\times n$ matrix, and the vertices index the diagonal entries.

\begin{example}
Let $K_n$ be the $n$-vertex complete graph on the vertex set $[n]$. $\Sym(K_n)$ is isomorphic to the space of symmetric $n\times n$ matrices over $\R$. We will use $\Sym(K_n)$ to denote the space of symmetric matrices throughout the paper.
\end{example}

For any $G$ with $n$ vertices, there is a linear projection
\[
    \pi_G : \Sym(K_n) \rightarrow \Sym(G),
\]
given by sending a matrix $M$ to the vector of entries corresponding to the edges of $G$:
\[
    \pi_G(M) = (M_{ij})_{\{i,j\} \in E(G)+V(G)}.
\]

We can therefore think of $\Sym(G)$ as a collection of \textbf{partially specified matrices}, where we start with a matrix $M$, and then `forget' the entries of $M$ that do not correspond to edges of $G$. Note that symmetric matrices in $\Sym(K_n)$ correspond naturally to quadratic forms over the variety $\R^n = X_{K_n}$, and similarly $\Sym(G)$ corresponds naturally to the space of quadratic forms over the variety $X_G$. The projection $\pi_G$ on the level of quadratic forms is induced by the embedding of $X_G$ inside $X_{K_n}$.

If $A$ is a partially specified matrix in $\Sym(G)$, then we say that a matrix $M \in \pi_G^{-1}(A)$ is a \textbf{completion} of $A$, since it is a fully specified matrix which agrees with $A$ on all entries in $A$ which are specified.

More generally, if $H$ is a subgraph of $G$, then we also have a projection 
\[
    \pi_H : \Sym(G) \rightarrow \Sym(H),
\]
which additionally forgets the entries of a partially specified matrix which correspond to edges not in $H$. Although technically, this notation does not distinguish between the projection from $\Sym(K_n)$ to $\Sym(H)$ and the projection from $\Sym(G)$ to $\Sym(H)$, this distinction will not be relevant in this paper, so we will abuse notation and write $\pi_H$ without specifying the domain.

Suppose that $K \subseteq G$ is a clique contained in $G$, then in this case, if $A$ is a partially specified matrix in $\Sym(G)$, then $\pi_K(A)$ will be a fully specified principle submatrix of $A$.

\subsection{Partially Positive and Partially Sums of Squares Matrices}
Inside the space of symmetric matrices, $\Sym(K_n)$, we have the subset of positive semidefinite matrices. We will denote these by $\Sigma(K_n) = \mathcal{P}(K_n)$, as they correspond naturally to sum of squares (equivalently nonnegative) quadratic forms over $\R^n$. 
Now, we can define the subset of $\Sym(G)$ which are the projections of positive semidefinite (PSD) matrices in $\Sym(K_n)$. Precisely, 
\[
    \Sigma(G) = \pi_G(\Sigma(K_n)).
\]
Equivalently, we can think of $\Sigma(G)$ as being those partially specified matrices that can be completed to a PSD matrix. We call these \textbf{ partially sums of squares matrices}. These also correspond to quadratic forms on $X_G$ which are sums of squares.

Now, we note that if $M$ is a PSD matrix which completes $A$, then any principal submatrix of $M$ is also PSD, so for any subgraph $H$ of $G$, $\pi_H(A)$ is PSD completable. In particular, if $K$ is a clique contained in $G$, the fully specified submatrix $\pi_K(M) = \pi_K(A)$ is PSD. Therefore we see that if $A \in \Sigma(G)$ is PSD completable, and $K$ is a clique contained in $G$, then $\pi_K(A) \in \Sigma(K)$. In particular $\pi_K(A)$ is a fully specified PSD matrix.

Consider the convex cone 
\[
    \mathcal{P}(G) =  \{A \in \Sym(G)\,\,  : \,\,  \pi_C(A) \in \mathcal{P}(C) \,\,\, \text{for all}\,\,\, C \in C_G \}.
\]
These are the matrices in $\Sym(G)$ such that all of their fully specified principal submatrices are PSD, and these correspond to nonnegative quadratic forms on $X_G$. We will refer to a partially specified matrix in $\mathcal{P}(G)$ as a \textbf{partially positive matrix}. It is clear that $\Sigma(G) \subseteq \mathcal{P}(G)$.

\subsection{Conical Distance}\label{sec:condis}
Finding a way to quantify the difference between $\mathcal{P}(G)$ and $\Sigma(G)$ requires careful consideration. Part of the difficulty lies in the fact that these two convex sets are not compact, which is a requirement of some typical notions of distance in convex geometry. In order to make these cones compact, we choose a hyperplane so that the intersection of these cones with that hyperplane is compact, and then use one of these more common notions of convex analysis. Fortunately, in this case, it is natural to consider the hyperplane of trace-1 matrices, and identify a multiple of the identity matrix with the origin.

Let $H$ be the hyperplane in $\Sym(G)$ given by $H = \{A \in \Sym(G) : \tr(A) = 1\}$ (this is well defined, since the ideal $\ideal_G$ is square-free).
Let $\bar{\Sigma}(G)$ and $\bar{\mathcal{P}}(G)$ be the intersections of $\Sigma(G)$ and $\mathcal{P}(G)$ with $H$ respectively. 
We define the conical distance between $\mathcal{P}(G)$ and $\Sigma(G)$ to be
{\color{black}
\[
    \varepsilon(G) = \min_{\varepsilon} \{\varepsilon : \text{for all} \,\, M \in 
    \bar{\mathcal{P}}(G) \,\,\,  M +  \varepsilon I_n \in \Sigma(G) \}.
\]
It is clear that if $\varepsilon \ge \varepsilon(G)$, then for any $A \in \bar{\mathcal{P}}(G)$, $A + \varepsilon I_n \in \Sigma(G)$.
}

This definition of conical distance has connections to several familiar notions. We will make particular note of the connection between conical distance and the notion of expansion ratio, and the connection between conical distance and the eigenvalues of completions of partial positive matrices.

\subsubsection{Expansion Ratio}
We can consider the \emph{expansion ratio} $\alpha(G)$ of $\bar{\Sigma}(G)$ and $\bar{\mathcal{P}}(G)$ with respect to the scaled identity matrix $\frac{1}{n}I_n$, which is a standard way of measuring distance between compact convex sets \cite{ball_convex}. The expansion ratio is defined to be the smallest $k > 0$ such that $k(\bar{\Sigma}(G)-\frac{1}{n}I_n)$ contains $\bar{\mathcal{P}}(G)-\frac{1}{n}I_n$.

{\color{black}
\begin{lemma}
    $\alpha(G) = 1+n\varepsilon(G)$.
\end{lemma}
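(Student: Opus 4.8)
The plan is to link the two quantities through a single rescaling identity, after translating both compact slices so that the common center $c := \frac{1}{n}I_n$ becomes the origin. Note first that $c$ has trace $1$ and lies in $\bar{\Sigma}(G)$, since the identity is positive definite; hence $c$ is in fact in the relative interior of $\bar{\Sigma}(G) \subseteq \bar{\mathcal{P}}(G)$, which is exactly the point with respect to which the expansion ratio is measured. Working inside the linear hyperplane $H_0 = \{A \in \Sym(G) : \tr(A) = 0\}$, I set $P := \bar{\mathcal{P}}(G) - c$ and $S := \bar{\Sigma}(G) - c$, so that by definition $\alpha(G) = \min\{k > 0 : P \subseteq kS\}$.

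The key step is to rewrite the defining condition for $\varepsilon(G)$ as a dilation statement about $P$ and $S$. Fix $M \in \bar{\mathcal{P}}(G)$ and $\varepsilon \ge 0$. Since $\tr(M + \varepsilon I_n) = 1 + n\varepsilon$ and $\Sigma(G)$ is a cone, the membership $M + \varepsilon I_n \in \Sigma(G)$ is equivalent to $\frac{1}{1+n\varepsilon}(M + \varepsilon I_n) \in \bar{\Sigma}(G)$. I would then record the algebraic identity
\[
    \frac{1}{1+n\varepsilon}(M + \varepsilon I_n) - c = \frac{1}{1+n\varepsilon}(M - c),
\]
which follows from $\frac{\varepsilon}{1+n\varepsilon} - \frac{1}{n} = -\frac{1}{n(1+n\varepsilon)}$. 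Consequently $M + \varepsilon I_n \in \Sigma(G)$ if and only if $\frac{1}{1+n\varepsilon}(M - c) \in S$. This is precisely the mechanism by which positive-scaling invariance of $\Sigma(G)$ converts the additive perturbation $+\varepsilon I_n$ into a multiplicative dilation about the center $c$.

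Quantifying over all $M \in \bar{\mathcal{P}}(G)$, equivalently over all $M - c \in P$, the condition in the definition of $\varepsilon(G)$ becomes $\frac{1}{1+n\varepsilon} P \subseteq S$, that is, $P \subseteq (1+n\varepsilon)S$. Writing $k = 1 + n\varepsilon$, the minimization defining $\varepsilon(G)$ is therefore identical to the one defining $\alpha(G)$, which yields $\alpha(G) = 1 + n\varepsilon(G)$. The one point deserving care, and the only real obstacle, is matching the two feasible ranges: because $\Sigma(G) \subseteq \mathcal{P}(G)$ forces $S \subseteq P$ with $0$ interior to $S$, every admissible dilation factor satisfies $k \ge 1$, so the optimal $k$ corresponds to a nonnegative $\varepsilon$ and the affine substitution $k = 1 + n\varepsilon$ is an order-preserving bijection between the two feasible sets. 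Thus the argument is essentially bookkeeping, the substantive content being the centering identity above.
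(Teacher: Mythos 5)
Your proof is correct and takes essentially the same approach as the paper: the centering identity $\frac{1}{1+n\varepsilon}(M+\varepsilon I_n) - \frac{1}{n}I_n = \frac{1}{1+n\varepsilon}\bigl(M - \frac{1}{n}I_n\bigr)$ is exactly the rescaling computation the paper uses. The only difference is organizational --- you package it as a biconditional so that both inequalities $\alpha(G) \le 1+n\varepsilon(G)$ and $\alpha(G) \ge 1+n\varepsilon(G)$ drop out simultaneously (with the feasible-range matching $k \ge 1 \leftrightarrow \varepsilon \ge 0$ handled explicitly), whereas the paper proves one inequality and leaves the other as ``obtained in a similar way.''
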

\begin{proof}
   Let $k = 1+n\varepsilon(G)$ and consider any $M \in \bar{\mathcal{P}}(G)$. We see that
    \begin{align*}
        \frac{1}{k}(M - \frac{1}{n}I_n) + \frac{1}{n}I_n &= \frac{1}{k}(M + (k - 1)\frac{1}{n}I_n)\\
                                                       &= \frac{1}{k}(M + \varepsilon(G)I_n),
    \end{align*}
    where $M + \varepsilon(G) I_n \in \Sigma(G)$ by definition of $\varepsilon(G)$. 
    We also see that $\tr(\frac{1}{k}(M + \varepsilon(G)I_n)) = 1$.

    Therefore if $M \in \bar{\mathcal{P}}(G)$, there is an element of $\bar{\Sigma}(G)$, namely $X=\frac{1}{k}(M+\varepsilon(G)I_n)$ so that $k(X-\frac{1}{n}I_n) = M - \frac{1}{n}I_n$, i.e. $k(\bar{\Sigma}(G) - \frac{1}{n}I_n) \supseteq \bar{\mathcal{P}}(G)$. Thus, $\alpha(G) \le 1+n\varepsilon(G)$.

   The other bound $\alpha(G) \ge 1+n\varepsilon(G)$ is obtained in a similar way.
\end{proof}
}

\subsubsection{Eigenvalues}
We can also interpret $\varepsilon(G)$ in terms of the eigenvalues of the partially positive matrices:
\begin{lemma}
    {\color{black}If $A \in \bar{\mathcal{P}}(G)$, then $A$ can be completed to a matrix $M$ whose minimal eigenvalue is at least $-\varepsilon(G)$.}
\end{lemma}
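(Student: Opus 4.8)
The plan is to read off the statement almost directly from the definition of $\varepsilon(G)$, together with the characterization of $\Sigma(G)$ as the cone of PSD-completable partial matrices. The only real content is to track how a diagonal shift interacts with the completion operation.

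First I would invoke the defining property of $\varepsilon(G)$. Since $A \in \bar{\mathcal{P}}(G)$, the definition of conical distance guarantees that $A + \varepsilon(G) I_n \in \Sigma(G)$. Because $\Sigma(G) = \pi_G(\Sigma(K_n))$, this says precisely that the partial matrix $A + \varepsilon(G) I_n$ admits a genuine positive semidefinite completion; call it $N \in \Sym(K_n)$, so that $N \succeq 0$ and $\pi_G(N) = A + \varepsilon(G) I_n$.

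Next I would undo the shift. Set $M = N - \varepsilon(G) I_n$. The matrix $\varepsilon(G) I_n$ is supported only on the diagonal, and every diagonal entry is a specified entry (the vertices of $G$ are always present). Hence on each specified entry $\pi_G(M) = \pi_G(N) - \varepsilon(G) I_n = A$: the off-diagonal edge entries are untouched by the shift, and the diagonal entries are shifted back down to those of $A$. Thus $M$ is a completion of $A$.

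Finally, the eigenvalue bound is immediate: subtracting $\varepsilon(G) I_n$ translates the whole spectrum of $N$ by $-\varepsilon(G)$, so $\lambda_{\min}(M) = \lambda_{\min}(N) - \varepsilon(G) \ge -\varepsilon(G)$, using $N \succeq 0$. I do not anticipate a genuine obstacle here; the one point deserving care is the bookkeeping in the previous step, namely confirming that the diagonal shift leaves the specified off-diagonal (edge) entries intact and that the diagonal is fully specified, so that $M$ really is a valid completion of $A$ rather than of some other partial matrix.
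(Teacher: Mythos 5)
Your proposal is correct and follows essentially the same route as the paper: both invoke the defining property of $\varepsilon(G)$ to get a PSD completion $N$ of $A + \varepsilon(G) I_n$, then set $M = N - \varepsilon(G) I_n$ and observe that the spectrum shifts down by exactly $\varepsilon(G)$. The paper's proof additionally records the converse bookkeeping (a completion with minimal eigenvalue $\lambda_{\min}$ yields a PSD completion of $A - \lambda_{\min} I_n$) so as to state $\varepsilon(G)$ as a min-max over completions, but for the lemma as stated your argument is complete.
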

\begin{proof}
    By definition, $\varepsilon(G)$ is the smallest $\varepsilon$ so that for every $A \in \bar{\mathcal{P}}(G)$, $A + {\color{black}\varepsilon} I_n$ is PSD completable. If $M$ is a PSD completion of {\color{black}$A + \varepsilon I_n$}, then {\color{black} $M - \varepsilon I_n$} is a completion of $A$ with minimal eigenvalue at least $-\varepsilon$. Similarly, if $M$ is a completion of $A$ with a minimum eigenvalue $\lambda_{\min}$, then $M - \lambda_{\min} I_n$ would be a PSD completion for $A - \lambda_{\min} I_n$. Thus,
\[
\varepsilon(G) = \min_{A \in \bar{\mathcal{P}}(G)} \max_{M \in \pi_G^{-1}(A)} -\lambda_{\min}(M).
\]
\end{proof}

In other words, computing the conical distance between this pair of cones is equivalent to finding completions of partially positive matrices with minimum eigenvalues which are as large as possible.

\subsection{Monotonicity}
One simple observation to make about conical distance is that it is monotonic increasing in the induced subgraph ordering, as indicated by the following lemma:
\begin{lemma}\label{lemma:subgraph}
If $H$ is an induced subgraph of $G$, then $\varepsilon(H) \le \varepsilon(G)$.
\end{lemma}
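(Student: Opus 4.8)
The plan is to show that any obstruction to sum-of-squares approximation on $H$ can be transplanted to $G$ by zero-padding, so that the perturbation $\varepsilon(G)$ which is certified to work for $G$ automatically works for $H$. I would identify $V(H)$ with a vertex subset $W \subseteq V(G)$, and observe that because $H$ is \emph{induced}, the edges of $G$ with both endpoints in $W$ are exactly $E(H)$. I would then define a zero-extension map $\iota : \Sym(H) \to \Sym(G)$ sending a partial matrix $A_H$ to the partial matrix $A_G$ that agrees with $A_H$ on the vertices and edges of $H$ and is zero on all remaining specified entries (the diagonal entries indexed by $V(G) \setminus W$ and the off-diagonal entries indexed by $E(G) \setminus E(H)$). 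Since $\iota$ adds only zero diagonal entries, it preserves trace, so it maps $\bar{\mathcal{P}}(H)$ into the trace-$1$ hyperplane of $\Sym(G)$.

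The first substantive step is to check that $\iota$ sends partially positive matrices to partially positive matrices, i.e.\ $\iota(\bar{\mathcal{P}}(H)) \subseteq \bar{\mathcal{P}}(G)$. I would verify this clique by clique: for a maximal clique $C \in C_G$, the set $C \cap W$ is a clique of $H$ (this is exactly where the induced hypothesis enters, since every edge of $G$ with both endpoints in $W$ lies in $E(H)$), so the block of $\pi_C(A_G)$ indexed by $C \cap W$ equals a principal submatrix of a maximal-clique submatrix of $A_H$ and is therefore PSD, while every entry of $\pi_C(A_G)$ touching a vertex of $C \setminus W$ is zero. Thus $\pi_C(A_G)$ is block diagonal with one PSD block and one zero block, hence PSD, giving $A_G \in \mathcal{P}(G)$.

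The second step uses the definition of conical distance through completions. Set $\varepsilon = \varepsilon(G)$ and fix an arbitrary $A_H \in \bar{\mathcal{P}}(H)$, writing $A_G = \iota(A_H)$. By definition of $\varepsilon(G)$ we have $A_G + \varepsilon I_n \in \Sigma(G)$, so $A_G + \varepsilon I_n$ admits a PSD completion $M_G$. I would restrict to the principal submatrix $M_H = (M_G)_{W,W}$: it is PSD because a principal submatrix of a PSD matrix is PSD, and on the entries specified by $H$ it agrees with $A_H + \varepsilon I_m$ (the off-diagonal specified entries come from $A_H$, which $\iota$ left untouched, and the diagonal picks up $\varepsilon$). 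Hence $M_H$ is a PSD completion of $A_H + \varepsilon I_m$, so $A_H + \varepsilon I_m \in \Sigma(H)$. Since $A_H$ was arbitrary, $\varepsilon = \varepsilon(G)$ is a valid perturbation for $H$, and minimality in the definition of $\varepsilon(H)$ yields $\varepsilon(H) \le \varepsilon(G)$.

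The main obstacle is the first step, and in particular ensuring the induced hypothesis is used correctly: the claim that $\pi_C(A_G)$ is block diagonal fails for a general (non-induced) subgraph, because then a pair inside $W$ could be an edge of $G$ but not of $H$, and zeroing that entry would no longer reproduce a genuine clique submatrix of $A_H$. Everything else is routine bookkeeping: trace preservation under $\iota$, the block-diagonal PSD verification, and the elementary fact that principal submatrices of PSD matrices are PSD.
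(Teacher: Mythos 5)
Your proof is correct: the zero-extension map $\iota$, the block-diagonal verification that $\iota(\bar{\mathcal{P}}(H)) \subseteq \bar{\mathcal{P}}(G)$, and the restriction of a PSD completion of $\iota(A_H) + \varepsilon(G) I_n$ back to the principal submatrix indexed by $V(H)$ together give exactly the monotonicity claim, and you correctly isolate the one place where the induced hypothesis is indispensable (without it, $\iota(A_H)$ need not land in $\mathcal{P}(G)$ --- e.g.\ $C_4$ as a non-induced subgraph of $K_4$ would break the lemma). The paper states this lemma as a ``simple observation'' and gives no proof at all, so your argument supplies precisely the routine details the authors omitted, in what is evidently the intended way.
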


The remainder of this paper will be devoted to computing this distance for some classes of graphs.

\section{Constructions on Graphs}
We will show that the two operations of clique sums and joins of a graph with cliques act naturally on the conical distance of a graph.

\subsection{Clique Sums}
Suppose that $G$ and $H$ are graphs, and that for some $n$, there is a complete graph $K_n$ and injective homomorphisms $\phi : K_n \rightarrow G$ and $\psi:K_n \rightarrow H$.

We define the \textbf{clique sum of $G$ and $H$} (with respect to $\phi$ and $\psi$) to be the graph $S$ obtained by taking the disjoint union of the graphs $G$ and $H$ and then identifying $\phi(x)$ with $\psi(x)$ for each $x \in K_n$. This is the categorical pushforward of the two morphisms $\phi$ and $\psi$. For notational convenience, we will suppress the dependence of the clique sum on $\phi$ and $\psi$, and denote the clique sum of two graphs as $S = G \oplus H$.

We can consider $G$ and $H$ to be induced subgraphs of $G \oplus H$, so that the subgraph induced by $V(G) \cap V(H)$ is a clique. From Lemma \ref{lemma:subgraph}, it is clear that $\varepsilon(G \oplus H) \ge \max\{\varepsilon(G), \varepsilon(H)\}$. In fact, this inequality is an equality.
\begin{theorem}\label{thm:csum}
    $\varepsilon(G \oplus H) = \max\{\varepsilon(G), \varepsilon(H)\}$.
\end{theorem}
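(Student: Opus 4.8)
The plan is to prove the two inequalities separately. The inequality $\varepsilon(G\oplus H)\ge \max\{\varepsilon(G),\varepsilon(H)\}$ is already recorded in the discussion preceding the statement: since $G$ and $H$ are induced subgraphs of $S:=G\oplus H$, Lemma \ref{lemma:subgraph} gives it immediately. So the real work is the reverse inequality $\varepsilon(S)\le \varepsilon:=\max\{\varepsilon(G),\varepsilon(H)\}$, which, by the definition of conical distance, amounts to showing that for every partially positive $A\in\mathcal{P}(S)$ the matrix $A+\varepsilon\,\tr(A)\,I$ admits a global PSD completion (the trace-$1$ normalization is recovered by homogeneity of the cones).

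First I would record the combinatorial structure of the clique sum. Write $K=V(G)\cap V(H)$, which is a clique, and partition $V(S)$ into $P=V(G)\setminus K$, $K$, and $Q=V(H)\setminus K$. The crucial point is that $S$ has \emph{no} edges between $P$ and $Q$, so every clique of $S$ lies entirely in $V(G)$ or entirely in $V(H)$. Consequently the only unspecified entries of a completion of $A$ form the $P\times Q$ block, and the projections $\pi_G(A)$ and $\pi_H(A)$ are partially positive matrices on $G$ and $H$ respectively, since every clique of $G$ or of $H$ is a clique of $S$ and hence has a PSD fully specified submatrix.

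Next I would reduce to completing each side. Because $\pi_G(A)\in\mathcal{P}(G)$, the defining property of $\varepsilon(G)$ yields a PSD completion of $\pi_G(A)+\varepsilon(G)\,\tr(\pi_G(A))\,I_G$. The diagonal entries of $A$ are nonnegative (they are $1\times 1$ principal submatrices of a partial positive matrix), so $\tr(\pi_G(A))\le \tr(A)$, and $\varepsilon(G)\le\varepsilon$; moreover, adding a further nonnegative multiple of the identity to a PSD-completable matrix preserves PSD-completability (add the same multiple to any PSD completion). Hence $\pi_G(A)+\varepsilon\,\tr(A)\,I_G$ has a PSD completion $M_G$, and symmetrically $\pi_H(A)+\varepsilon\,\tr(A)\,I_H$ has a PSD completion $M_H$. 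By construction both $M_G$ and $M_H$ restrict on $K$ to the same fully specified block $M_{KK}:=\pi_K(A)+\varepsilon\,\tr(A)\,I_K$.

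The main obstacle, and the heart of the argument, is the gluing step: given PSD matrices $M_G$ on $P\cup K$ and $M_H$ on $K\cup Q$ agreeing on their common block $M_{KK}$, produce a single PSD matrix $M$ on $P\cup K\cup Q$ extending both, with the $P\times Q$ block free. This is the classical clique-sum completion lemma for PSD matrices, and I would prove it directly: define the missing cross block by the Schur-complement (conditional-expectation) formula $M_{PQ}=M_{PK}\,M_{KK}^{+}\,M_{KQ}$ using the Moore--Penrose pseudoinverse, and then verify $M\succeq 0$ by a block Schur-complement computation. The range inclusions $\operatorname{range}(M_{KP})\subseteq\operatorname{range}(M_{KK})$ and $\operatorname{range}(M_{KQ})\subseteq\operatorname{range}(M_{KK})$ needed for this hold automatically because $M_G$ and $M_H$ are PSD. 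The resulting $M$ agrees with the specified entries of $A+\varepsilon\,\tr(A)\,I$ on $(P\cup K)^2$ and on $(K\cup Q)^2$, so it is a valid PSD completion, giving $A+\varepsilon\,\tr(A)\,I\in\Sigma(S)$. Specializing to $\tr(A)=1$ yields $\varepsilon(S)\le\varepsilon$, and combined with the easy direction this proves the equality.
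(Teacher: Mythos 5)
Your proof is correct, and it follows the same overall strategy as the paper: the easy inequality $\varepsilon(G\oplus H)\ge\max\{\varepsilon(G),\varepsilon(H)\}$ via induced-subgraph monotonicity, and the hard inequality by projecting $A$ to each side, shifting by $\max\{\varepsilon(G),\varepsilon(H)\}$ (using $\tr(\pi_G(A))\le\tr(A)$ and nonnegativity of the diagonal), and then gluing the two PSD completions across the shared clique $K$. Where you differ is in how the gluing lemma is proved. The paper states it as: $A\in\Sigma(G\oplus H)$ if and only if $\pi_G(A)\in\Sigma(G)$ and $\pi_H(A)\in\Sigma(H)$, and proves it geometrically: take Gram vector arrangements realizing each side, observe that the two arrangements restricted to $K$ have equal Gram matrices, invoke the Horn--Johnson theorem to produce an orthogonal transformation identifying them, and glue the arrangements. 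You instead construct the completion algebraically, filling the unknown $P\times Q$ block with $M_{PK}M_{KK}^{+}M_{KQ}$ and checking positive semidefiniteness by a Schur-complement/range-inclusion computation. Both are standard and equally rigorous; your route has the advantage of being constructive (an explicit formula for the completion, with no appeal to an external orthogonal-equivalence theorem), while the paper's vector-arrangement argument is coordinate-free and avoids pseudoinverse bookkeeping. One small wording issue: your claim that ``the only unspecified entries of a completion of $A$ form the $P\times Q$ block'' is not literally true ($A$ also has unspecified entries at non-edges inside $V(G)$ and inside $V(H)$); what you mean, and what your argument actually uses, is that after replacing the two sides by the full completions $M_G$ and $M_H$, only the $P\times Q$ block remains to be filled.
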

    For the proof, first recall that an $n\times n$ matrix $M$ is positive semidefinite if and only if there exist vectors $v_1, v_2, \dots, v_n \in \R^n$ such that $M$ is the \emph{Gram matrix} of $v_1, v_2, \dots, v_n$, i.e.  $M_{ij} = \langle v_i, v_j\rangle$ for all $i, j \le n$. From this fact, it is clear that the following lemma holds:
    \begin{lemma}
        If $A \in \Sym(G)$, then $A \in \Sigma(G)$ if and only if there are vectors $v_1, \dots, v_n \in \R^n$ so that for any $\{i,j\}\in V(G)+E(G)$, $\langle v_i, v_j \rangle = A_{ij}$.
    \end{lemma}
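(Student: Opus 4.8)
The plan is to unpack the definition $\Sigma(G) = \pi_G(\Sigma(K_n))$ and combine it directly with the Gram-matrix characterization of positive semidefiniteness recalled just above; the lemma is then an essentially immediate two-way translation, and I do not expect any serious obstacle. The only point that requires a moment's care is the ambient dimension of the vectors $v_i$: the statement asserts that vectors in $\R^n$ suffice, so I would track this explicitly rather than leaving the dimension unspecified.

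For the forward implication, I would suppose $A \in \Sigma(G)$. By definition there is a matrix $M \in \Sigma(K_n)$, i.e.\ an $n \times n$ positive semidefinite matrix, with $\pi_G(M) = A$. Since $\pi_G$ retains exactly the entries indexed by $V(G) + E(G)$ (the diagonal entries and the edge entries) and forgets the rest, we have $M_{ij} = A_{ij}$ for every $\{i,j\} \in V(G) + E(G)$. Because $M$ is positive semidefinite, the recalled fact produces vectors $v_1, \dots, v_n \in \R^n$ with $M_{ij} = \langle v_i, v_j \rangle$ for all $i,j \le n$; restricting to the specified indices gives $\langle v_i, v_j \rangle = A_{ij}$ for all $\{i,j\} \in V(G) + E(G)$, which is the desired conclusion.

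For the reverse implication, I would suppose such vectors $v_1, \dots, v_n \in \R^n$ exist and form the full Gram matrix $M$ defined by $M_{ij} = \langle v_i, v_j \rangle$ for all $i, j \le n$. By the recalled fact $M$ is positive semidefinite, so $M \in \Sigma(K_n)$, and by hypothesis $M$ agrees with $A$ on every specified entry, whence $\pi_G(M) = A$. Therefore $A \in \pi_G(\Sigma(K_n)) = \Sigma(G)$. The assertion that $\R^n$-valued vectors already suffice in both directions is exactly the content of the recalled Gram characterization, since by the spectral decomposition an $n \times n$ positive semidefinite matrix is always the Gram matrix of $n$ vectors in $\R^n$; thus no separate dimension-counting argument is required, and this is the only subtlety in an otherwise direct proof.
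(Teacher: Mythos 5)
Your proof is correct and matches the paper's approach exactly: the paper simply asserts the lemma is ``clear'' from the Gram-matrix characterization of positive semidefiniteness together with the definition $\Sigma(G) = \pi_G(\Sigma(K_n))$, and your two-way translation (complete $A$ to a PSD matrix, take its Gram vectors; conversely form the Gram matrix of the given vectors and project) is precisely that argument spelled out, including the correct observation that $n$ vectors in $\R^n$ always suffice.
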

    We call such a collection of vectors a \emph{vector arrangement} inducing $A$.

    \begin{lemma}
        $A\in \Sigma(G \oplus H)$ if and only if $\pi_G(A) \in \Sigma(G)$ and $\pi_H(A) \in \Sigma(H)$.
    \end{lemma}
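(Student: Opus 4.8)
The forward implication is immediate: since $G$ and $H$ are induced subgraphs of $G \oplus H$, the discussion preceding the definition of $\mathcal{P}(G)$ shows that if $A$ is PSD-completable then so are its restrictions, and hence $\pi_G(A) \in \Sigma(G)$ and $\pi_H(A) \in \Sigma(H)$. The real content is the converse, which I would prove using the vector arrangement characterization from the previous lemma.

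Assume $\pi_G(A) \in \Sigma(G)$ and $\pi_H(A) \in \Sigma(H)$. The plan is to produce a single vector arrangement inducing $A$ on all of $G \oplus H$. By the previous lemma there is a vector arrangement $\{u_i\}_{i \in V(G)}$ inducing $\pi_G(A)$ and a vector arrangement $\{w_j\}_{j \in V(H)}$ inducing $\pi_H(A)$; after padding with zero coordinates I may take all of these vectors to lie in a common space $\R^N$. Write $K = V(G) \cap V(H)$ for the shared clique. Because $K$ is a clique, every pair $\{i,j\}$ with $i,j \in K$ is specified in both $G$ and $H$, so $\langle u_i, u_j\rangle = A_{ij} = \langle w_i, w_j\rangle$ for all $i,j \in K$; that is, the sub-arrangements $\{u_i\}_{i\in K}$ and $\{w_i\}_{i \in K}$ have the same Gram matrix.

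The key step is to glue the two arrangements along $K$. Two tuples of vectors in $\R^N$ with equal Gram matrices differ by an orthogonal transformation, so (taking $N$ large enough) there is $Q \in O(N)$ with $Q u_i = w_i$ for every $i \in K$. Replacing each $u_i$ by $Q u_i$ leaves the Gram matrix of the $u$'s unchanged, so $\{Q u_i\}_{i \in V(G)}$ still induces $\pi_G(A)$, while now $Q u_i = w_i$ on $K$. I would then define the combined arrangement by using $Q u_i$ for $i \in V(G)$ and $w_j$ for $j \in V(H) \setminus K$; these agree on $K$, so the definition is consistent. To finish, I verify that this arrangement induces $A$ on every specified entry of $G \oplus H$, using the fact that the edge set of $G \oplus H$ is exactly $E(G) \cup E(H)$: there is no edge between $V(G) \setminus K$ and $V(H) \setminus K$, so every specified pair lies inside $V(G)$ or inside $V(H)$, where the value $A_{ij}$ is recovered from the $Qu$'s or the $w$'s respectively. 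Hence the combined arrangement induces $A$, giving $A \in \Sigma(G \oplus H)$.

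The main obstacle is the gluing step, and in particular justifying that the orthogonal alignment on $K$ can be carried out inside a single ambient space; this rests on the standard fact that vector configurations with a common Gram matrix are related by an element of $O(N)$, which is why it is convenient to first embed both arrangements into a common $\R^N$. Everything else amounts to bookkeeping about which entries of $A$ are specified in the clique sum.
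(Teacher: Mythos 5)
Your proposal is correct and follows essentially the same route as the paper: both arguments use the vector-arrangement characterization of $\Sigma$, note that the two arrangements have equal Gram matrices on the shared clique, invoke the standard fact that configurations with a common Gram matrix differ by an orthogonal transformation to align them in a common ambient space, and then check every edge of $G \oplus H$ lies in $E(G) \cup E(H)$ so the glued arrangement induces $A$. The only cosmetic difference is that you rotate the $G$-side vectors onto the $H$-side ones, while the paper rotates the $H$-side vectors; this is immaterial.
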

    \begin{proof}
    
    If $A \in \Sigma(G\oplus H)$, then it follows immediately that $\pi_G(A) \in \Sigma(G)$ and $\pi_H(A) \in \Sigma(H)$.
    
    Now, suppose that $\pi_G(A) \in \Sigma(G)$ and $\pi_H(A) \in \Sigma(H)$. We wish to show that $A \in \Sigma(G \oplus H)$. The idea will be to `glue' together a vector arrangement inducing $\pi_G(A)$ with one inducing $\pi_H(A)$ using an appropriate orthogonal transformation.
    
    For the sake of concreteness, assume that the vertices of $G \oplus H$ are $V(G\oplus H) = [n]$, and that the vertices of $G$ are $V(G) = [k]$, and finally that $V(H) = \{\ell,\ell+1,\dots,n\}$, where $\ell \le k$. This implies that the vertices in $V(G) \cap V(H)$ are $\{\ell, \dots, k\}$.

        Let $v_1, \dots, v_k \in \R^k$ be a vector arrangement inducing $\pi_G(A)$ and $w_{\ell}, \dots, w_n \in \R^{n-\ell+1}$ be a vector arrangement inducing $\pi_H(A)$. By considering $\R^k$ and $\R^{n-\ell+1}$ subspaces of $\R^n$, we can also take $v_1,\dots, v_k$ and $w_{\ell}, \dots, w_n$ to all lie in $\R^n$. We wish to produce a vector arrangement inducing $A$. 
        If $i,j \in V(G) \cap V(H)$, then $\{i,j\} \in E(G \oplus H)$, because we assumed that $V(G) \cap V(H)$ induced a clique in our definition of the clique sum. Therefore, from the definition of these vector arrangements, for each $i, j \in V(G) \cap V(H)$,
    \[
        \langle v_i, v_j \rangle = A_{i,j} = \langle w_i, w_j \rangle.
    \]

    Consider the subsets of the vectors, $S_1 = \{v_i : i \in V(G) \cap V(H)\}$ and $S_2 = \{w_i : V(G) \cap V(H)\}$. Our previous observation is equivalent to the fact that $\pi_{G \cap H}(A)$ is the Gram matrix of both $S_1$ and $S_2$. In particular, the Gram matrices of $S_1$ and $S_2$ are equal.

    This implies that there exists an orthogonal transformation $T \in O(n)$ so that $Tw_i = v_i$ for each $i \in V(G)\cap V(H)$. This is shown, for example in \cite[Chapter 7, Theorem 7.3.11]{horn2013matrix}.
    
    Consider the collection of vectors $u_i$ where $u_i = v_i$ if $i \le k$ and $u_i = Tw_i$ if $i \ge \ell$ (which is well defined, as $Tw_i = v_i$ when $\ell \le i \le k$). We claim that the $u_i$ induce $A$.  This follows from the fact that each edge $e \in E(G\oplus H)$ is either an edge of $G$ or an edge of $H$. If $e = \{i,j\}$ is an edge of $G$, then 
    \[
        \langle u_i, u_j \rangle = \langle v_i, v_j\rangle = A_{i,j}\change{.}
    \] 
    If $e$ is an edge of $H$, then by orthogonality of $T$,
    \[
    \langle u_i, u_j \rangle = \langle Tw_i, Tw_j\rangle = \langle w_i, w_j\rangle = A_{i,j}.
    \]
    
    In either case, we see that $\langle u_i, u_j \rangle = A_{i,j}$, as desired.
\end{proof}

\begin{proof}[Proof of Theorem \ref{thm:csum}]
Let $A$ be in $\bar{\mathcal{P}}(G \oplus H)$ and consider the matrix $A + \max\{\varepsilon(G), \varepsilon(H)\}I_n$.
        Consider the projection
    \[
        \pi_G(A + \max\{\varepsilon(G), \varepsilon(H)\}I_n) = \pi_G(A) + \max\{\varepsilon(G), \varepsilon(H)\}I_{|V(G)|}\change{.}
    \]
    Note that $\pi_G(A)$ is in $\mathcal{P}(G)$, so from the definition of  $\varepsilon(G)$, 
    \[
        \pi_G(A) + \varepsilon(G)\tr(\pi_G(A)) I_{|V(G)|} \in \Sigma(G).
    \]
   Further note that $\tr(\pi_G(A)) \le 1$, since $\tr(A) = 1$, and the diagonal entries of $A$ are nonnegative.
    Combining these two facts, we can see that 
    \[
        \pi_G(A) + \varepsilon(G) I_{|V(G)|} \in \Sigma(G),
    \]
    and therefore that $\pi_G(A) + \max\{\varepsilon(G), \varepsilon(H)\}I_{|V(G)|} \in \Sigma(G)$.

    A similar argument shows that $\pi_H(A + \max\{\varepsilon(G), \varepsilon(H)\}I_n)$ lies in $\Sigma(H)$. Thus, by our previous lemma, $A  + \max\{\varepsilon(G), \varepsilon(H)\}I_n$ is in $\Sigma(G\oplus H)$.
\end{proof}

\begin{remark}
A graph $G$ is chordal if and only if $G$ is the result of taking repeated clique sums of complete graphs (see, say, \cite{MR1171260}), and for a complete graph $K$, it is clear that the conical distance, $\varepsilon(K)$, is $0$, since $\Sigma(K_n) = \mathcal{P}(K_n)$.
\end{remark}

Thus, this immediately implies the known result that for any chordal graph $G$, $\varepsilon(G) = 0$. This, combined with the observation that any cycle $C_n$ of size at least 4 has $\varepsilon(C_n) > 0$, implies that chordal graphs are the only graphs for which $\mathcal{P}(G) = \Sigma(G)$. Thus, we obtain another proof of the fact that
\begin{corr}
    $\varepsilon(G) = 0$ if and only if $G$ is chordal.
\end{corr}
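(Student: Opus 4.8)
The plan is to prove the two implications separately, using Theorem~\ref{thm:csum} for the forward direction and the monotonicity Lemma~\ref{lemma:subgraph} for the reverse direction; both rely only on standard structural characterizations of chordal graphs together with facts already established in the excerpt.

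For the direction ``$G$ chordal $\Rightarrow \varepsilon(G) = 0$'', I would invoke the fact recalled in the remark above that every chordal graph is obtained from complete graphs by a finite sequence of clique sums. Since $\Sigma(K_m) = \mathcal{P}(K_m)$ for every complete graph, each such building block satisfies $\varepsilon(K_m) = 0$. Theorem~\ref{thm:csum} then shows that this vanishing is preserved under clique sums, because $\varepsilon(G \oplus H) = \max\{\varepsilon(G), \varepsilon(H)\} = 0$ whenever $\varepsilon(G) = \varepsilon(H) = 0$. A straightforward induction on the number of clique-sum operations used to assemble $G$ then yields $\varepsilon(G) = 0$.

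For the contrapositive of the reverse direction, ``$G$ not chordal $\Rightarrow \varepsilon(G) > 0$'', I would use the classical characterization that a non-chordal graph contains an induced cycle $C_n$ of length $n \ge 4$. Lemma~\ref{lemma:subgraph} gives monotonicity of $\varepsilon$ under the induced-subgraph ordering, so $\varepsilon(G) \ge \varepsilon(C_n)$. It therefore suffices to know that $\varepsilon(C_n) > 0$ for every $n \ge 4$, which is precisely the strict positivity recorded in the discussion preceding this corollary (and quantified exactly in Theorem~\ref{thm:cycle}). Chaining these inequalities gives $\varepsilon(G) \ge \varepsilon(C_n) > 0$, completing the contrapositive.

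The only genuinely nontrivial input is the strict positivity $\varepsilon(C_n) > 0$; everything else is a formal assembly of Theorem~\ref{thm:csum}, Lemma~\ref{lemma:subgraph}, and the two standard facts about chordal graphs (their clique-sum decomposition and the presence of a long induced cycle in any non-chordal graph). Thus the real obstacle is deferred to the analysis of $\varepsilon(C_n)$, which must exhibit a partially positive matrix supported on the cycle admitting no positive semidefinite completion. At this stage of the paper that positivity is taken as an observation, so the argument here reduces to assembling the pieces and citing it.
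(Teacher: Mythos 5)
Your proposal is correct and follows essentially the same route as the paper: the forward direction via the clique-sum decomposition of chordal graphs into complete graphs together with Theorem~\ref{thm:csum}, and the reverse direction via an induced cycle of length at least $4$, Lemma~\ref{lemma:subgraph}, and the positivity $\varepsilon(C_n) > 0$ (later quantified in Theorem~\ref{thm:cycle}). You also correctly identify that the positivity of $\varepsilon(C_n)$ is the only nontrivial input, which the paper likewise treats as an observation at this point.
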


\subsection{Joins of Graphs with Cliques}
Another common construction in graph theory is the \textbf{join} of two graphs, $G$ and $H$. We define $G \land H$ by taking its vertex and edge sets to be
\[
    V(G \land H) = V(G) \sqcup V(H),
\]
\[
    E(G \land H) = E(G) \cup E(H) \cup \{\{i,j\} : i \in V(G), j \in V(H)\}.
\]

If $H$ is a one vertex graph, and $G$ is arbitrary, then define the \textbf{cone} over $G$ to be $\hat{G} = G \land H$. That is, $\hat{G}$ is the graph consisting of $G$, together with a new vertex, which we will denote by $v^*$, which is connected to all of the vertices in $G$. If $G$ is chordal, then $\hat{G}$ will also be chordal. 

For an arbitrary graph $G$, we see that $\hat{G}$ will have the same conical distance as $G$.
\begin{theorem}\label{thm:join}
    $\varepsilon(\hat{G}) = \varepsilon(G)$.
\end{theorem}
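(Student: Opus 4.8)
The inequality $\varepsilon(\hat{G}) \ge \varepsilon(G)$ is immediate from Lemma \ref{lemma:subgraph}, since $G$ is an induced subgraph of $\hat{G}$; all the work is in the reverse inequality. The plan is to take an arbitrary $A \in \bar{\mathcal{P}}(\hat{G})$ and show directly that $A + \varepsilon(G) I_{n+1} \in \Sigma(\hat{G})$. Writing the cone vertex $v^*$ last, I would decompose
\[
    A = \begin{pmatrix} A_G & b \\ b^{T} & c \end{pmatrix},
\]
where $A_G \in \Sym(G)$ collects the $G$-entries, $b \in \R^n$ is the fully specified vector of entries joining $v^*$ to $V(G)$, and $c = A_{v^* v^*} \ge 0$. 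Since $v^*$ is adjacent to every vertex of $G$, the maximal cliques of $\hat{G}$ are exactly $K \cup \{v^*\}$ for $K \in C_G$, so membership $A \in \mathcal{P}(\hat{G})$ is precisely the requirement that $\begin{pmatrix} A_K & b_K \\ b_K^{T} & c \end{pmatrix} \succeq 0$ for every $K \in C_G$, where $A_K = \pi_K(A_G)$ and $b_K$ is the restriction of $b$ to $K$.

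The key device is the Schur complement with respect to the $v^*$-diagonal entry (equivalently, the vector-arrangement lemma, placing $v^*$ along a fixed axis and projecting the remaining vectors onto its orthogonal complement). Assuming $c > 0$, a completion is PSD if and only if the induced completion of the Schur complement is PSD. Applying this cliquewise shows that $A \in \mathcal{P}(\hat{G})$ is equivalent to $B := A_G - \tfrac{1}{c}bb^{T} \in \mathcal{P}(G)$, and applying it globally shows that $A + \varepsilon(G) I_{n+1} \in \Sigma(\hat{G})$ if and only if the Schur complement $S := A_G + \varepsilon(G) I_n - \tfrac{1}{c+\varepsilon(G)}bb^{T}$ lies in $\Sigma(G)$. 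Thus the theorem reduces to proving $S \in \Sigma(G)$.

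To finish, I would rewrite
\[
    S = B + \varepsilon(G) I_n + \Bigl(\tfrac{1}{c} - \tfrac{1}{c+\varepsilon(G)}\Bigr) bb^{T},
\]
observing that the coefficient $\tfrac{1}{c} - \tfrac{1}{c+\varepsilon(G)} = \tfrac{\varepsilon(G)}{c(c+\varepsilon(G))} \ge 0$, so the last term is the projection of a PSD matrix and hence lies in $\Sigma(G)$. Since $\tr(A)=1$ gives $\tr(B) = 1 - c - \tfrac{\|b\|^2}{c} \le 1$ and $B \in \mathcal{P}(G)$, rescaling $B$ to the trace-$1$ slice and invoking the defining property of $\varepsilon(G)$ (together with $I_n \in \Sigma(G)$) yields $B + \varepsilon(G) I_n \in \Sigma(G)$; adding the PSD term keeps us in the convex cone $\Sigma(G)$, so $S \in \Sigma(G)$ as desired. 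The main obstacle is the bookkeeping around the cone vertex: one must verify the trace inequality $\tr(B) \le 1$ so that the definition of $\varepsilon(G)$ legitimately applies to $B$, and handle the degenerate cases separately — namely $c = 0$ (which forces $b = 0$ and collapses the statement directly to the definition of $\varepsilon(G)$ applied to $A_G$) and $\varepsilon(G) = 0$ (where $G$, and hence $\hat{G}$, is chordal). These are the only points where the clean Schur-complement reduction requires care.
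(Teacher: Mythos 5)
Your proof is correct and takes essentially the same route as the paper: split on whether the cone-vertex diagonal entry is zero or positive, use the Schur complement at $v^*$ cliquewise to show the complement lies in $\mathcal{P}(G)$ (the paper's Lemma \ref{lem:schur}), bound its trace by $1$, and conclude by decomposing the shifted matrix into a sum of elements of the relevant cone. The only cosmetic difference is that you carry out the final sum at the Schur-complement level inside $\Sigma(G)$ and lift back through the completion equivalence, whereas the paper writes $A + \varepsilon(G)I$ directly as a sum of three elements of $\Sigma(\hat{G})$ --- the same decomposition viewed before versus after taking the complement.
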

\begin{proof}

    The inequality that $\varepsilon(\hat{G}) \ge \varepsilon(G)$ follows from Lemma \ref{lemma:subgraph}, since $G$ is an induced subgraph of $\hat{G}$. 
    
    A partial matrix in $\Sym(\hat{G})$ can be regarded as a matrix in $\Sym(G)$, together with an appended row and column corresponding to the new vertex that is added. That is, a partial matrix in $\Sym(\hat{G})$ can be written as
    \[
        A = 
        \begin{pmatrix}
            Q & c \\
            c^{\intercal} & b
        \end{pmatrix}\change{,}
    \]
    where $Q \in \Sym(G)$, $c \in \R^{n}$ and $b \in \R$.

    Now, suppose that we have $A$ which is in $\mathcal{P}(\hat{G})$. We consider two cases: either $b = 0$ or $b > 0$.
    
    If $b = 0$, then consider a $2\times 2$ minor corresponding to an edge of the form $\{v^*, v\}$, for some $v \in V(G)$. We see that we will obtain a submatrix of the form
    \[
    \begin{pmatrix}
        Q_{vv} & c_{v} \\
        c_v & 0
    \end{pmatrix}.
    \]
    Since $A \in \mathcal{P}(G)$, all of the minors corresponding to edges must be positive semidefinite, and so we see that $c_v = 0$ for all $v \in V(G)$. This implies that $c = 0$, and in fact,
    \[
        A = 
        \begin{pmatrix}
            Q & 0 \\
            0 & 0
        \end{pmatrix}.
    \]
    Thus, we see that
    \[
        A +{\color{black}\varepsilon(G)I_n} = 
        \begin{pmatrix}
            Q +{\color{black}\varepsilon(G)I_{n-1}}& 0 \\
            0 & {\color{black}\varepsilon(G)}
        \end{pmatrix}.
    \]
    Clearly, because there is a PSD completion of $Q + {\color{black}\varepsilon(G)}I_n,$ we can obtain a PSD completion of $A + {\color{black}\varepsilon(G)}I_n \in \Sigma(\hat{G})$.
    
    Now, we consider the case $b > 0$, where we will need an additional lemma.  In this case, we will denote by $A / v^*$ the partially specified matrix defined by $Q - b^{-1}\pi_{G}(cc^{\intercal}) \in \Sym(G)$. We call this the Schur complement of the partially specified matrix $A$ with respect to the $1\times 1$ submatrix corresponding to  $v^*$.
    
   \begin{lemma}\label{lem:schur}
       If $A$ is in $\mathcal{P}(\hat{G})$ and $b \neq 0$, then $A / v^* \in \mathcal{P}(G)$.
   \end{lemma} 
   This lemma states that every point in $\mathcal{P}(\hat{G})$ decomposes into the sum of a rank 1 PSD-completable partial matrix and a partial matrix in $\mathcal{P}(G)$.
   \begin{proof}[Proof of Lemma \ref{lem:schur}]
       This follows from the fact that Schur complements preserve the class of positive semidefinite matrices (see \cite[A.5.5]{cvx_opt}).
       
       To show that $A / v^*$ is in $\mathcal{P}(G)$, we need to show that for each clique $K \subseteq G$, $\pi_K(A / v^*)$ is PSD.
       
       Given a clique $K$ in $G$, we can consider the subgraph of $\hat{G}$ induced by $V(K) \cup \{v^*\}$, which will be a clique in $\hat{G}${\color{black}, since $v^*$ is connected to all vertices of $G$}. Because $A \in \mathcal{P}(\hat{G})$, we have that $\pi_{K \cup \{v^*\}}(A)$ is PSD.
       
       Now, we see that 
       \[
       \pi_{K \cup \{v^*\}}(A) = 
        \begin{pmatrix}
            \pi_K(Q)& \pi_K(c) \\
            \pi_K(c) & b
        \end{pmatrix},
       \]
       where $\pi_K(c)$ is the projection of $c$ onto the subspace of $\R^n$ corresponding to the vertices in $K$.  {\color{black} The Schur complement of this fully specified matrix is then
    \[
        \pi_{K \cup \{v^*\}}(A) / v^* = \pi_K(Q) - b^{-1}\pi_K(c) \pi_K(c)^{\intercal}.
    \]
    Because $\pi_{K \cup \{v^*\}}(A)$ is positive semidefinite, its Schur complement is positive semidefinite.
    
    Compare this to $\pi_K(A / v^*)$, which is
    \[
        \pi_{K}(A / v^*) = \pi_K(Q - b^{-1}cc^{\intercal}) =  \pi_K(Q) - b^{-1}\pi_K(cc^{\intercal}).
    \]
    Note that the entries of $\pi_K(cc^{\intercal})$ satisfy
    \[
        \pi_K(cc^{\intercal})_{ij} = c_ic_j = (\pi_K(c)_i)(\pi_K(c)_j),
    \]
    so that $\pi_K(cc^{\intercal}) = \pi_K(c)\pi_K(c)^{\intercal}$.
       }
       
       We then see that \[
       \pi_K(Q - b^{-1}cc^{\intercal}) = 
       \pi_K(Q) - b^{-1}\pi_K(c)\pi_K(c)^{\intercal}
       \]
       is the Schur complement of the fully specified matrix $\pi_{K \cup \{v^*\}}(A)$. {\color{black} Because the Schur complement of a fully specified PSD matrix is PSD, $\pi_K(A / v^*)$ is therefore PSD for every clique $K$ of $G$, as desired}.
   \end{proof}

\emph{Proof of Theorem \ref{thm:join} continued.}   Now, we see that if $A \in \mathcal{P}(\hat{G})$, then $A$ is of the form 
   \[
        A = 
        \begin{pmatrix}
            A / v^* & 0 \\
            0 & 0
        \end{pmatrix} + 
        \pi_{\hat{G}}
        \begin{pmatrix}
            b^{-1}cc^{\intercal} & c \\
            c^{\intercal} & b
        \end{pmatrix} = 
        \begin{pmatrix}
            A / v^*& 0 \\
            0 & 0
        \end{pmatrix} + 
        \pi_{\hat{G}}\left(
        \begin{pmatrix}
            \frac{c}{\sqrt{b}} \\ \sqrt{b}
        \end{pmatrix}
        \begin{pmatrix}
            \frac{c^{\intercal}}{\sqrt{b}} & \sqrt{b}
        \end{pmatrix}\right).
   \]
   Since $A / v^* \in \mathcal{P}(G)$, and $\tr(A / v^*) \le 1$, we obtain that $A / v^* + \varepsilon(G)I_{n-1} \in \Sigma(G)$:
   \[
        A + \varepsilon(G)I_n =
        \begin{pmatrix}
            A / v^* + {\color{black}\varepsilon(G)}I_{n-1}& 0 \\
            0 & 0
        \end{pmatrix} + 
        \pi_{\hat{G}}\left(
        \begin{pmatrix}
            \frac{c}{\sqrt{b}} \\ \sqrt{b}
        \end{pmatrix}
        \begin{pmatrix}
            \frac{c^{\intercal}}{\sqrt{b}} & \sqrt{b}
        \end{pmatrix}\right) + 
        \begin{pmatrix}
            0 & 0 \\
            0 & {\color{black}\varepsilon(G)}
        \end{pmatrix}.
   \]
   
This is the sum of three matrices which are each in $\Sigma(\hat{G})$, and so the sum is in $\Sigma(\hat{G})$, as desired.
\end{proof}
\begin{remark}
This proof shows something a little stronger than the statement about conical distance: it in fact shows that 
every element $A \in \mathcal{P}(\hat{G})$ can be decomposed as $A = B + C$, where $B, C \in \Pos(\hat{G})$, $B$ can be completed to a rank 1 PSD matrix, and $C$ satisfies $C_{v^*v^*} = 0$.
\end{remark}

Iterating the previous lemma $k$ times gives us the following:
\begin{theorem}
    Let $K_k$ be a complete graph of size $k$.  Then $\varepsilon(G \land K_k) = \varepsilon(G)$.
\end{theorem}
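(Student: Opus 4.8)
The plan is to prove the statement $\varepsilon(G \land K_k) = \varepsilon(G)$ by reducing it to the already-established cone theorem (Theorem~\ref{thm:join}) via a straightforward induction on $k$. The key observation is that the join $G \land K_k$ can be built up one vertex at a time: adding a single vertex of $K_k$ that is connected to everything already present is exactly the cone operation. More precisely, I would first record the identity $G \land K_k = \widehat{G \land K_{k-1}}$, which holds because the last vertex of $K_k$ is adjacent to every vertex of $G$ and to every other vertex of $K_k$, and hence is adjacent to all vertices of $G \land K_{k-1}$.

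Granting this identity, the proof is a clean induction. For the base case $k = 1$, we have $G \land K_1 = \hat{G}$, so $\varepsilon(G \land K_1) = \varepsilon(\hat{G}) = \varepsilon(G)$ directly by Theorem~\ref{thm:join}. For the inductive step, assume $\varepsilon(G \land K_{k-1}) = \varepsilon(G)$. Then
\[
    \varepsilon(G \land K_k) = \varepsilon\bigl(\widehat{G \land K_{k-1}}\bigr) = \varepsilon(G \land K_{k-1}) = \varepsilon(G),
\]
where the second equality is Theorem~\ref{thm:join} applied to the graph $G \land K_{k-1}$, and the third is the inductive hypothesis. This is precisely the ``iterating the previous lemma $k$ times'' that the surrounding text anticipates.

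The only step requiring genuine care is the graph-theoretic identity $G \land K_k = \widehat{G \land K_{k-1}}$, and it is worth verifying the edge sets match under the definitions given. Writing $K_k = K_{k-1} \land K_1$ (the complete graph on $k$ vertices is the join of the complete graph on $k-1$ vertices with a single vertex), the associativity and commutativity of the join operation on vertex and edge sets gives $G \land K_k = G \land (K_{k-1} \land K_1) = (G \land K_{k-1}) \land K_1$, and the right side is by definition the cone $\widehat{G \land K_{k-1}}$. I would confirm associativity at the level of the defining edge-set formula: in both $(G \land H) \land L$ and $G \land (H \land L)$ the edges are exactly $E(G) \cup E(H) \cup E(L)$ together with every cross-pair between distinct parts, so the two graphs are identical. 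I do not expect any obstacle here, as the construction of the cone and join are defined purely combinatorially and the conical distance results have already done all the analytic work; this final theorem is genuinely just bookkeeping on top of Theorem~\ref{thm:join}.
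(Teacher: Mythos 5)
Your proof is correct and takes exactly the approach the paper intends: the paper's entire argument is the one-line remark ``Iterating the previous lemma $k$ times gives us the following,'' and your induction on $k$, using the identity $G \land K_k = \widehat{G \land K_{k-1}}$ (justified by associativity of the join) together with Theorem~\ref{thm:join} at each step, is precisely that iteration written out in full. The bookkeeping you flag as the only delicate point is indeed the only content beyond Theorem~\ref{thm:join}, and you handle it correctly.
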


\section{Rank and an Exact Conical Distance for Cycle Graphs}
\subsection{Rank of an Optimal Completion}%
\label{sub:rank_of_an_optimal_completion}

Define the conical distance \textbf{at a positive partially specified matrix} $A \in \mathcal{P}(G)$ as
\[
    \varepsilon_G(A) = \min \{\varepsilon : A + \varepsilon I \in \Sigma(G)\}\change{.}
\]

First notice that for fixed $A$, this can be computed in terms of the following semidefinite program:

\begin{equation*}
\begin{aligned}
    \varepsilon_G(A) =\;&\text{min} & &\varepsilon\\
                        &\text{such that }& &\forall \{i,j\}\in E(G): \; (M - \varepsilon I_n)_{i,j} = A_{i,j}\\
                        &&&  M \succeq 0\change{.}
\end{aligned}
\end{equation*}

We see that the feasible set is defined only by equations and the positive semidefiniteness constraints, and moreover that by choosing $\varepsilon$ large enough, there will be positive definite feasible points. By Slater's condition (\cite[theorem 2.15]{BPT}), we obtain strong duality:
\begin{equation*}
\begin{aligned}
    \varepsilon_G(A) =\;&\text{max} & & \sum_{\{i,j\} \in E(G)} A_{i,j} Y_{i,j} \\
                    & \text{such that } & &\forall \{i,j\} \not \in E(G):\;Y_{i,j} = 0\\
                     &  &&\tr(Y) = 1\\
                     &  &&Y \succeq 0\change{.}
\end{aligned}
\end{equation*}

The convex cone associated with the feasible set of this program, 
\[
    \Sigma^*(G) = \{Y : Y \succeq 0, \forall i,j \not \in E(G): Y_{i,j} = 0\}
\]
is the dual cone to $\Sigma(G)$.

Now, if $M^*$ is the positive semidefinite completion of $A + \varepsilon_G(A)I_n$ obtaining the optimum in the primal SDP, and if $Y^*$ optimizes the dual SDP, and then by complementary slackness,
\[
    Y^* M^* = 0 .
\]

Since $M^*$ is positive semidefinite, all nonzero eigenvectors of $M^*$ must lie in the kernel of $Y^*$, and in particular, $\text{rank}(Y^*) + \text{rank}(M^*) \le n$.

We note that \cite{free_resolution} gives a bound on the rank of any extreme point in the dual cone $\Sigma^*(G)$. We restate this theorem here. Define the \textbf{chordal girth} of $G$ to be the \change{number of vertices in the }smallest induced cycle in $G$ with at least 4 vertices, or $\infty$ if $G$ is chordal.

\begin{theorem}
    If $G$ is a graph whose chordal girth is $g$, and $Y$ is an extreme point of the dual cone $\Sigma^*(G)$, so that $Y$ is not in $\mathcal{P}^*(G)$, then the rank of $Y$ is at least $g-2$.
\end{theorem}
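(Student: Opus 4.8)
The plan is to work with a Gram factorization of $Y$ and translate extremality into a statement about vectors. Write $Y = U U^{\intercal}$ with $U$ an $n \times r$ matrix of full column rank $r = \mathrm{rank}(Y)$, and let $u_1, \dots, u_n \in \R^r$ be its rows, so that $Y_{ij} = \langle u_i, u_j \rangle$. The sparsity constraint $Y_{ij} = 0$ for every non-edge $\{i,j\}$ becomes $u_i \perp u_j$, and $u_i \neq 0$ exactly on the support of $Y$. The reformulation I would use is that $Y$ is an extreme ray of $\Sigma^*(G)$ if and only if the only $S \in \Sym(K_r)$ with $u_i^{\intercal} S u_j = 0$ for all non-edges $\{i,j\}$ is $S = \lambda I_r$; this is the standard description of the face of the PSD cone cut out by the subspace $\{Y_{ij}=0 : \{i,j\} \notin E(G)\}$.

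Next I would reduce to a single induced cycle. Let $H$ be the \emph{support graph} of $Y$, with an edge $\{i,j\}$ whenever $Y_{ij} \neq 0$; then $H$ is a subgraph of $G$, and every clique of $H$ is a clique of $G$. If $H$ were chordal, then by the theorem of Grone et al.\ we would have $\Sigma(H) = \mathcal{P}(H)$, hence $\Sigma^*(H) = \mathcal{P}^*(H)$; since $Y \succeq 0$ is supported on $H$ it lies in $\Sigma^*(H) = \mathcal{P}^*(H) \subseteq \mathcal{P}^*(G)$, contradicting $Y \notin \mathcal{P}^*(G)$. Therefore $H$ is non-chordal and contains a chordless cycle $D$ of length $\ell \geq 4$. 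I would then argue (this is one of the delicate points) that a shortest such $D$ can be taken to have no chord in $G$ either: a $G$-chord $\{a,b\}$ satisfies $Y_{ab}=0$ while $\{a,b\} \in E(G)$, and one would like to use this slack, perturbing $Y$ by adding $\pm t$ to its $(a,b)$ and $(b,a)$ entries while staying in $\Sigma^*(G)$, to split $Y$ nontrivially and contradict extremality. Granting this, $D$ is an induced cycle of $G$, so $\ell \geq g$.

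The heart of the argument is a rank lower bound along this induced cycle. Relabel the cycle vertices $1, \dots, \ell$ cyclically, so that $u_1, \dots, u_\ell$ satisfy $\langle u_i, u_j\rangle \neq 0$ exactly when $i,j$ are cyclically adjacent. Setting $W_k = \mathrm{span}(u_1, \dots, u_k)$, the orthogonality of $u_{k+1}$ to all of $u_1, \dots, u_{k-1}$ forces $u_{k+1} \in W_{k-1}^{\perp}$, so the chain $W_1 \subseteq W_2 \subseteq \cdots$ must gain a new dimension at essentially every step. I would show that if the dimension failed to reach $\ell - 2$, the span closes up prematurely, and this collapse can be converted into a nonscalar symmetric $S$ solving $u_i^{\intercal} S u_j = 0$ over all non-edges — equivalently into a nontrivial splitting of $Y$ inside $\Sigma^*(G)$ — contradicting the extremality criterion from the first paragraph. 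This yields $r \geq \ell - 2 \geq g - 2$.

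The main obstacle is precisely this rank-propagation step: a chordless cycle by itself only forces rank $\geq 2$ (a $4$-cycle is realizable in $\R^2$), so the bound $\ell - 2$ must genuinely exploit extremality to prevent the Gram vectors from collapsing into a low-dimensional subspace. Making the collapse-to-contradiction mechanism precise, together with the chord-elimination claim of the second paragraph, are the two places where real care is needed. I expect that the cleanest rigorous route — and presumably the one taken in the cited reference — is the algebraic one: $\ell - 2$ is the length of the linear strand of the minimal free resolution of the edge ideal near an induced cycle, and the rank of a face of $\Sigma^*(G)$ is governed by the corresponding linear syzygies, which recasts the geometric propagation above as a statement about Betti numbers.
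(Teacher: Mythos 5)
You should know at the outset that the paper does not prove this theorem: it is explicitly restated from the cited reference \cite{free_resolution}, where it follows from the general relationship between ranks of extreme rays of $\Sigma^*$ and linear syzygies of the defining ideal (property $N_{2,p}$), combined with the Fr\"oberg/Eisenbud--Green--Hulek--Popescu characterization of $N_{2,p}$ for quadratic square-free monomial ideals in terms of induced cycles of $G$. So your closing guess about the route taken in the reference is accurate, and your sketch must be judged as a proposed elementary alternative rather than against an in-paper argument.

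Judged that way, it has one genuine gap, and it is not the step you flag as the main obstacle. The rank-propagation step needs no extremality at all: once you have a cycle $D$ of length $\ell \ge 4$ that is induced in the support graph $H$, the principal submatrix of $Y$ on $V(D)$ is a PSD matrix whose support is exactly $C_\ell$, and the minimum semidefinite rank of $C_\ell$ is $\ell-2$. This is a known fact with a clean inductive proof: project $u_2$ and $u_\ell$ onto $u_1^{\perp}$ (the vectors $u_3,\dots,u_{\ell-1}$ already lie there); the projections $v_2,u_3,\dots,u_{\ell-1},v_\ell$ have support exactly $C_{\ell-1}$, since $\langle v_2,v_\ell\rangle = -\langle u_2,u_1\rangle\langle u_1,u_\ell\rangle/\|u_1\|^2 \neq 0$ while all other non-adjacent inner products are unchanged; induct down to the base case $C_4$, where two orthogonal nonzero vectors force rank $2$. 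The genuine gap is the chord-elimination step, which is where extremality and $Y \notin \mathcal{P}^*(G)$ must do all of the work: $D$ is induced in $H$ but may have chords in $G$, and without removing them you cannot compare $\ell$ to the chordal girth $g$ --- indeed $G[V(D)]$ could even be chordal, so your argument as it stands only bounds the rank by the girth of the \emph{support graph}, which is not the theorem. Your proposed fix fails as stated: for a $G$-chord $\{a,b\}$ with $Y_{ab}=0$, the perturbation $Y \pm t(E_{ab}+E_{ba})$ remains PSD for small $t>0$ if and only if $\mathrm{span}(e_a,e_b) \subseteq \mathrm{range}(Y)$, equivalently $E_{ab}+E_{ba} = USU^{\intercal}$ for some symmetric $S$ --- which is exactly the condition you would need in order to invoke your extremality criterion, and nothing in the setup guarantees it. Closing this step, or otherwise passing from induced cycles of the support graph to induced cycles of $G$, is precisely the content that the syzygy-based argument in the reference supplies; until it is filled, the proposal is not a proof.
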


Applying this theorem directly to $M^*$ and $Y^*$ in the previous example gives:

\begin{theorem}
    Let $g$ be the chordal girth of $G$, suppose that $\varepsilon_G(A) > 0$, and let $M^*$ be a PSD completion of $A + {\color{black}\varepsilon_G(A)}I_n$. Then the rank of $M^*$ is at most $n - g + 2$.
\end{theorem}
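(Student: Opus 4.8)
The plan is to combine the rank inequality $\text{rank}(Y^*) + \text{rank}(M^*) \le n$ coming from complementary slackness with the preceding rank theorem applied to a suitably chosen dual optimizer $Y^*$. Concretely, it suffices to produce a dual optimal solution $Y^*$ that is simultaneously (i) an extreme point of $\Sigma^*(G)$ and (ii) not a member of $\mathcal{P}^*(G)$. The rank theorem then forces $\text{rank}(Y^*) \ge g - 2$, and the rank inequality gives $\text{rank}(M^*) \le n - (g-2) = n - g + 2$.

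First I would address the existence of an extreme optimizer. The dual feasible set $\{Y : Y \succeq 0,\ Y_{ij} = 0 \text{ for } \{i,j\} \notin E(G),\ \tr(Y) = 1\}$ is a compact base of the cone $\Sigma^*(G)$, so the set of dual optimizers is a nonempty compact face of it and hence has an extreme point $Y^*$. An extreme point of a face is an extreme point of the whole base, so $Y^*$ generates an extreme ray of $\Sigma^*(G)$; this is condition (i). Since complementary slackness holds between the fixed primal optimum $M^*$ (any PSD completion of $A + \varepsilon_G(A) I_n$ is primal optimal, as $\varepsilon_G(A)$ is the minimal feasible $\varepsilon$) and every dual optimum, we retain $Y^* M^* = 0$ and therefore the bound $\text{rank}(Y^*) + \text{rank}(M^*) \le n$.

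The crux is condition (ii): showing $Y^* \notin \mathcal{P}^*(G)$, and this is exactly where the hypothesis $\varepsilon_G(A) > 0$ enters. From $Y^* M^* = 0$ I would extract the identity $\langle A, Y^* \rangle = -\varepsilon_G(A)$: since $M^*$ agrees with $A$ on every edge and equals $A_{ii} + \varepsilon_G(A)$ on the diagonal, while $Y^*$ is supported on the specified entries with $\tr(Y^*) = 1$, expanding $\tr(M^* Y^*) = 0$ collapses to $\langle A, Y^* \rangle + \varepsilon_G(A)\tr(Y^*) = 0$, giving $\langle A, Y^* \rangle = -\varepsilon_G(A) < 0$. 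On the other hand, $A \in \mathcal{P}(G)$ by hypothesis, so if $Y^*$ were in the dual cone $\mathcal{P}^*(G)$ we would have $\langle A, Y^* \rangle \ge 0$, a contradiction. Hence $Y^* \notin \mathcal{P}^*(G)$, which completes the hypotheses of the rank theorem.

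I expect the main obstacle to lie in the two "soft" steps rather than in any deep difficulty: first, verifying carefully that the extreme optimizer $Y^*$ is an extreme point of $\Sigma^*(G)$ in the precise sense required by the rank theorem (an extreme ray generator), and second, getting the diagonal and trace terms and their signs right in the complementary-slackness expansion so that the clean identity $\langle A, Y^* \rangle = -\varepsilon_G(A)$ emerges. Once that identity is in hand, the strict positivity $\varepsilon_G(A) > 0$ does all the work of separating $Y^*$ from $\mathcal{P}^*(G)$, and the rank conclusion is immediate.
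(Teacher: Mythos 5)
Your proposal is correct and takes essentially the same route as the paper: the complementary-slackness bound $\mathrm{rank}(Y^*)+\mathrm{rank}(M^*)\le n$ combined with the cited theorem that extreme points of $\Sigma^*(G)$ outside $\mathcal{P}^*(G)$ have rank at least $g-2$. In fact, you make explicit the two steps the paper leaves implicit when it ``applies the theorem directly'' --- that a dual optimizer can be chosen to be an extreme ray generator of $\Sigma^*(G)$, and that it lies outside $\mathcal{P}^*(G)$ because $\langle A, Y^*\rangle = -\varepsilon_G(A) < 0$ while $A \in \mathcal{P}(G)$.
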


In particular, if $G$ is a cycle, we see that this rank is at most 2, which will be useful in one of our next results.
\subsection{Exact Conical Distance for the Cycle}
\label{sub:exact_expansion_ratio_for_the_cycle}

The simplest kind of graphs for which we can consider the conical distance question, besides the chordal graphs, is the family of cycle graphs. In fact, we can compute the conical distance for these graphs exactly.

\begin{theorem}\label{thm:cycle}
        For $n > 3$,
        $\varepsilon(C_n) = \frac{1}{n}\left( \frac{1}{\cos(\frac{\pi}{n})} - 1\right)$.
\end{theorem}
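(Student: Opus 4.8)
The plan is to combine the eigenvalue reformulation of the previous subsection with the special structure of the cycle. From the definition of conical distance, $\varepsilon(C_n)=\max_{A\in\bar{\Pos}(C_n)}\varepsilon_{C_n}(A)$, and since $C_n$ is triangle-free for $n>3$, its maximal cliques are exactly the edges, so $A\in\Pos(C_n)$ means every $2\times2$ principal submatrix indexed by an edge is PSD, i.e. $A_{ii}\ge0$ and $A_{i,i+1}^2\le A_{ii}A_{i+1,i+1}$. First I would reduce the PSD-completability of $A+\varepsilon I_n$ to a statement about angles: writing $D$ for its (positive) diagonal and $R=D^{-1/2}(A+\varepsilon I_n)D^{-1/2}$ for the associated partial correlation matrix, completability of $A+\varepsilon I_n$ is equivalent to completability of $R$. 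Setting $R_{i,i+1}=\cos\theta_i$ with $\theta_i\in[0,\pi]$, the key input is the classical characterization of PSD-completability on a cycle: a cyclic partial correlation matrix is completable if and only if the cycle inequalities $\theta_j\le\sum_{i\ne j}\theta_i$ hold for every edge $j$. I would justify this by realizing the specified correlations as consecutive inner products of unit vectors on a high-dimensional sphere, where geodesic distance is a genuine metric; an open chain with prescribed consecutive geodesic lengths $\theta_1,\dots,\theta_{n-1}$ can be bent to achieve any end-to-end distance compatible with the spherical triangle inequalities, and the chords being unspecified leaves exactly enough freedom to close the wrap-around edge precisely when these inequalities hold.

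For the lower bound I would exhibit the symmetric frustrated matrix $A^{*}$ with $A^{*}_{ii}=1/n$, with $A^{*}_{i,i+1}=1/n$ on $n-1$ of the edges and $A^{*}_{n,1}=-1/n$ on the remaining edge. Each $2\times2$ edge minor is singular and PSD, so $A^{*}\in\bar{\Pos}(C_n)$ and $\tr(A^{*})=1$. Adding $\varepsilon I_n$ rescales the correlations to $\pm\frac{1}{1+n\varepsilon}$, producing $n-1$ angles equal to $\beta=\arccos\frac{1}{1+n\varepsilon}$ and one angle equal to $\pi-\beta$. The only binding cycle inequality is $\pi-\beta\le(n-1)\beta$, i.e. $\beta\ge\pi/n$, which forces $\frac{1}{1+n\varepsilon}\le\cos(\pi/n)$ and hence $\varepsilon\ge\frac1n\left(\frac{1}{\cos(\pi/n)}-1\right)$. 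Thus $\varepsilon_{C_n}(A^{*})=\frac1n\left(\frac{1}{\cos(\pi/n)}-1\right)$, giving the lower bound.

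For the upper bound I must show $\varepsilon_{C_n}(A)\le\varepsilon^{*}$ for every $A\in\bar{\Pos}(C_n)$, where $\varepsilon^{*}=\frac1n\left(\frac{1}{\cos(\pi/n)}-1\right)$. At $\varepsilon=\varepsilon^{*}$ a cycle inequality $\theta_j\le\sum_{i\ne j}\theta_i$ is hardest to satisfy when $\theta_j$ is as large as possible and all other angles as small as possible; since $|A_{i,i+1}|\le\sqrt{A_{ii}A_{i+1,i+1}}$, this worst case is exactly all minors singular with a single frustration at edge $j$. Writing $a_i=A_{ii}$ with $\sum_i a_i=1$ and $g(a)=\frac{a}{a+\varepsilon^{*}}$, the unfrustrated angles become $\gamma_i=\arccos\sqrt{g(a_i)g(a_{i+1})}$ and the inequality $\theta_j\le\sum_{i\ne j}\theta_i$ collapses to the single scalar statement
\[
    \sum_{i=1}^n \arccos\sqrt{g(a_i)\,g(a_{i+1})}\ \ge\ \pi .
\]
Since $g(1/n)=\cos(\pi/n)$, every term equals $\pi/n$ at the uniform point $a_i=1/n$ and the sum is exactly $\pi$; so the content of the theorem is that this cyclic trigonometric sum is globally minimized, with value $\pi$, at the uniform point.

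I expect this last inequality to be the main obstacle: proving it globally on the simplex, rather than merely checking the symmetric critical point, is the crux. My preferred route is a spherical-geometry argument built on the spherical Pythagorean identity $\cos\gamma_i=\cos\psi_i\cos\psi_{i+1}$ with $\cos\psi_i=\sqrt{g(a_i)}$: placing unit vectors at polar angle $\psi_i$ and advancing the azimuth by a right angle at each step realizes each $\gamma_i$ as a geodesic step length, so that $\sum_i\gamma_i$ becomes the total length of a spherical path encircling the axis, for which a perimeter lower bound gives $\sum_i\gamma_i\ge\pi$; the parity/closure bookkeeping of this path is precisely where the difficulty concentrates. A more computational alternative is to establish uniqueness of the uniform minimizer through the Lagrange conditions together with a convexity (Schur-type) analysis of the cyclic sum. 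As a consistency check, I would note that the rank bound of the previous subsection forces the optimal completion of $A^{*}$ to have rank at most $2$, matching the planar vector configuration implicit in the frustrated construction.
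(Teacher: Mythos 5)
Your proposal follows the same skeleton as the paper's proof (reduce to a single-frustration, all-minors-singular normal form; lower bound from the frustrated matrix $A^{*}$ via the cycle conditions of \cite{BARRETT19933}; upper bound reduced to a cyclic trigonometric inequality), but it stops short of a proof at exactly the decisive step. You correctly identify that everything hinges on the global inequality $\sum_{i=1}^n \arccos\sqrt{g(a_i)g(a_{i+1})}\ \ge\ \pi$ over the simplex $\sum_i a_i=1$, and you then explicitly defer its proof, offering two speculative routes (a spherical path argument whose ``parity/closure bookkeeping'' you admit is where the difficulty concentrates, and a Lagrange/critical-point analysis, which by itself cannot certify a \emph{global} minimum). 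Checking the uniform point $a_i=1/n$ is not a proof. The paper closes this gap with one clean idea you are missing: the function $f_{\varepsilon}(x,y)=\arccos\left(\sqrt{xy}\,/\,\sqrt{(x+\varepsilon)(y+\varepsilon)}\right)$ is \emph{jointly convex} on $\R^2_+$ (Lemma \ref{lemma:convexity}, by an explicit Hessian computation), so Jensen's inequality applied to the cyclic sum gives $\sum_i f_{\varepsilon^{*}}(a_i,a_{i+1})\ge n f_{\varepsilon^{*}}(1/n,1/n)=n\arccos(\cos(\pi/n))=\pi$; an intermediate value argument in $\varepsilon$ (using $g(0)=0$) then produces a rank-$2$ completion at some $\varepsilon\le\varepsilon^{*}$.

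There is a second, subtler error: the completability criterion you rely on for the upper bound is false as stated. A cyclic partial correlation matrix is \emph{not} completable merely when the polygon inequalities $\theta_j\le\sum_{i\ne j}\theta_i$ hold; the cycle conditions of \cite{BARRETT19933} require, for \emph{every odd subset} $S$, that $\sum_{i\in S}\theta_i-\sum_{i\notin S}\theta_i\le(|S|-1)\pi$. Concretely, for $n=4$ and $\theta_1=\theta_2=\theta_3=\pi-\epsilon$, $\theta_4=\pi-10\epsilon$, all polygon inequalities hold, yet $S=\{1,2,3\}$ gives $2\pi+7\epsilon>2\pi$, and no PSD completion exists. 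Your spherical-chain justification overlooks that on a sphere the end-to-end distance of a two-step chain is bounded above not only by $\theta_1+\theta_2$ but also by $2\pi-\theta_1-\theta_2$, which is precisely the source of the odd-subset conditions. In the normal-form case you actually use (one angle in $[\pi/2,\pi]$, all others in $[0,\pi/2]$) the extra odd-subset conditions turn out to be automatic, so your reduction to $\sum_i\gamma_i\ge\pi$ is repairable, but that verification is absent, and your proposed proof of the characterization itself would fail. Note that the paper sidesteps sufficiency of the cycle conditions entirely by proving its own rank-$2$ completability criterion (signed angle sums equal to a multiple of $2\pi$), invoking \cite{BARRETT19933} only to certify the lower-bound matrix $A^{*}$ --- the one part of your proposal that is complete and agrees with the paper.
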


Before we prove the main theorem in this section, we will need some lemmas.

First, we show that matrices in $\mathcal{P}(C_n)$ which maximize conical distance to $\Sigma(C_n)$ can be reduced to a certain normal form.
\begin{lemma}
    The function $\varepsilon_{C_n}(A)$ is maximized in $\bar{\mathcal{P}}({C_n})$ by a partially positive matrix $A$ such that every fully specified $2\times 2$ minor of $A$ is singular, and $A$ has exactly one negative entry.
\end{lemma}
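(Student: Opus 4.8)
The plan is to turn $\varepsilon_{C_n}$ into a convex function of $A$ via the duality established above, and then read off the normal form from the extreme points of $\bar{\mathcal{P}}(C_n)$. By the strong duality derived for $\varepsilon_G(A)$, we have $\varepsilon_{C_n}(A) = \max_Y \sum_{\{i,j\}\in E(C_n)} A_{i,j}Y_{i,j}$, where $Y$ ranges over the compact spectrahedron $\{Y \in \Sigma^*(C_n) : \tr(Y) = 1\}$, a set that does not depend on $A$. As a pointwise maximum of linear functionals of $A$, the map $A \mapsto \varepsilon_{C_n}(A)$ is convex and continuous. Since $\bar{\mathcal{P}}(C_n)$ is compact and convex, the Bauer maximum principle guarantees that $\varepsilon_{C_n}$ attains its maximum over $\bar{\mathcal{P}}(C_n)$ at an extreme point $A^*$, so it suffices to put extreme points into the claimed form.

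Next I would characterize those extreme points. For $n>3$ the cycle has no triangles, so its maximal cliques are exactly its edges, and $\bar{\mathcal{P}}(C_n)$ is cut out by $\tr(A)=1$, $A_{ii}\ge 0$, and the $2\times 2$ conditions $A_{ii}A_{i+1,i+1}-A_{i,i+1}^2 \ge 0$ for each edge. I claim every extreme point has all of these minors singular. Indeed, if the minor at some edge $\{i,i+1\}$ were strictly positive, then perturbing only the single off-diagonal entry $A_{i,i+1}\mapsto A_{i,i+1}\pm\delta$ for small $\delta$ leaves $\tr(A)=1$ and every other edge constraint untouched (no other edge of the cycle involves $A_{i,i+1}$), while keeping that minor nonnegative; this exhibits $A^*$ as the midpoint of two distinct points of $\bar{\mathcal{P}}(C_n)$, contradicting extremality. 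Hence every fully specified $2\times 2$ minor of $A^*$ is singular, and writing $A_{ii}=t_i^2$ with $t_i\ge 0$ we get $A_{i,i+1}=\sigma_i t_i t_{i+1}$ for signs $\sigma_i\in\{\pm 1\}$.

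For the sign condition I would use diagonal $\pm 1$ congruence (\emph{switching}): for $D=\operatorname{diag}(d)$ with $d\in\{\pm1\}^n$, the map $A\mapsto DAD$ fixes the diagonal and the trace, preserves positive semidefiniteness of every principal submatrix and PSD-completability, and therefore is an automorphism of both $\bar{\mathcal{P}}(C_n)$ and $\Sigma(C_n)$ that leaves $\varepsilon_{C_n}$ invariant; it preserves singularity of all minors and sends $\sigma_i\mapsto d_id_{i+1}\sigma_i$. Since $\prod_i d_id_{i+1}=1$ around the cycle, the product $\prod_i\sigma_i$ is invariant, and switching realizes every sign pattern with that fixed product. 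Now two degenerate possibilities must be ruled out. If some $t_i=0$, both edges at vertex $i$ carry zero entries, so vertex $i$ decouples and $A^*$ is supported on a path, which is chordal and hence automatically PSD-completable, forcing $\varepsilon_{C_n}(A^*)=0$. If all $t_i>0$ but $\prod_i\sigma_i=+1$, then switching makes every $\sigma_i=+1$, so $A^*=\pi_{C_n}(vv^{\intercal})$ for $v=(t_1,\dots,t_n)$, which is PSD-completable and again gives $\varepsilon_{C_n}(A^*)=0$. Both contradict $\varepsilon(C_n)>0$ for $n>3$. Therefore all $t_i>0$ and $\prod_i\sigma_i=-1$, and a final switching brings $A^*$ to a matrix with all minors singular and exactly one negative specified entry, which is the desired normal form.

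The main obstacle is conceptual rather than computational: recognizing that $\varepsilon_{C_n}$ is \emph{convex} in $A$ (so the maximum, not the minimum, is attained at an extreme point) and then giving the clean extreme-point characterization, which crucially uses that $C_n$ is triangle-free so each off-diagonal entry participates in exactly one $2\times 2$ constraint and can be perturbed in isolation. The remaining delicate bookkeeping is the switching argument—verifying invariance of the sign product and that both the vanishing-diagonal case and the all-positive case collapse to $\varepsilon=0$—while the passage from an arbitrary maximizer to an extreme one and the final sign normalization are routine.
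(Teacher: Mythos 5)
Your proof is correct and follows the same overall strategy as the paper's: reduce to an extreme point via convexity of $\varepsilon_{C_n}$, force every fully specified $2\times 2$ minor to be singular by perturbing its off-diagonal entry (using that $C_n$ is triangle-free, so each off-diagonal entry appears in exactly one clique constraint), and then normalize signs by diagonal $\pm 1$ conjugation. The one genuine difference is the convexity step: the paper argues directly that $\alpha\bigl(A+\varepsilon_{C_n}(A)I_n\bigr)+\beta\bigl(Q+\varepsilon_{C_n}(Q)I_n\bigr)$ is a sum of PSD-completable matrices, hence $\varepsilon_{C_n}(\alpha A+\beta Q)\le\alpha\varepsilon_{C_n}(A)+\beta\varepsilon_{C_n}(Q)$, whereas you obtain convexity from the dual SDP representation of $\varepsilon_{C_n}(A)$ as a pointwise maximum of linear functionals over the fixed trace-one slice of $\Sigma^*(C_n)$; both are valid, and your route has the side benefit of getting the convex/concave terminology right --- the paper calls the function ``concave'' even though its displayed inequality is precisely convexity, which is what maximization at an extreme point requires. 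Your sign bookkeeping is also more complete than the paper's: you make the switching invariant $\prod_i\sigma_i$ explicit and rule out the degenerate cases (some $t_i=0$, or all signs positive after switching) by noting each forces $\varepsilon_{C_n}(A^*)=0$, contradicting $\varepsilon(C_n)>0$ for $n>3$, where the paper disposes of these more tersely by observing that such matrices are projections of rank-one PSD matrices and hence not the ``important case.''
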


\begin{proof}
First, we note that $\varepsilon_{C_n}(A)$ is concave as a function of $A$, since if $A$ and $Q$ are in $\bar{\Pos}({C_n})$, and $\alpha, \beta \ge 0$ are such that $\alpha + \beta = 1$, then
\[
    \alpha A + \beta Q + (\alpha\varepsilon_{C_n}(A) + \beta\varepsilon_{C_n}(Q)) I_n = 
    \alpha (A + \varepsilon_{C_n}(A)I_n) + \beta(Q + \varepsilon_{C_n}(Q) I_n).
\]
is the sum of two PSD completable matrices, so that it is PSD completable. Thus this function is maximized at an extreme point.

Note that for $n > 3$, the only cliques in $C_n$ are the edges, so we only need to check that all of the $2 \times 2$ fully specified submatrices are PSD.

Consider any extreme ray $A$ in $\mathcal{P}({C_n})$.
For each edge $e = \{i,j\} \in E(G)$, consider $\det(\pi_e(A))$.
If $\det(\pi_e(A)) > 0$, then we see that if we replace $A_{i,j}$ by $A_{i,j} \pm \delta$ for $\delta$ small enough, then the determinant of this block will still be positive, and the $2\times 2$ block of $A$ would be PSD, and because no other clique in $G$ contains the edge $\{i,j\}$, we see that $A$ will also remain inside $\mathcal{P}$.
This contradicts the fact that $A$ is an extreme ray.
Thus, for each edge $e = \{i,j\}$, $\det(\pi_e(A)) = 0$.

That is, for each $i,j$ so that $\{i,  j\} \in E(C_n)$, $A_{ii}A_{jj} - A_{ij}^2 = 0$. Rewriting, this implies that $A_{ij} = \pm \sqrt{A_{ii}A_{jj}}$. That is, the diagonal entries of an extreme point will determine the off-diagonal entries up to a choice of signs. 

Next, we note that if $D$ is a diagonal matrix where all of the diagonal entries are either $1$ or $-1$, then $D$ is unitary, and hence, conjugating a matrix by $D$ preserves its eigenvalues. Given $A$, an extreme ray in $\mathcal{P}({C_n})$, we can conjugate it by an appropriate diagonal matrix $D$ so that $A$ has a minimal number of negative entries. There are two cases: either $A$ can be conjugated so that all of its entries are nonnegative, or so that exactly one pair of entries is negative. If $A$ can be conjugated so that all of its entries are nonnegative, then in fact, it is the projection of a rank 1 PSD matrix, and so, it is PSD completable. The only important case then is the case when $A$ has exactly one pair of negative entries, and we will call this the \emph{normal form} of an extreme ray $A$.
\end{proof}

\begin{example}
Consider the case of extreme points in $\Pos(C_4)$. We note that since each fully specified submatrix is rank 1, the form of such a partially matrix is:
\[
    \begin{pmatrix}
        m_{11} & \pm \sqrt{m_{11}m_{22}} & ? &  \pm \sqrt{m_{11}m_{44}}\\
        \pm \sqrt{m_{11}m_{22}} & m_{22} & \pm \sqrt{m_{22}m_{44}} & ?\\
         ? & \pm \sqrt{m_{22}m_{33}} & m_{33} & \pm \sqrt{m_{33}m_{44}}\\
         \pm \sqrt{m_{11}m_{44}} & ? & \pm \sqrt{m_{33} m_{44}} & m_{44}
    \end{pmatrix}.
\]

If we conjugate by \change{an }appropriate $\pm 1$ diagonal matrix, we can bring this into the form
\begin{gather*}
    D
    \begin{pmatrix}
        m_{11} & \pm \sqrt{m_{11}m_{22}} & ? &  \pm \sqrt{m_{11}m_{44}}\\
        \pm \sqrt{m_{11}m_{22}} & m_{22} & \pm \sqrt{m_{22}m_{44}} & ?\\
         ? & \pm \sqrt{m_{22}m_{33}} & m_{33} & \pm \sqrt{m_{33}m_{44}}\\
         \pm \sqrt{m_{11}m_{44}} & ? & \pm \sqrt{m_{33} m_{44}} & m_{44}
    \end{pmatrix}
    D = \\
    \begin{pmatrix}
        m_{11} & \sqrt{m_{11}m_{22}} & ? &  \pm\sqrt{m_{11}m_{44}}\\
        \sqrt{m_{11}m_{22}} & m_{22} & \sqrt{m_{22}m_{44}} & ?\\
         ? & \sqrt{m_{22}m_{33}} & m_{33} & \sqrt{m_{33}m_{44}}\\
        \pm  \sqrt{m_{11}m_{44}} & ? & \sqrt{m_{33} m_{44}} & m_{44}
    \end{pmatrix}.
\end{gather*}
Here, $D$ is the diagonal matrix,
\[
    D =     \begin{pmatrix}
        \pm 1 & 0 & 0 & 0\\
        0 & \pm 1 & 0 & 0\\
        0 & 0 & \pm 1 & 0\\
        0 & 0 & 0 & \pm 1\\
    \end{pmatrix}.
\]

    If the $(1,4)$ entry is positive, then it is clear that the resulting partial matrix is completable to a rank 1 PSD matrix.
\end{example}

We will identify $n+1$ with $1$, so that we can write expressions such as $\sum_{i=1}^n A_{ii+1}$, and this will mean that we sum up all of the off-diagonal entries in $A$.

Also, let $\arccos(x) : [-1,1] \rightarrow [0, \pi]$ be the inverse of the $\cos(\theta)$ function defined on this interval.

We can now show a
necessary and sufficient
condition for $A \in \Sym(C_n)$ to be completable to a rank 2 PSD matrix.
\begin{lemma}\label{lemma:equiv_cond}
    If $A \in \mathcal{P}(C_n)$, then $A$ has a rank 2 PSD completion if and only if there are some $a_i \in \{1, -1 \}$ so that
    \begin{align*}
        \sum_{i=1}^n a_i \arccos\left(\frac{A_{ii+1}}{\sqrt{A_{ii}}\sqrt{A_{i+1i+1}}}\right) = 2k\pi,\tag{*}\\
    \end{align*}
    for some integer $k$.
\end{lemma}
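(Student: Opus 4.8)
The plan is to translate the existence of a rank-$2$ PSD completion into a statement about planar vector arrangements, and then read off condition $(*)$ as a total-turning (winding-number) identity around the cycle. Throughout I will assume $A_{ii}>0$ for every $i$, so that $\sqrt{A_{ii}}$ and the ratios inside the $\arccos$ are well defined; the degenerate case where some diagonal entry vanishes forces the incident off-diagonal entries to vanish as well, by positivity of the specified $2\times 2$ minors, and can be handled separately as a lower-dimensional degeneration.

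First I would use the Gram matrix characterization recalled before Theorem \ref{thm:csum}: a matrix $M$ is a PSD completion of $A$ of rank at most $2$ if and only if there are vectors $v_1,\dots,v_n\in\R^2$ with $\langle v_i,v_i\rangle = A_{ii}$ and $\langle v_i,v_{i+1}\rangle = A_{ii+1}$ for all $i$ (indices taken mod $n$). Writing $r_i=\sqrt{A_{ii}}$ and $u_i = v_i/r_i$ for the unit direction, the edge constraint becomes $\langle u_i,u_{i+1}\rangle = A_{ii+1}/(\sqrt{A_{ii}}\sqrt{A_{i+1i+1}})$, so the angle $\theta_i\in[0,\pi]$ between consecutive directions is exactly $\theta_i = \arccos\!\big(A_{ii+1}/(\sqrt{A_{ii}}\sqrt{A_{i+1i+1}})\big)$, the quantity appearing in $(*)$. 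The question is thereby reduced to: when can one place unit vectors $u_1,\dots,u_n$ in the plane realizing the prescribed consecutive angles $\theta_i$, with the closure condition $u_{n+1}=u_1$?

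For the ``only if'' direction, given such directions I would let $\delta_i\in(-\pi,\pi]$ be the signed angle of the planar rotation carrying $u_i$ to $u_{i+1}$. Since $\cos\delta_i=\langle u_i,u_{i+1}\rangle=\cos\theta_i$ and $\theta_i\in[0,\pi]$, we get $\delta_i = a_i\theta_i$ for some sign $a_i\in\{1,-1\}$ (the sign being immaterial when $\theta_i\in\{0,\pi\}$). The composition of rotations $R_{\delta_n}\circ\cdots\circ R_{\delta_1}=R_{\delta_1+\cdots+\delta_n}$ sends $u_1$ to $u_{n+1}=u_1$, and a planar rotation fixing a nonzero vector is a rotation by a multiple of $2\pi$; hence $\sum_{i=1}^n a_i\theta_i=\sum_i\delta_i=2k\pi$, which is $(*)$. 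For the converse, assuming $(*)$ for some signs $a_i$ and integer $k$, I would set $\phi_1=0$, $\phi_{i+1}=\phi_i+a_i\theta_i$, and $v_i = r_i(\cos\phi_i,\sin\phi_i)$. Evenness of cosine gives $\langle v_i,v_{i+1}\rangle = r_ir_{i+1}\cos(a_i\theta_i)=A_{ii+1}$ and $\langle v_i,v_i\rangle=A_{ii}$, while $(*)$ yields $\phi_{n+1}=\phi_1+2k\pi$, so $v_{n+1}=v_1$ and the wrap-around edge is satisfied; the Gram matrix of $v_1,\dots,v_n$ is then a rank-$\le 2$ PSD completion of $A$.

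The only genuinely delicate point is the sign and periodicity bookkeeping in the ``only if'' direction: one must justify that each consecutive angle difference can be taken as a single principal-value increment $\pm\theta_i$ and that these increments sum to an exact multiple of $2\pi$, rather than merely agreeing with the target modulo $2\pi$. Phrasing this as a composition of planar rotations that is forced to fix $u_1$, and hence to have total turning in $2\pi\mathbb{Z}$, is what makes the argument clean; the remaining identities are routine. I would also keep an eye on the degenerate configurations (a vanishing diagonal entry, or $\theta_i\in\{0,\pi\}$) to confirm that the statement and the sign choices remain meaningful there.
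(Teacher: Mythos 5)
Your proof is correct and follows essentially the same route as the paper: reduce to a planar vector arrangement via the Gram matrix characterization, normalize to unit vectors so the edge constraints become prescribed angles $\arccos(\bar{A}_i)$, and then obtain $(*)$ from the closure condition around the cycle, with the signs $a_i$ arising from the two preimages of each cosine value. Your phrasing of the necessity direction via composition of planar rotations fixing $u_1$ is just a repackaging of the paper's telescoping sum of polar-angle differences $\theta_{i+1}-\theta_i = a_i\arccos(\bar{A}_i)+2\pi\ell_i$, and the sufficiency construction (setting $\phi_1=0$ and accumulating signed increments) is identical to the paper's.
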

\begin{proof}
    We make a sequence of reductions to prove the result.

    In terms of vector arrangements, $A$ is completable to a PSD rank 2 matrix if and only if there are vectors $v_1, \dots, v_n \in \R^2$ so that for each $i$
    \begin{align*}
        \|v_i\|^2 &= A_{ii},\\
        \langle v_{i}, v_{i+1} \rangle &= A_{ii+1}.
    \end{align*}

    For the sake of notation, let $\bar{A}_i = \frac{A_{ii+1}}{\sqrt{A_{ii}}\sqrt{A_{i+1i+1}}}$. 
    If we renormalize these equations to make the $v_i$ lie on the unit circle, we can equivalently ask for $v_i \in \R^2$ so that
    \begin{align*}
        \|v_i\|^2 &= 1,\\
        \langle v_{i}, v_{i+1} \rangle &= \bar{A}_i\change{.}
    \end{align*}
    In this case, we will think of $a_i \arccos(\bar{A}_i)$ as the angle between $v_i$ and $v_{i+1}$; the equation is equivalent to the condition that the sum of the angles between the vectors on the circle is a multiple of $2\pi$.
    
    Formally, for each $v_i \in \R^2$ so that $\langle v_i, v_i \rangle = 1$, we can express $v_i$ in polar coordinates. This implies that there are some $\theta_i$ so that
    \[
        v_i = (\cos(\theta_i), \sin(\theta_i)).
    \]
    In this case, these equations reduce to the equations
    \begin{align*}
        \cos(\theta_{i+1} - \theta_{i}) = \bar{A}_i.
    \end{align*}

    To see the necessity of the equation $(*)$, note that if there exist $\theta_i$ satisfying the previous equation, then using some basic facts about the $\cos$ function, there exist some $a_i \in \{-1, 1\}$ and $\ell_i \in \mathbb{Z}$ so that
    \begin{align*}
        \theta_{i+1} - \theta_{i} = a_i \arccos(\bar{A}_i) + 2\pi \ell_i.
    \end{align*}
    Letting $k = -\sum_{i=1}^n \ell_i$, this implies that
    \begin{align*}
        \sum_{i=1}^n(\theta_{i+1} - \theta_{i}) &= \sum_{i=1}^n (a_i \arccos(\bar{A}_i) + 2 \pi \ell_i)\\
                                                &= \sum_{i=1}^n a_i \arccos\left(\frac{A_{ii+1}}{\sqrt{A_{ii}}\sqrt{A_{i+1i+1}}}\right) - 2k\pi\\
                                                &= 0.
    \end{align*}
    which clearly implies equation $(*)$.

    To see the sufficiency of equation $(*)$, suppose that there exist $a_i$ and some $k$ so that equation $(*)$ holds.
    Then, set $\theta_1 = 0$, and for each $1 \le i < n$, set
\[
        \theta_{i+1} = \theta_{i} + a_i \arccos(\bar{A}_i).
    \]
    Then, clearly, for $i < n$, we have the desired result that
    \[
        \cos(\theta_{i+1} - \theta_{i}) = \cos(a_i \arccos(\bar{A}_i)) = \bar{A}_i\change{,}
    \]
    and for $i = n$, we see that 
    \begin{align*} 
          \theta_n &= \sum_{i=1}^{n-1} a_i \arccos(\bar{A}_i)\\
                   &= \sum_{i=1}^{n-1} a_i \arccos\left(\frac{A_{ii+1}}{\sqrt{A_{ii}}\sqrt{A_{i+1i+1}}}\right)\\
                   &= 2k\pi - a_n\arccos\left(\frac{A_{1n}}{\sqrt{A_{11}}\sqrt{A_{nn}}}\right).
    \end{align*}
    Therefore,
    \begin{align*} 
        \cos(\theta_1 - \theta_n) &= \cos\left(2k\pi - a_n\arccos\left(\frac{A_{1n}}{\sqrt{A_{11}}\sqrt{A_{nn}}}\right)  \right)\\
                                  &= \bar{A_n},\\
    \end{align*}
    as we desired.

\end{proof}
The previous lemma has particular application to a partial matrix in normal form.
\begin{lemma}\label{lemma:simpler_equiv_cond}
    If $A \in \mathcal{P}(C_n)$, and $A$ is in normal form, then $A + \varepsilon I_n$ has a rank 2 PSD completion if
    \[
        \sum_{i=1}^n \arccos\left(\frac{\sqrt{A_{ii}A_{i+1i+1}}}{\sqrt{A_{ii} + {\color{black}\varepsilon}}\sqrt{A_{i+1i+1} + \varepsilon}}\right) = \pi.
    \]
\end{lemma}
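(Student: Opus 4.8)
The plan is to apply Lemma \ref{lemma:equiv_cond} directly to the matrix $B := A + \varepsilon I_n$, using the structure of the normal form to make the sign choices $a_i$ explicit.

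First I would check that $B \in \mathcal{P}(C_n)$, so that Lemma \ref{lemma:equiv_cond} applies. Since $n > 3$, the only fully specified principal submatrices are the $2 \times 2$ blocks on edges. For an edge $\{i, i+1\}$, the block of $B$ is obtained from that of $A$ by adding $\varepsilon$ to each diagonal entry. Because $A$ is in normal form its edge blocks are singular, i.e. $A_{ii}A_{i+1i+1} - A_{ii+1}^2 = 0$; adding $\varepsilon \ge 0$ to the diagonal only increases the determinant, so each block of $B$ has nonnegative determinant and nonnegative diagonal and is therefore PSD. For $\varepsilon > 0$ the diagonal entries $A_{ii} + \varepsilon$ are strictly positive, so all of the quantities below are well defined.

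Next I would record the normalized off-diagonal quantities that enter $(*)$ for $B$. Set
\[
    \phi_i := \arccos\left(\frac{\sqrt{A_{ii}A_{i+1i+1}}}{\sqrt{A_{ii}+\varepsilon}\sqrt{A_{i+1i+1}+\varepsilon}}\right),
\]
which are exactly the summands in the hypothesis. The normal form pins down the signs of the off-diagonal entries: every $A_{ii+1}$ equals $+\sqrt{A_{ii}A_{i+1i+1}}$ except for exactly one edge, which after relabeling we may take to be $\{n, 1\}$, where $A_{n1} = -\sqrt{A_{nn}A_{11}}$. Hence for each positive edge the $\arccos$ term of $B$ equals $\phi_i$, while for the single negative edge the identity $\arccos(-x) = \pi - \arccos(x)$ gives that the $\arccos$ term equals $\pi - \phi_n$. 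All arguments lie in $[-1,1]$, so these are valid inputs to Lemma \ref{lemma:equiv_cond}.

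Finally I would choose $a_i = 1$ for every positive edge and $a_n = -1$ for the negative edge. With this choice the signed sum in $(*)$ evaluated at $B$ becomes
\[
    \sum_{i=1}^{n-1} \phi_i - (\pi - \phi_n) = \sum_{i=1}^n \phi_i - \pi.
\]
Under the hypothesis $\sum_{i=1}^n \phi_i = \pi$ this equals $0 = 2 \cdot 0 \cdot \pi$, so $(*)$ holds with $k = 0$ and Lemma \ref{lemma:equiv_cond} yields a rank $2$ PSD completion of $A + \varepsilon I_n$. The argument is essentially this bookkeeping; the only step requiring care is correctly converting the single negative entry of the normal form into the $\pi - \phi_n$ contribution via $\arccos(-x) = \pi - \arccos(x)$ and verifying that $B$ lies in $\mathcal{P}(C_n)$ so that the earlier lemma is legitimately applicable.
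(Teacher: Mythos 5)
Your proposal is correct and follows essentially the same route as the paper: apply Lemma \ref{lemma:equiv_cond} to $A+\varepsilon I_n$ with signs $a_i=1$ on the positive edges and $a_n=-1$ on the unique negative edge, then use $\arccos(-x)=\pi-\arccos(x)$ so that the hypothesis $\sum_i \phi_i = \pi$ makes the signed sum in $(*)$ vanish with $k=0$. Your explicit check that $A+\varepsilon I_n \in \mathcal{P}(C_n)$ (so that Lemma \ref{lemma:equiv_cond} legitimately applies) is a small point of care that the paper leaves implicit.
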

\begin{proof}
    This follows from lemma \ref{lemma:equiv_cond}: after considering the sum
    \begin{align*}
        \sum_{i=1}^{n-1} \arccos\left(\frac{A_{ii+1}}{\sqrt{A_{ii} + {\color{black}\varepsilon}}\sqrt{A_{i+1i+1}+{\color{black}\varepsilon}}}\right) - 
        \arccos\left(\frac{A_{1n}}{\sqrt{A_{11} + {\color{black}\varepsilon}}\sqrt{A_{nn} + {\color{black}\varepsilon}}}\right) \\
        =
        \sum_{i=1}^{n-1} \arccos\left(\frac{\sqrt{A_{ii}A_{i+1i+1}}}{\sqrt{A_{ii} + {\color{black}\varepsilon}}\sqrt{A_{i+1i+1}+{\color{black}\varepsilon}}}\right) - 
        \arccos\left(-\frac{\sqrt{A_{11}A_{nn}}}{\sqrt{A_{11} + {\color{black}\varepsilon}}\sqrt{A_{nn} + {\color{black}\varepsilon}}}\right) \\
        =
        \sum_{i=1}^{n} \arccos\left(\frac{\sqrt{A_{ii}A_{i+1i+1}}}{\sqrt{A_{ii} + {\color{black}\varepsilon}}\sqrt{A_{i+1i+1}+{\color{black}\varepsilon}}}\right) - 
        \pi\\
        = 0 \change{.}
    \end{align*}
    where we have used the equation $\arccos(-x) = \pi - \arccos(x)$, and the hypothesis of the lemma.
    
\end{proof}
The last, somewhat mysterious fact we will need is the following:
\begin{lemma}\label{lemma:convexity}
    For any ${\color{black}\varepsilon} \ge 0$, the function $f_{\varepsilon}:\R^2_+ \rightarrow \R$ given by 
    \[
        f_{\varepsilon}(x, y) = \arccos(\frac{\sqrt{xy}}{\sqrt{x + {\color{black}\varepsilon}}\sqrt{y + {\color{black}\varepsilon}}})
    \]
    is convex.
\end{lemma}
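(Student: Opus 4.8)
The plan is to reduce $f_{\varepsilon}$ to a separable form and then verify by hand that its Hessian is positive semidefinite, the key being that the one genuinely nontrivial inequality collapses to something obvious. The case $\varepsilon = 0$ is immediate, since then the argument of $\arccos$ is identically $1$ and $f_0 \equiv 0$ is convex; so assume $\varepsilon > 0$ and work on the interior $x, y > 0$, with convexity on the boundary following by continuity. Squaring the $\arccos$ argument gives $\cos^2 f_{\varepsilon} = \frac{xy}{(x+\varepsilon)(y+\varepsilon)}$, and taking logarithms yields the separable identity
\[
    -\log\cos^2 f_{\varepsilon}(x,y) = \log\!\left(1 + \tfrac{\varepsilon}{x}\right) + \log\!\left(1 + \tfrac{\varepsilon}{y}\right).
\]
Writing $\Phi(t) = -\log\cos^2 t$ and $R(s) = \log(1 + \varepsilon/s)$, this says $\Phi(f_{\varepsilon}) = R(x) + R(y)$, so $f_{\varepsilon}(x,y) = g(R(x) + R(y))$ with $g = \Phi^{-1}$. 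On the relevant range $f_{\varepsilon} \in (0, \pi/2)$ one records the sign data: $R' < 0$ and $R'' > 0$, so $R$ is convex and decreasing; $\Phi$ is convex and increasing, so its inverse $g$ is increasing and concave, giving $g' > 0$ and $g'' < 0$.

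Next I would write the Hessian $H$ of the composite $f_{\varepsilon} = g(R(x)+R(y))$, whose entries are $f_{xx} = g'' R'(x)^2 + g' R''(x)$, $f_{yy} = g'' R'(y)^2 + g' R''(y)$, and $f_{xy} = g'' R'(x) R'(y)$. The difficulty is that $g'' < 0$: the rank-one contribution from $g''$ works against the convexity supplied by $R'' > 0$, so positive semidefiniteness is not formal and must be checked. Since $f_{\varepsilon}$ is symmetric in $x$ and $y$, it suffices to prove $f_{xx} \ge 0$ and $\det H \ge 0$. Both become tractable after substituting the explicit evaluations $\frac{R'(s)^2}{R''(s)} = \frac{\varepsilon}{2s + \varepsilon}$ and, using $\sin^2 f_{\varepsilon} = \frac{\varepsilon(x+y+\varepsilon)}{(x+\varepsilon)(y+\varepsilon)}$, the ratio $\frac{-g''}{g'} = \frac{1}{2\sin^2 f_{\varepsilon}}$.

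I expect the determinant to be the main obstacle, since this is where the sign conflict is genuine. Expanding $\det H$ and dividing by the positive quantity $(g')^2 R''(x) R''(y)$, the condition $\det H \ge 0$ is equivalent to
\[
    \frac{-g''}{g'}\left(\frac{R'(x)^2}{R''(x)} + \frac{R'(y)^2}{R''(y)}\right) \le 1.
\]
Substituting the evaluations above, the factor $x + y + \varepsilon$ cancels and the left-hand side simplifies exactly to $\frac{(x+\varepsilon)(y+\varepsilon)}{(2x+\varepsilon)(2y+\varepsilon)}$, so the inequality collapses to the manifestly true $(x+\varepsilon)(y+\varepsilon) \le (2x+\varepsilon)(2y+\varepsilon)$. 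The diagonal inequality $f_{xx} \ge 0$ unwinds the same way, to $2\sin^2 f_{\varepsilon} \ge \frac{\varepsilon}{2x+\varepsilon}$, which clears denominators to a polynomial inequality with nonnegative coefficients. A $2 \times 2$ symmetric matrix with nonnegative diagonal and nonnegative determinant is positive semidefinite, so $H \succeq 0$ and $f_{\varepsilon}$ is convex.
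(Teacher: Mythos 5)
Your proof is correct, and it takes a genuinely different route from the paper's. The paper proves the same second-order criterion (nonnegative diagonal entries plus nonnegative determinant of the $2\times 2$ Hessian), but does so by brute force: it displays the explicit Hessian entries and the closed-form determinant
\[
\det H(f_{\varepsilon}) \;=\; \frac{\varepsilon\,(\varepsilon(x+y)+3xy)}{4xy(\varepsilon+x)^2(\varepsilon+y)^2(\varepsilon+x+y)},
\]
and simply observes that everything in sight is nonnegative --- a computation one would realistically only trust coming out of a computer algebra system. Your structural reduction replaces that black box: the identity $-\log\cos^2 f_{\varepsilon} = \log(1+\varepsilon/x)+\log(1+\varepsilon/y)$ exhibits $f_{\varepsilon}=g(R(x)+R(y))$ with $g=\Phi^{-1}$, and then every ingredient is a one-line derivative ($R'(s)=-\varepsilon/(s(s+\varepsilon))$, $R''(s)=\varepsilon(2s+\varepsilon)/(s^2(s+\varepsilon)^2)$, hence $R'^2/R''=\varepsilon/(2s+\varepsilon)$; and $\Phi'=2\tan t$, $\Phi''=2\sec^2 t$, hence $-g''/g'=1/(2\sin^2 f_{\varepsilon})$). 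The determinant condition then collapses to $(x+\varepsilon)(y+\varepsilon)\le(2x+\varepsilon)(2y+\varepsilon)$, and indeed your factorization reproduces the paper's numerator, since $(2x+\varepsilon)(2y+\varepsilon)-(x+\varepsilon)(y+\varepsilon)=3xy+\varepsilon(x+y)$ --- a nice consistency check. What each approach buys: the paper's is shorter to state if one accepts machine-generated formulas; yours is fully hand-verifiable, explains \emph{why} convexity holds (logarithmic separability of the $\cos^2$, with the concavity of $g$ fighting, and losing to, the convexity of $R$), and is also slightly more careful than the paper about the degenerate cases ($\varepsilon=0$ and the boundary of $\R^2_+$, where the paper's determinant formula requires $x,y>0$). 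The only point worth stating explicitly when writing this up is the standard fact you implicitly use twice: that the inverse of an increasing convex bijection is concave, and that continuity extends convexity from the interior to the closed quadrant.
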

\begin{proof}
    We prove this by computing the Hessian matrix of $f_{\varepsilon}$.
    \[
        H(f_{\varepsilon}) = 
        \begin{pmatrix}
            \pard{^2}{x^2}f_{\varepsilon}&
            \pard{^2}{x\partial y}f_{\varepsilon}\\
            \pard{^2}{x\partial y}f_{\varepsilon}&
            \pard{^2}{y^2}f_{\varepsilon}&
        \end{pmatrix}.
    \]
    This evaluates to
    \[
        \left(
\begin{array}{cc}
    \frac{{\color{black}\varepsilon}^2 y^2 \left({\color{black}\varepsilon}^2+{\color{black}\varepsilon} (5 x+y)+x (4 x+3 y)\right)}{4 ({\color{black}\varepsilon}+x)^{7/2} ({\color{black}\varepsilon}+y)^{3/2} (x y)^{3/2} \left(\frac{{\color{black}\varepsilon} ({\color{black}\varepsilon}+x+y)}{({\color{black}\varepsilon}+x) ({\color{black}\varepsilon}+y)}\right)^{3/2}} & -\frac{{\color{black}\varepsilon}^2}{4 ({\color{black}\varepsilon}+x)^{3/2} ({\color{black}\varepsilon}+y)^{3/2} \sqrt{x y} \left(\frac{{\color{black}\varepsilon} ({\color{black}\varepsilon}+x+y)}{({\color{black}\varepsilon}+x) ({\color{black}\varepsilon}+y)}\right)^{3/2}} \\
    -\frac{{\color{black}\varepsilon}^2}{4 ({\color{black}\varepsilon}+x)^{3/2} ({\color{black}\varepsilon}+y)^{3/2} \sqrt{x y} \left(\frac{{\color{black}\varepsilon} ({\color{black}\varepsilon}+x+y)}{({\color{black}\varepsilon}+x) ({\color{black}\varepsilon}+y)}\right)^{3/2}} & \frac{{\color{black}\varepsilon}^2 x^2 \left({\color{black}\varepsilon}^2+{\color{black}\varepsilon} (x+5 y)+y (3 x+4 y)\right)}{4 ({\color{black}\varepsilon}+x)^{3/2} ({\color{black}\varepsilon}+y)^{7/2} (x y)^{3/2} \left(\frac{{\color{black}\varepsilon} ({\color{black}\varepsilon}+x+y)}{({\color{black}\varepsilon}+x) ({\color{black}\varepsilon}+y)}\right)^{3/2}} \\
\end{array}
\right).
    \]
    Note that if ${\color{black}\varepsilon}, x, y \ge 0$, then the diagonal entries of this matrix are nonnegative.

Consider the Hessian determinant of this function if $x, y > 0$: 
\[
    \det(H(f_{\varepsilon})) = \frac{{\color{black}\varepsilon} ({\color{black}\varepsilon} (x+y)+3 x y)}{4 x y ({\color{black}\varepsilon}+x)^2 ({\color{black}\varepsilon}+y)^2 ({\color{black}\varepsilon}+x+y)}.
\]
This is also nonnegative on this domain.

These two facts about $H(f_{\varepsilon})$ are enough to determine that it is positive semidefinite, and so $f$ is convex.
\end{proof}

\begin{proof}[Proof of Theorem \ref{thm:cycle}]
    Fix some $A$ in normal form. We wish to show that there is some ${\color{black}\varepsilon} \le \frac{1}{n}(\frac{1}{\cos(\frac{\pi}{n})} - 1)$ so that $A + \varepsilon I_n$ is completable to a PSD matrix with rank 2. We want to apply the condition in lemma \ref{lemma:simpler_equiv_cond}. Consider the function 
    \[
        g(\varepsilon) = \sum_{i=1}^{n}\arccos\left(\frac{\sqrt{A_{ii}A_{i+1i+1}}}{\sqrt{A_{ii} + {\color{black}\varepsilon}}\sqrt{A_{i+1i+1} + {\color{black}\varepsilon}}}\right).
    \]
    Lemma \ref{lemma:simpler_equiv_cond} implies if $\varepsilon$ is such that $g(\varepsilon) = \pi$, then in fact there is a rank 2 PSD completion of $A + \varepsilon I_n$.
    We will show that $g(0) = 0$ and $g(\frac{1}{n}(\frac{1}{\cos(\frac{\pi}{n})} - 1)) \ge \pi$, so that by the intermediate value theorem, there must be $\varepsilon \in [0, \frac{1}{n}(\frac{1}{\cos(\frac{\pi}{n})} - 1)]$ so that $g(\varepsilon) = \pi$, yielding the result.

    First note that if $\varepsilon = 0$, then 
    \[
        \sum_{i=1}^{n}\arccos\left(\frac{\sqrt{A_{ii}A_{i+1i+1}}}{\sqrt{A_{ii} + {\color{black}\varepsilon}}\sqrt{A_{i+1i+1} + {\color{black}\varepsilon}}}\right) =
        \sum_{i=1}^{n}\arccos(1) = 0 < \pi\change{.}
    \]
    
    Now, we want to show that if $\varepsilon = \frac{1}{n}(\frac{1}{\cos(\frac{\pi}{n})} - 1)$, then $g(\varepsilon) \ge \pi$.  We use lemma \ref{lemma:convexity} and the fact that $\tr(A) = 1$ to see that
    \begin{align*}
        g(\varepsilon) &= n\sum_{i=1}^n \frac{1}{n}f_{\varepsilon}\left(A_{ii}, A_{i+1i+1}\right)\\
                       & \ge nf_{\varepsilon}\left(\frac{1}{n}\sum_{i=1}^n A_{ii}, \frac{1}{n}\sum_{i=1}^n A_{ii}\right)\\
                       &= nf_{\change{\varepsilon}}\left(\frac{1}{n}, \frac{1}{n}\right)\\
                       &= n\arccos(\frac{1}{1 + n\varepsilon})\\
                       &= n\arccos(\cos(\frac{\pi}{n}))\\
                       &= \pi,
    \end{align*}
    as desired.

    Thus, there is some $\varepsilon \le \frac{1}{n}(\frac{1}{\cos(\frac{\pi}{n})} - 1)$ satisfying the condition of lemma \ref{lemma:simpler_equiv_cond}.

    To see that this value of $\varepsilon$ is in fact attained for some $A$, we can use the matrix $A$ where $A_{ii} = \frac{1}{n}$ for each $i$, $A_{12} = -\frac{1}{n}$, and each other specified off-diagonal entry is $\frac{1}{n}$. This matrix can be verified to have $\varepsilon_G(A) = \frac{1}{n}(\frac{1}{\cos(\frac{\pi}{n})} - 1)$ using the cycle conditions found in \cite{BARRETT19933}.
\end{proof}

We note that asymptotically $\frac{1}{n}\left( \frac{1}{\cos(\frac{\pi}{n})} - 1 \right)$ is $O(\frac{1}{n^3})$.
\subsection{Relative volumes of Nonnegative and Sums of Squares Cones for Cycles}
It is worth noting that this bound on the conical distance is strong enough to give us a bound on the relative volumes of slices $\bar{\mathcal{P}}(C_n)$ and $\bar{\Sigma}(C_n)$. By using the connection of conical distance with expansion ratio we see that:
\[
    {\color{black}\left(\bar{\Sigma}(C_n) - \frac{1}{n}I_n\right) \subseteq \left(\bar{\mathcal{P}}(C_n) - \frac{1}{n}I_n\right) \subseteq (1+n\varepsilon(C_n))\left(\bar{\Sigma}(C_n) - \frac{1}{n}I_n\right).}
\]
Since the dimension of $\Sym(C_n)$ is $2n$,
\[
    {\color{black}\operatorname{Vol}(\bar{\Sigma}(C_n)) \leq \operatorname{Vol}(\bar{\mathcal{P}}(C_n)) \leq (1+n\varepsilon(C_n))^{2n}\operatorname{Vol}(\bar{\Sigma}(C_n)).}
\]
Now, note that $\varepsilon(C_n) = O(\frac{1}{n^3})$, so we have that 
\[
    {\color{black}(1+n\varepsilon(C_n))^{2n} \le e^{O(1 / n)},}
\]
which approaches 1 as $n$ goes to infinity.

\subsection{Cycle Completable Graphs}

The following characterization of cycle-completable graphs proved in  \cite{MR1342017} allows us to find exact conical distance for a larger class of graphs.

\begin{theorem}\label{thm:seri_p}
    Let $G$ be a graph, let $\Gamma(G)$ be the set of matrices $A$ so that for each induced cycle $C \subseteq G$, $\pi_C(A) \in \Sigma(C)$. Then $\Sigma(G) = \mathcal{P}(G) \cap \Gamma(G)$ iff $G$ is the clique sum of series parallel and chordal graphs.
\end{theorem}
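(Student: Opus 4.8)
The plan is to prove both directions by reducing to the \emph{atoms} of $G$ (the pieces appearing when $G$ is repeatedly split along clique separators), exploiting that all three cones $\Sigma$, $\mathcal{P}$, and $\Gamma$ decompose over clique sums. The inclusion $\Sigma(G) \subseteq \mathcal{P}(G) \cap \Gamma(G)$ holds for every graph, since a PSD completion of $A$ restricts to a PSD matrix on each clique and to a PSD completion on each induced cycle; so the content is the reverse inclusion. The gluing lemma of Section~3.1 gives $A \in \Sigma(G \oplus H)$ iff $\pi_G(A) \in \Sigma(G)$ and $\pi_H(A) \in \Sigma(H)$. The same decomposition holds for $\mathcal{P}$ (every maximal clique of $G \oplus H$ lies in $G$ or in $H$) and for $\Gamma$ (an induced cycle meets the shared clique in at most two vertices, which must then be consecutive, forcing the whole cycle to one side). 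Hence $\Sigma = \mathcal{P} \cap \Gamma$ holds for $G \oplus H$ iff it holds for $G$ and for $H$; a failure on a summand lifts to $G \oplus H$ by completing the other side to any PSD matrix agreeing on the shared clique, which exists since that block is already PSD. This reduces the theorem to showing that the property holds for an atom iff that atom is chordal or series-parallel, together with the structural fact that $G$ is a clique sum of series-parallel and chordal graphs iff each atom is series-parallel or chordal.

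For the backward direction, a chordal atom is a complete graph, where $\Gamma$ is vacuous and $\Sigma = \mathcal{P}$ by Grone et al.~\cite{GRONE1984109}. A series-parallel atom I would handle by induction along its series-parallel decomposition. The base case is a single induced cycle $C$, where the only induced cycle is $C$ itself, so $\mathcal{P}(C) \cap \Gamma(C) = \Gamma(C) = \Sigma(C)$ by definition. In the inductive step one splits the atom along a two-element separator $\{u,v\}$; since an atom has no clique separator, $\{u,v\}$ is a \emph{non-edge}, so the $uv$-entry is unspecified. Given $A \in \mathcal{P} \cap \Gamma$, the two sides are completable by induction, and one must choose a common value of the unspecified $uv$-entry for which both completions exist simultaneously. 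The induced cycles running through $u$ and $v$ now span both sides, and the hypothesis $A \in \Gamma$ (completability of exactly these cross-cycles), together with the cycle-completion characterization of Barrett et al.~\cite{BARRETT19933}, is what forces a common admissible value; rotating one side's vector arrangement as in the proof of Theorem~\ref{thm:csum} then yields a global completion.

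For the forward direction, suppose some atom $G_0$ is neither chordal nor series-parallel. By the characterization of \cite{MR1342017}, $G_0$ then contains a wheel or a splitting of a wheel, and it suffices to exhibit a partial matrix in $\mathcal{P}(G_0) \cap \Gamma(G_0) \setminus \Sigma(G_0)$, since by the exact clique-sum decomposition above this failure propagates to $G$. For the wheel $W_n = \hat{C}_n$ I would construct the witness explicitly: put on the rim a partially positive $C_n$-matrix whose only PSD completions have rank $2$, and choose the hub entries so that every triangle through the hub and the rim cycle are each individually completable, while a simultaneous completion would force the rank-$2$ rim arrangement to be realized by vectors all having a common inner product with a single hub vector---impossible for the chosen rim. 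The cleanest packaging is dual: one exhibits an extreme ray of $\Sigma^*(G_0)$ of rank at least $3$ that vanishes on every clique and on every induced cycle, which is exactly the type of obstruction that the chordal-girth rank bound of Section~4.1 shows is \emph{absent} when the only non-chordal structure consists of isolated cycles.

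The main obstacle I anticipate is the series-parallel inductive step, specifically gluing across a non-adjacent two-separator: one must show that the intervals of feasible $uv$-entries produced by the two sides overlap, and this is precisely where the global cycle conditions encoded by $\Gamma$---rather than mere positivity of the clique blocks recorded in $\mathcal{P}$---become indispensable. A secondary difficulty is producing the witness for the forward direction uniformly over the infinite family of wheel-splitting obstructions; organizing it through the dual rank bound, rather than building an ad hoc matrix for each obstruction, is the route I expect to be cleanest.
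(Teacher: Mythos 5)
A preliminary but important point: the paper contains no proof of this statement at all. Theorem \ref{thm:seri_p} is imported verbatim from \cite{MR1342017} (Barrett--Johnson--Loewy), where it is the main result of a long memoir; the paper only uses it as a black box in the proof of Theorem \ref{thm:main}. So your proposal cannot be measured against an internal argument --- it has to stand on its own as a proof of a substantial theorem from the literature, and in that light it is an outline with the two essential steps missing. The parts you do carry out are correct: the localization of $\Sigma$, $\mathcal{P}$, and $\Gamma$ over a clique sum (every induced cycle and every maximal clique of $G \oplus H$ lies wholly in one summand, and the gluing lemma behind Theorem \ref{thm:csum} handles $\Sigma$), the lifting of a witness from a summand, and the observation that a chordal atom is complete (Dirac). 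But these are the routine reductions; they reduce the theorem to exactly the two claims that constitute its actual content, and both are left as acknowledged ``obstacles.''

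Concretely: (a) In the backward direction, the series-parallel case is not proved. Your induction splits an atom along a nonadjacent $2$-separator $\{u,v\}$ and asserts that the two closed intervals of feasible values for the unspecified $uv$-entry must overlap because the cross-cycles are individually completable. Nothing in the proposal shows this; the feasible interval of a side is a global function of that whole side, not of its cycles one at a time, and there may be many cross-cycles interacting through the same entry. Moreover, applying the inductive hypothesis to a side with the virtual edge $uv$ adjoined requires membership in $\Gamma(G_i + uv)$, i.e.\ completability of induced cycles through the not-yet-chosen entry $t$, so the induction as stated is circular unless reformulated. This interval-overlap statement \emph{is} the theorem, and proving it is what occupies most of \cite{MR1342017}. (b) In the forward direction, you need a matrix in $\mathcal{P} \cap \Gamma \setminus \Sigma$ for every atom that is neither chordal nor series parallel, i.e.\ for wheels and all their splittings. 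The wheel witness is only sketched (``choose the hub entries so that \dots a simultaneous completion would force \dots --- impossible''), and the proposed dual packaging does not substitute for a construction: the chordal-girth rank bound of Section 4.1 constrains extreme points of $\Sigma^*(G)$ lying outside $\mathcal{P}^*(G)$, whereas $\Sigma(G) \subsetneq \mathcal{P}(G) \cap \Gamma(G)$ requires exhibiting an extreme ray of $\Sigma^*(G)$ outside the closure of $\mathcal{P}^*(G) + \Gamma^*(G)$, which that bound does not provide. Until (a) and (b) are filled in, what you have is a correct reduction to the known hard core of Barrett--Johnson--Loewy, not a proof.
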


We are now ready to extend the result on exact conical distance between $\mathcal{P}(G)$ and $\Sigma(G)$ from cycles to a larger class of graphs using our previous results and the above theorem.

\begin{theorem}\label{thm:main}
    Let $\mathcal{G}$ be the smallest class of graphs which is closed under the clique sum operation and the cone operation, and which contains all complete graphs and series parallel graphs. Then for any $G \in \mathcal{G}$, $\varepsilon(G) = \varepsilon(C_g)$, where $g$ is the chordal girth of $G$.
\end{theorem}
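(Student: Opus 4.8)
The plan is to proceed by structural induction on the construction of $G$ within $\mathcal{G}$, treating the two base cases (complete and series parallel graphs) directly and the two closure operations (clique sum and cone) by means of Theorems \ref{thm:csum} and \ref{thm:join}. Throughout I write $g$ for the chordal girth of the graph under consideration, and I first record two facts I will use repeatedly: that $\varepsilon(C_n)$ is a \emph{decreasing} function of $n$ for $n \geq 4$ (a direct calculus check on the closed form in Theorem \ref{thm:cycle}), and the monotonicity in Lemma \ref{lemma:subgraph}. The lower bound $\varepsilon(G) \geq \varepsilon(C_g)$ is then immediate in every case, since the smallest induced cycle $C_g$ of length at least $4$ is an induced subgraph of $G$; all the work is in the matching upper bound.

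For the base cases, complete graphs are chordal, so $\varepsilon(G) = 0$ and $g = \infty$, with $\varepsilon(C_\infty)$ read as $0$. The crucial base case is a series parallel graph $G$, where I invoke Theorem \ref{thm:seri_p} to write $\Sigma(G) = \mathcal{P}(G) \cap \Gamma(G)$. Given $A \in \bar{\mathcal{P}}(G)$, I want $A + \varepsilon(C_g) I_n \in \Sigma(G)$, which reduces to two checks. Membership in $\mathcal{P}(G)$ is clear, since adding a nonnegative multiple of the identity preserves positivity of every fully specified principal submatrix. For membership in $\Gamma(G)$, I verify $\pi_C(A + \varepsilon(C_g)I_n) \in \Sigma(C)$ for each induced cycle $C$: triangles are handled by $\Sigma(C_3) = \mathcal{P}(C_3)$, while for an induced $m$-cycle with $m \ge g$ I use that $\pi_C(A) \in \mathcal{P}(C_m)$ with $\tr(\pi_C(A)) \le \tr(A) = 1$, so the defining property of $\varepsilon(C_m)$ gives $\pi_C(A) + \varepsilon(C_m) I \in \Sigma(C_m)$; then $\varepsilon(C_m) \le \varepsilon(C_g)$ together with closure of $\Sigma(C_m)$ under adding PSD matrices yields $\pi_C(A) + \varepsilon(C_g) I \in \Sigma(C_m)$.

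For the inductive steps I combine the operation theorems with bookkeeping on the chordal girth. For a clique sum $G = G_1 \oplus G_2$, the key combinatorial lemma is that every induced cycle of length at least $4$ lies entirely in $G_1$ or entirely in $G_2$: the shared clique $K$ is a separator, a crossing cycle would meet $K$ in at least two vertices, which are adjacent and hence must be consecutive to avoid a chord, but then the rest of the cycle is a single path staying on one side, a contradiction. Hence $g = \min\{g_1, g_2\}$, and Theorem \ref{thm:csum} together with the induction hypothesis and the monotonicity of $\varepsilon(C_n)$ gives $\varepsilon(G) = \max\{\varepsilon(C_{g_1}), \varepsilon(C_{g_2})\} = \varepsilon(C_g)$. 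For a cone $G = \hat{H}$, the apex $v^*$ is adjacent to every vertex, so it cannot lie on any induced cycle of length at least $4$ (it would have a chord to a non-neighbor on the cycle), while induced cycles of $H$ survive intact; thus the chordal girth is unchanged and Theorem \ref{thm:join} with the induction hypothesis gives $\varepsilon(G) = \varepsilon(H) = \varepsilon(C_g)$.

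I expect the main obstacle to be the series parallel base case, since it is the only place where the cycle computation (Theorem \ref{thm:cycle}) enters through the structural result Theorem \ref{thm:seri_p}, and where the trace normalization $\tr(\pi_C(A)) \le 1$ and the monotonicity of $\varepsilon(C_n)$ must be combined correctly. The closure steps are comparatively routine once the two chordal girth identities ($g = \min\{g_1,g_2\}$ for clique sums, and invariance under coning) are established.
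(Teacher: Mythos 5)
Your proposal is correct and takes essentially the same route as the paper: the series parallel base case is handled via Theorem \ref{thm:seri_p} together with the trace bound and the monotonicity of $\varepsilon(C_n)$, and the closure operations are dispatched by Theorems \ref{thm:csum} and \ref{thm:join}. The only difference is that you explicitly verify the chordal girth bookkeeping (that $g=\min\{g_1,g_2\}$ under clique sums and that coning preserves chordal girth), which the paper leaves implicit in the phrase ``the theorem then follows from applying Theorems \ref{thm:csum} and \ref{thm:join}''; this is a welcome addition but not a different argument.
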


\begin{proof}

It suffices to show that $\varepsilon(G) = \varepsilon(C_g)$ for any series parallel graph $G$. The theorem then follows from applying Theorems \ref{thm:csum} and \ref{thm:join}.

 If $g$ is the chordal girth of $G$, then there is no cycle in $G$ with length greater than 3 and less than $g$.
 Moreover, note that the conical distance for $C_g$ is monotonic decreasing in $g$. 
 
If $C$ is any cycle contained in $G$, then $\varepsilon(C) \le \varepsilon(C_g)$. Thus, if $A \in \mathcal{P}(G)$, then for any cycle $C$ in $G$, $\pi_C(A + \varepsilon(C_g) I_n) = \pi_C(A) + \varepsilon(C_g)I_n \in \Sigma(C)$.Thus, $A + \varepsilon(C_g)I_n \in \mathcal{P}(G)$, and moreover, for all cycles $C$ in $G$, $\pi_C(A + \varepsilon(C_g)I_n) \in \Sigma(C)$, so by Theorem \ref{thm:seri_p}, we have that $A + \varepsilon(C_g)I_n \in \Sigma(G)$, as desired.
\end{proof}

It is noteworthy that the class of graphs described in \cite{MR1342017} has the wheels, $W_n = \hat{C_n}$, as its forbidden minors, yet the conical distance for the wheel graphs goes to 0 at the rate $O(\frac{1}{n^3})$ as $n$ grows, which implies small expansion ratio and good volume approximation for $\bar{\Sigma}(G)$ and $\bar{\mathcal{P}}(G)$.

\section{Applications to Semidefinite Programming}
Conical distance can be used to bound approximation errors in some semidefinite programs. Consider a semidefinite program of the form 

\begin{equation*}
\begin{aligned}
    \text{minimize} &&\langle B^0,X\rangle\\
    \text{such that } &&\tr(X) = 1,\\
                      &&\langle B^1,X\rangle = b_1,\\
                      &&\dots\\
                      &&\langle B^k,X\rangle = b_k,\\
                      &&  X \succeq 0\change{.}
\end{aligned}\tag{SDP}
\end{equation*}
We will say that this program is $G$-sparse, for some graph $G$ if for each $\ell$, $B^{\ell}_{ij} = 0$ for all $\{i,j\} \not \in E(G)$.

If this semidefinite program is $G$-sparse, then the semidefinite program is equivalent to one where the condition $X\succeq 0$ is replaced by $X \in \Sigma(G)$ without losing information.

Consider replacing the condition that $X \in \Sigma(G)$ with the condition that $X \in \mathcal{P}(G)$. 

\begin{equation*}
\begin{aligned}
    \text{minimize} &&\langle B^0,X\rangle\\
    \text{such that } &&\tr(X) = 1,\\
                      &&\langle B^1,X\rangle = b_1,\\
                      &&\dots\\
                      &&\langle B^k,X\rangle = b_k,\\
      &&  X \in \mathcal{P}(G)\change{.}
\end{aligned}\tag{SDP'}
\end{equation*}

The condition $X \in \mathcal{P}(G)$ can be defined while only considering the entries of $X$ which correspond to edges in $G$. If $G$ is a graph in our class $\mathcal{G}$, which has $o(n^2)$ edges, and has no small cycles of size greater than 3, then the resulting savings in terms of the number of variables needed to define the program can be large while not costing too much in terms of the approximation factor.

Let $\alpha$ be the value of (SDP), and $\alpha'$ be the value of (SDP')
\begin{theorem}\label{thm:SDP}
    If $\frac{1}{n} I_n$ is a feasible point of (SDP), and (SDP) is $G$-sparse, then 
    \[
        {\color{black}\alpha' \le \alpha \le \frac{1}{1+n\varepsilon(G)}\alpha' + \frac{\varepsilon(G)}{1+n\varepsilon(G)}\tr(B^0).}
    \]
\end{theorem}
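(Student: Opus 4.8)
The plan is to treat (SDP') as a relaxation of (SDP) for the lower bound, and to \emph{round} an optimal point of (SDP') back into the feasible region of (SDP) for the upper bound. For the lower bound, first I would observe that because the program is $G$-sparse, both the objective $\langle B^0, X\rangle$ and each constraint functional $\langle B^\ell, X\rangle$ depend only on the entries of $X$ indexed by vertices and edges of $G$. Hence replacing the global constraint $X \succeq 0$ by $X \in \Sigma(G)$ leaves the program unchanged, so (SDP) is exactly the minimization over $X \in \Sigma(G)$. Since $\Sigma(G) \subseteq \mathcal{P}(G)$, the feasible region of (SDP') contains that of (SDP), and minimizing over a larger set can only decrease the optimum, giving $\alpha' \le \alpha$.

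For the upper bound I would exhibit a single feasible point of (SDP) whose objective value equals the claimed right-hand side. Because $\frac{1}{n}I_n$ is feasible for (SDP) it is also feasible for (SDP'), so the feasible set of (SDP') is nonempty; as $\bar{\mathcal{P}}(G)$ is compact (a slice of a pointed cone), (SDP') attains its minimum at some $X' \in \bar{\mathcal{P}}(G)$ with $\langle B^0, X'\rangle = \alpha'$. By the definition of conical distance, $X' + \varepsilon(G)I_n \in \Sigma(G)$, and since $\Sigma(G)$ is a cone the rescaled matrix
\[
    \tilde X = \frac{1}{1+n\varepsilon(G)}\bigl(X' + \varepsilon(G)I_n\bigr)
\]
again lies in $\Sigma(G)$ and satisfies $\tr(\tilde X) = 1$. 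Thus $\tilde X$ is $\Sigma(G)$-feasible, hence admits a PSD completion, and because the program is $G$-sparse this completion satisfies the linear constraints and realizes the same objective value as $\tilde X$.

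The crux of the argument, and where the hypothesis that $\frac{1}{n}I_n$ is feasible is essential, is verifying that $\tilde X$ satisfies the affine constraints $\langle B^\ell, \tilde X\rangle = b_\ell$. Feasibility of $\frac{1}{n}I_n$ forces $b_\ell = \frac{1}{n}\tr(B^\ell)$ for every $\ell$, equivalently $\tr(B^\ell) = n\,b_\ell$. Expanding,
\[
    \langle B^\ell, \tilde X\rangle = \frac{1}{1+n\varepsilon(G)}\bigl(\langle B^\ell, X'\rangle + \varepsilon(G)\tr(B^\ell)\bigr) = \frac{b_\ell + n\varepsilon(G)\,b_\ell}{1+n\varepsilon(G)} = b_\ell,
\]
so the additive shift by $\varepsilon(G)I_n$ followed by the rescaling exactly preserves each constraint. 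Applying the same computation to $B^0$ gives $\langle B^0, \tilde X\rangle = \frac{1}{1+n\varepsilon(G)}\alpha' + \frac{\varepsilon(G)}{1+n\varepsilon(G)}\tr(B^0)$, and since $\tilde X$ is feasible for (SDP) this value bounds $\alpha$ from above, completing the proof. I expect the constraint-preservation step to be the only genuine obstacle; everything else is bookkeeping about cones and traces.
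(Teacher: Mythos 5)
Your proof is correct and follows essentially the same route as the paper: both arguments take an optimizer $X'$ of (SDP'), use the conical distance to place $X' + \varepsilon(G)I_n$ in $\Sigma(G)$, rescale by $\frac{1}{1+n\varepsilon(G)}$ to recover trace $1$, and use feasibility of $\frac{1}{n}I_n$ to see that the resulting point (a convex combination of $X'$ and $\frac{1}{n}I_n$) satisfies the affine constraints, yielding the stated bound on $\alpha$. Your explicit verification via $\tr(B^\ell) = n b_\ell$ and your attention to the PSD completion step are just more detailed renderings of what the paper states in one line.
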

\begin{proof}
Since $\Sigma(G) \subseteq \mathcal{P}(G)$, we have that $\alpha \ge \alpha'$.

Let $X'$ be an optimum point for (SDP'). By the definition of the conical distance, 
\[
    X' + \varepsilon(G)I_n \in \Sigma(G).
\]

Then because $\Sigma(G)$ is a cone, we have the rescaled equation,
\[
    \frac{1}{1+n\varepsilon(G)}X' + \frac{n\varepsilon(G)}{1+n\varepsilon(G)}\left(\frac{1}{n}I_n\right) \in \Sigma(G).
\]

Note that this is a convex combination of $X'$ and $\frac{1}{n}I_n$, and thus, it satisfies all of the linear equations for the feasible set for (SDP). The optimal value therefore satisfies
\[
    \alpha \le \left\langle B^0,\left(\frac{1}{1+\varepsilon(G)}X' + \frac{\varepsilon(G)}{n(1+\varepsilon(G))}I_n\right) \right\rangle = \frac{1}{1+n\varepsilon(G)}\alpha' + \frac{\varepsilon(G)}{1+n\varepsilon(G)}\tr(B^0).
\]
\end{proof}

A key example of this type of semidefinite program is the rescaled Goemans and Williamson SDP (see \cite{goemans1995improved}) for approximating MAX-CUT for a graph $G$.
For consistency with the rest of the paper, we have written this as a minimization problem and reversed the sign of the objective function, so that the result is a negative number.
In that case, the definition of the semidefinite program is
\begin{equation*}
\begin{aligned}
    \text{minimize} &&n\sum_{i,j\in E(G), i \neq j} X_{ij}\\
    \text{such that } &&\forall i, X_{ii} = \frac{1}{n}\\
    &&  X \succeq 0\change{.}
\end{aligned}
\end{equation*}
This semidefinite program satisfies the conditions for the theorem, and moreover, we see that $\tr(B^0) = 0$, so the result is the cleaner 
\[
    \alpha' \le \alpha \le \frac{1}{1+n\varepsilon(G)}\alpha'.
\]

In applications, the input graph $G$ is not \change{necessarily} chordal. Rather, given an input graph, one attempts to find a \textbf{chordal cover} of $G$, i.e. a chordal graph $C$ so that $G$ is contained in $C$. Any $G$-sparse semidefinite program then is also $C$-sparse, and the results of \cite{chordal} apply. However, finding a chordal graph $C$ which minimizes the number of edges added to $E(C) \setminus E(G)$ is known to be NP-complete, even to approximate \cite{CAO2020104514}. Thus, it may be valuable in practice to relax the chordal condition to a somewhat more general condition, and settle for an approximation.

\subsection{Properties of the class $\mathcal{G}$}\label{sec:G}
\label{sec:graph_properties}
Graph theoretic questions about the graph class $\mathcal{G}$ are of practical interest, and we will briefly give some results here. One important property is that $\mathcal{G}$ is closed under taking induced subgraphs.


A \textbf{clique separator} of a graph $G$ is a clique $K \subseteq G$ with the property that removing $K$ from $G$ increases the number of connected components of the graph $G$. If $K$ is such a clique separator of $G$, we can write $G = H_1 \oplus H_2$, where $H_1$ and $H_2$ are induced subgraphs of $G$, and $K = H_1 \cap H_2$.

\change{A graph $H$ with no clique separators will be referred to as an \textbf{atom}.} By repeatedly finding clique separators of the graph $G$, we can decompose $G$ into the clique sum of \change{atoms: $G = H_1 \oplus H_2 \oplus \dots H_k$, where each $H_i$ is an atom.  We will refer to the $H_i$'s as the \textbf{atoms of the decomposition}, and when the decomposition is left implicit, we will refer to the $H_i$ as atoms of $G$.}
It is clear then that any graph $G$ is the clique sum of its atoms.
Note that the atoms of a graph depend on a particular decomposition of the graph $G$, and cannot be uniquely determined from the graph $G$.

We can characterize the possible atoms of a graph $G \in \mathcal{G}$.

\begin{theorem}
    If $G$ is a graph, then $G \in \mathcal{G}$ if and only if for any decomposition of $G$ into atoms, all atoms are of the form $H \wedge K_n$, where $H$ is series parallel and $H \wedge K_n$ is the $n$-fold repeated coning over the graph $H$.
\end{theorem}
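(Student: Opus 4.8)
The plan is to prove the reverse implication by a short closure argument and the forward implication by structural induction on the construction of $\mathcal{G}$, after strengthening the statement so that it speaks about \emph{all} clique-separator-free induced subgraphs rather than only atoms. This reformulation is the key move: it makes the induction close cleanly and lets us avoid reasoning about how the atoms of a clique sum assemble, which is delicate precisely because (as noted above) the atoms of a graph are not determined by the graph alone.

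For the direction ``atoms of the stated form $\implies G \in \mathcal{G}$'': fix any atom decomposition $G = H_1 \oplus \cdots \oplus H_k$ with each $H_i = S_i \wedge K_{n_i}$ and each $S_i$ series parallel. Each $S_i$ lies in $\mathcal{G}$ by definition of $\mathcal{G}$; since $H_i$ is the $n_i$-fold cone over $S_i$, closure under the cone operation gives $H_i \in \mathcal{G}$; and closure under clique sums then gives $G = H_1 \oplus \cdots \oplus H_k \in \mathcal{G}$.

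For the forward direction I would prove the following stronger claim by induction on the construction of $\mathcal{G}$: if $G \in \mathcal{G}$, then every connected induced subgraph $H$ of $G$ having no clique separator is of the form $S \wedge K_m$ with $S$ series parallel and $m \ge 0$. Since every atom is such a subgraph, this suffices. The base cases are immediate: an induced subgraph of $K_j$ is some $K_i = K_1 \wedge K_{i-1}$, and an induced subgraph of a series parallel graph is series parallel (hence $S \wedge K_0$), because series parallel graphs are minor-closed and therefore closed under induced subgraphs. For a clique sum $G = G_1 \oplus G_2$ glued along the clique $K = V(G_1) \cap V(G_2)$, the crucial lemma is that $H$ cannot meet both $V(G_1)\setminus K$ and $V(G_2)\setminus K$: by the definition of clique sum there are no edges of $G$ between those two sets, so $V(H) \cap K$ would be a clique separator of $H$, contradicting that $H$ has none. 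Hence $H$ is an induced subgraph of $G_1$ or of $G_2$, and the inductive hypothesis applies. For the cone $\hat G = G \wedge v^*$, I split on whether the apex $v^* \in V(H)$. If not, then $H$ is a clique-separator-free induced subgraph of $G$ and we finish by induction. If so, then $v^*$ is universal in $H$, so $H = H' \wedge K_1$ where $H' = H \setminus v^*$; moreover any clique separator $K'$ of $H'$ would lift to the clique separator $K' \cup \{v^*\}$ of $H$, so $H'$ is itself clique-separator-free. By induction $H' = S \wedge K_m$, and therefore $H = S \wedge K_{m+1}$.

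The main obstacle is making the induction close in the clique-sum step. A direct approach would try to assert that the atoms of $G_1 \oplus G_2$ are exactly the atoms of $G_1$ together with those of $G_2$, which is awkward given the non-uniqueness of atoms. Strengthening the inductive hypothesis from atoms to arbitrary clique-separator-free induced subgraphs removes this difficulty entirely: the containment lemma shows each such subgraph lives inside a single summand, so no statement about reassembling atoms is ever needed. The remaining verifications --- that no edge of $G$ crosses the gluing clique, that $v^*$ is universal in $H$ whenever it appears, and that clique separators lift through the cone --- are each short structural checks.
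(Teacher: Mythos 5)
Your proof is correct, and while your reverse direction coincides with the paper's (cone and clique-sum closure applied to the atoms), your forward direction takes a genuinely different route. The paper inducts on the number of atoms in the decomposition: it invokes the earlier-stated (but unproved) fact that $\mathcal{G}$ is closed under taking induced subgraphs to conclude that each atom lies in $\mathcal{G}$, and then analyzes how that atom can arise under the defining operations --- it cannot be a nontrivial clique sum, so the paper iteratively peels off cone vertices until a graph with no cone vertices and no clique separators remains, which must be series parallel (or a clique, forcing a single vertex). You instead run a structural induction on the construction of $G$ itself, with the strengthened hypothesis that \emph{every} connected clique-separator-free induced subgraph of $G$ is a repeated cone over a series parallel graph, and your containment lemma (such a subgraph of $G_1 \oplus G_2$ cannot straddle the gluing clique, since no edge crosses it) is what makes the clique-sum step close. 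The trade-off: your argument is self-contained --- the strengthened hypothesis carries through the induction exactly the information the paper imports via the unproved closure-under-induced-subgraphs claim, of which your containment lemma is the essential content --- while the paper's argument is shorter once that closure property is granted.

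Two small verifications are missing but easily supplied, and neither is a genuine gap. In the cone step, before applying the inductive hypothesis to $H' = H \setminus v^*$ you need $H'$ to be \emph{connected}; this holds because otherwise $\{v^*\}$ would itself be a clique separator of $H$ (your lifting argument with $K' = \emptyset$, made explicit). In the clique-sum step, if $H$ meets both $V(G_1)\setminus K$ and $V(G_2)\setminus K$ but $V(H) \cap K = \emptyset$, the contradiction is with connectedness of $H$ rather than with the absence of a clique separator, since removing the empty set separates nothing; your connectivity assumption covers this case, but the sentence as written attributes it to the wrong hypothesis.
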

\begin{proof}
    Suppose that $G$ is a graph with a \change{decomposition into atoms, each of which is of the form} $H \wedge K_n$ where $H$ is a series parallel graph. Since $H$ is series parallel, it is contained in $\mathcal{G}$, and thus the repeated coning over $H$ is contained in $\mathcal{G}$. So, the atoms of $G$ are contained in $\mathcal{G}$. Because $G$ is the clique sum of its atoms, and $\mathcal{G}$ is closed under clique sums, we see that $G$ is contained in $\mathcal{G}$.

    On the other hand, suppose that $G \change{\in \mathcal{G}}$. Let $G_1 \oplus G_2  \oplus\dots\oplus G_k$ be a decomposition of $G$ into a clique sum of atoms. Note that the partial sum $G_1 \oplus G_2  \oplus\dots\oplus G_{k-1}$ is an induced subgraph of $G$, and therefore, is also an element of $\mathcal{G}$, and thus, inductively, it suffices to show that $G_k$ is of the form $H \wedge K_n$.

    By definition, $G_k$ is an atom, and is an induced subgraph of $G$. In particular, $G_k$ is an element of $\mathcal{G}$, and $G_k$ is either series parallel, chordal, the cone over a graph in $\mathcal{G}$, or a clique sum of two graphs in $\mathcal{G}$. Clearly, if $G_k$ is the clique sum of two graphs, then it is not an atom, and thus, we can exclude this case.

    If $G_k = \hat{J}$ for some $J \in \mathcal{G}$, then we can also assume that $J$ does not have a clique separator, since if $J = J_1 \oplus J_2$, then $G_k$ would be $\hat{J_1} \oplus \hat{J_2}$. Thus, we can iteratively remove cone \change{vertices} until we are left with a graph with no cone vertices. In that case, $G_k = H \wedge K_n$, where $H \in \mathcal{G}$ has no clique separators or cone vertices. Thus, $H$ must be either chordal or series parallel. \change{If $H$ were chordal, then $H$ would be the clique sum of cliques. Clearly, if $H$ is the clique sum of cliques, and also an atom, then $H$ must be a clique. Moreover, if $H$ is a clique that contains no cone vertices, then $H$ must be a single vertex, and thus also series parallel.}
    This shows the desired result.
\end{proof}

A result of Tarjan in \cite{tarjan1985decomposition} shows that it is possible to \change{find a decomposition of a graph $G$ into at most $n-1$ atoms} in $O(nm)$ time, where $n = |V(G)|$ and $m = |E(G)|$ \change{(assuming $m > 0$)}. A result of Valdes, Tarjan and Lawler in \cite{valdes1979recognition} also shows that it is possible to recognize series parallel graphs in linear time. It is thus clear that given a graph $G$, we can recognize if it is in $\mathcal{G}$ by first decomposing $G$ into atoms, then checking that each atom is the cone over a series parallel graph, which in total gives a $O(n^2\change{+nm})$ time algorithm for detecting graphs in $\mathcal{G}$.

It is then also easy to see that a graph in $\mathcal{G}$ does not have too many maximal cliques.

\begin{theorem}
    If $G \in \mathcal{G}$, then the number of maximal cliques in $G$ is $O(n(n+m))$\change{.}
\end{theorem}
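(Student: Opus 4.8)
The plan is to reduce the global count to a sum over the atoms of $G$, combining the structural theorem just proved (every atom of a graph in $\mathcal{G}$ has the form $H \wedge K_m$ with $H$ series-parallel) with Tarjan's bound that $G$ admits a decomposition into at most $n-1$ atoms. So I would fix one atom decomposition $G = H_1 \oplus \cdots \oplus H_k$ with $k \le n-1$ and aim to prove
\[
\#\{\text{maximal cliques of } G\} \;\le\; \sum_{i=1}^{k} \#\{\text{maximal cliques of } H_i\}.
\]

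First I would establish a confinement lemma: every clique of $G$ lies entirely inside a single atom. This is the clique-separator argument — if a clique had a vertex on each side of a separator $K = H_i \cap H_j$, those two vertices would have to be adjacent, contradicting that deleting $K$ disconnects them. Since each atom is an \emph{induced} subgraph, a clique that is maximal in $G$ and contained in $H_i$ is automatically maximal in $H_i$. Assigning each maximal clique of $G$ to one atom containing it then gives an injection into the disjoint union of the atoms' maximal-clique sets, which yields the displayed inequality.

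Next I would peel off the coned vertices. Since the $m_i$ vertices of $K_{m_i}$ in $H_i = J_i \wedge K_{m_i}$ are adjacent to everything, they belong to every maximal clique, so the maximal cliques of $H_i$ are exactly the sets $C \cup V(K_{m_i})$ with $C$ a maximal clique of $J_i$; hence $H_i$ and $J_i$ have the same number of maximal cliques. Finally, because a series-parallel graph has no $K_4$ minor and hence no $K_4$ subgraph, every clique of $J_i$ has at most three vertices, so each maximal clique is an edge or a triangle. Bounding the edges by $m$ and the triangles of a series-parallel graph on at most $n$ vertices by $n-2$ gives $\#\{\text{maximal cliques of } J_i\} = O(n+m)$. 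Summing over the at most $n-1$ atoms produces $O(n(n+m))$, as claimed.

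The step I expect to be the main obstacle is the confinement lemma together with its bookkeeping: I must take care that cliques lying inside a shared separator may belong to several atoms, so the map to atoms need not be well-defined. Because I only need an upper bound, however, assigning each maximal clique of $G$ to a single containing atom is enough, and the possible over-counting can only help. The remaining ingredients — the coning bijection and the size-$\le 3$ clique bound for series-parallel graphs — are routine once the confinement property and the atom characterization are in hand.
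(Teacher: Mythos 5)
Your proof is correct and follows essentially the same route as the paper: decompose $G$ into at most $n-1$ atoms of the form $H \wedge K_m$ with $H$ series parallel, use subadditivity of maximal-clique counts under clique sums, strip the universal (coned) vertices, and bound the maximal cliques of a series-parallel graph by $O(n+m)$. The differences are cosmetic: you supply the confinement argument that the paper dismisses as ``clear,'' and you count triangles directly (at most $n-2$) rather than via the paper's partial-3-tree characterization; also, a maximal clique of a series-parallel atom may be a single isolated vertex rather than an edge or triangle, but allowing these changes nothing in the $O(n+m)$ bound.
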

\begin{proof}
    It is clear that if $G = H_1 \oplus H_2$, then the number of maximal cliques in $G$ is at most the sum of the number of maximal cliques in $H_1$ and $H_2$ separately. From above, we have that if $G \in \mathcal{G}$, then $G$ decomposes into the clique sum of at most $n$ graphs of the form $H \wedge K_k$, with $H$ series parallel.
    The coning operation does not change the number of maximal cliques, so it suffices to argue that a series parallel graph with at most $n$ vertices and $m$ edges, has at most $O(n+m)$ maximal cliques. This is clear from the characterization that a series parallel graph is a subgraph of a \textbf{partial-3-tree}, i.e. a  chordal graph with clique number at most 3, and it is clear that any chordal graph has at most $n$ maximal cliques. Thus, there are at most $O(n)$ cliques of size 3 in a series parallel graph, at most $m$ cliques of size $2$ (edges), and at most $O(n)$ cliques of size 1 (vertices). This implies the result.
\end{proof}
\begin{remark}
    It is likely possible, with a slightly more sophisticated accounting, that the number of maximal cliques for \change{a graph in $\mathcal{G}$} is in fact $O(n)$.
\end{remark}

As in the chordal case, it is of interest to find the smallest number of edges that can be added to a given graph, so that the resulting graph is in the class $\mathcal{G}$.  The authors are not aware of any results which indicate that this problem is asymptotically more tractable than completing to a chordal graph, though clearly fewer edges are required. It is likely that algorithms for finding small chordal completions can be modified to find small completions for graphs in our class.

\bibliographystyle{plain}
\bibliography{main}
\end{document}